\newcommand{\e}{\epsilon}
\newcommand{\indicator}{{\mathbbm 1}}
\newcommand{\bZ}{{\mathbb Z}}
\newcommand{\cG}{{\mathcal G}}
\newcommand{\cH}{\mathcal H}
\newcommand{\cR}{{\mathcal R}}
\newcommand{\cO}{\mathcal O}
\newcommand{\cP}{\mathcal P}
\newcommand{\cS}{\mathcal S}
\newcommand{\size}{{\text{\tt Size}}}
\newcounter{mycount}
\newenvironment{mylist}{\begin{list}{{\rm (\roman{mycount})}}%
{\usecounter{mycount}\itemsep -2pt \topsep 0pt \labelwidth 2cm}}{\end{list}}
\newcommand{\df}{\textbf}
\newtheorem{theorem}{Theorem}[section]
\newtheorem{lemma}[theorem]{Lemma}
\newtheorem{prop}[theorem]{Proposition}
\newtheorem{cor}[theorem]{Corollary}
\newtheorem{intro-thm}{Theorem}
\theoremstyle{definition}
\newtheorem{remark}[theorem]{Remark}
\newcounter{example}
\newcounter{open}
\newcounter{figno}
\newcounter{tableno}
\numberwithin{equation}{section}
\numberwithin{figure}{section}
\numberwithin{table}{section}
\numberwithin{example}{section}
\newcommand{\prob}[1]{\mathbb{P}\left(#1\right)}
\newcommand{\probsub}[2]{\mathbb{P}_{#2}\left(#1\right)}
\newcommand{\solve}{\text{\tt Solve}}
\newcommand{\grow}{\text{\tt Grow}}
\newcommand{\Epeople}{E_{\text{ppl}}}
\newcommand{\Epuzzle}{E_{\text{puz}}}
\newcommand{\Gppl}{G_{\text{ppl}}}
\newcommand{\Gpuz}{G_{\text{puz}}}
\newcommand{\srt}{\text{\tt short}}
\newcommand{\lng}{\text{\tt long}}
\newcommand{\Bin}{\text{\rm Binomial}}
\newcommand{\Poisson}{\text{\rm Poisson}}
\newcommand{\pcsite}{p_c^{\text{site}}}
\newcommand{\cpeople}{\text{\tt coll}}    
\newcommand{\cpuzzle}{\text{\tt link}}   
\newcommand{\final}{\text{\tt final}}
\newcommand{\abs}[1]{\left|#1\right|}
\newcommand{\ER}{Erd\H{o}s-R\'{e}nyi }
\begin{document}
\pagestyle{empty}

\null

\begin{center}\huge
{\bf  Nucleation scaling in jigsaw percolation}
\end{center}

\begin{center}

{\sc Janko Gravner}\\
{\rm Mathematics Department}\\
{\rm University of California}\\
{\rm Davis, CA 95616, USA}\\
{\rm \tt gravner{@}math.ucdavis.edu}
\end{center}
\begin{center}

{\sc David Sivakoff}\\
{\rm Department of Statistics}\\
{\rm Department of Mathematics}\\
{\rm Ohio State University}\\
{\rm Columbus, OH 43210, USA}\\
{\rm \tt dsivakoff{@}stat.osu.edu}
\end{center}

\begin{center}
{\today}
\end{center}

\vspace{0.3cm}

\noindent{\bf Abstract.} Jigsaw percolation is a nonlocal process that iteratively merges
connected clusters in a deterministic ``puzzle graph'' by using connectivity 
properties of a random ``people graph'' on the same set of vertices. 
We presume the \ER people graph with edge probability $p$
and investigate the probability that the puzzle is solved, that is, that the 
process eventually produces a single cluster. 
In some generality, for puzzle graphs with $N$ vertices of degrees about $D$
(in the appropriate sense), 
this probability is close to 1 or small depending on whether $pD\log N$ 
is large or small. The one dimensional ring and two dimensional torus puzzles
are studied in more detail and in many cases the exact scaling of the 
critical probability is obtained. 
The paper 
strengthens several results of Brummitt, Chatterjee, Dey, and Sivakoff
who introduced this model.

\vskip0.3cm

\noindent 2010 {\it Mathematics Subject Classification\/}: 60K35

\vskip0.3cm

\noindent {\it Key words and phrases\/}:  jigsaw percolation, 
nucleation, random graph

\newpage

\addtocounter{page}{-1}

\pagestyle{headings}
\thispagestyle{empty}

\vfill\eject

\section{Introduction}

The two-dimensional discrete torus is the graph with 
vertex set $V=\bZ_n^2=\{0,1\dots,n-1\}^2$ with 
periodic boundary conditions and edges between nearest neighbors.  We imagine 
$V$ as pieces of a puzzle and denote this graph, an instance of a puzzle graph, by $\Gpuz$.
Suppose we have a partially solved puzzle, that 
is, a collection of $\Gpuz$-connected subsets of $V$ 
(also known as {\it clusters\/}) that partition $V$. 
Then we get closer to the complete solution by merging together one 
or more of these clusters. If we have two clusters whose 
union is a connected set in $\Gpuz$, how does the information that they fit 
together, and hence can be merged, get transmitted? The idea introduced 
in \cite{BCDS} is that the knowledge about each piece is held
by a separate person and that the $N=\abs{V}$ people are connected by 
{\it collaboration\/} edges into the people graph $\Gppl$.   This model was proposed as an idealized mechanism by which
people with incomplete knowledge could collaboratively combine their
partial solutions to solve a puzzle.  As in \cite{BCDS}, we assume 
that people connections are sparse and assigned at random. 

\begin{figure}[ht]
\hskip0.35cm\includegraphics[trim=0cm 0cm 0cm 0cm, clip, width=5cm]{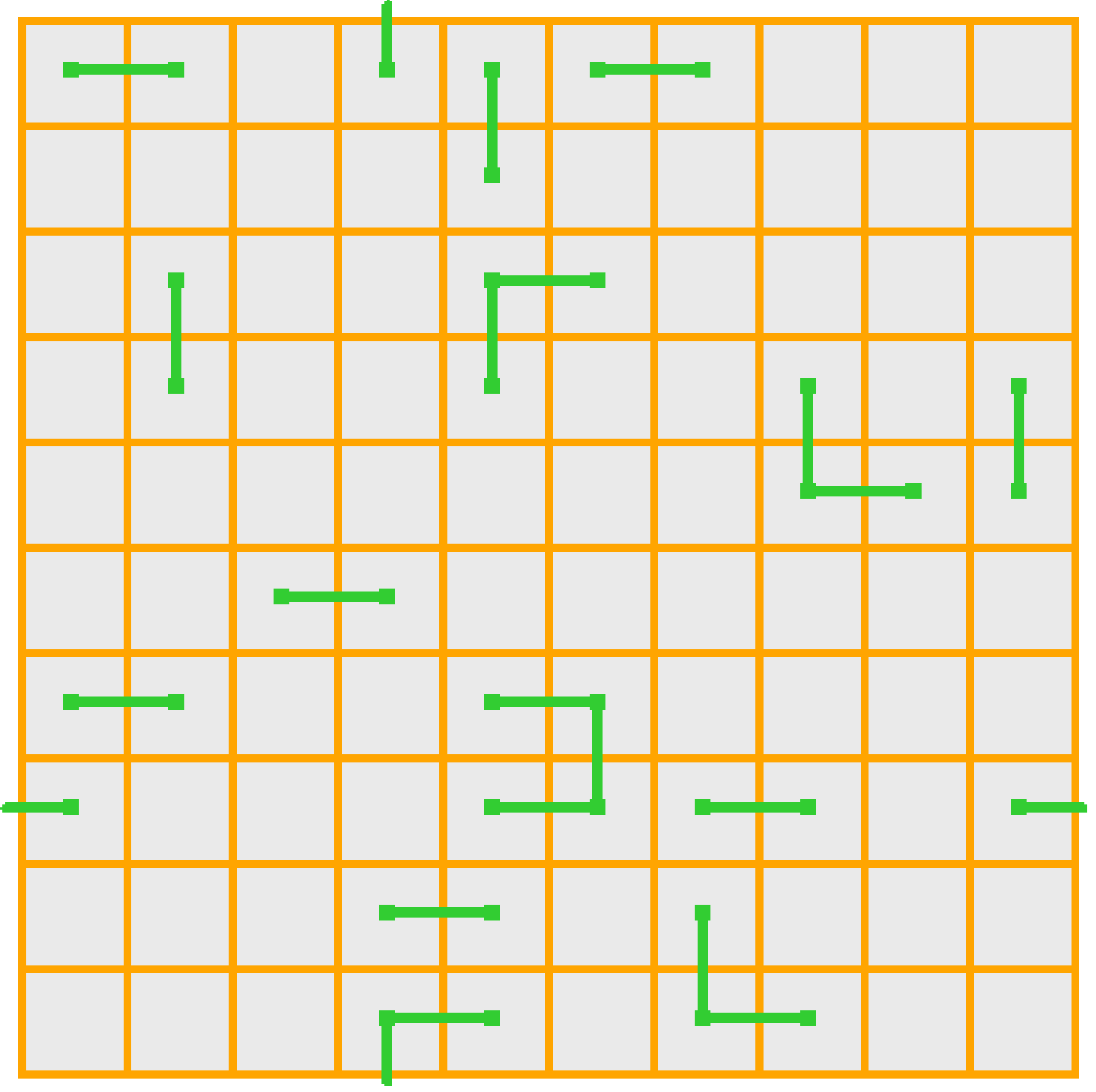}
\includegraphics[trim=0cm 0cm 0cm 0cm, clip, width=5cm]{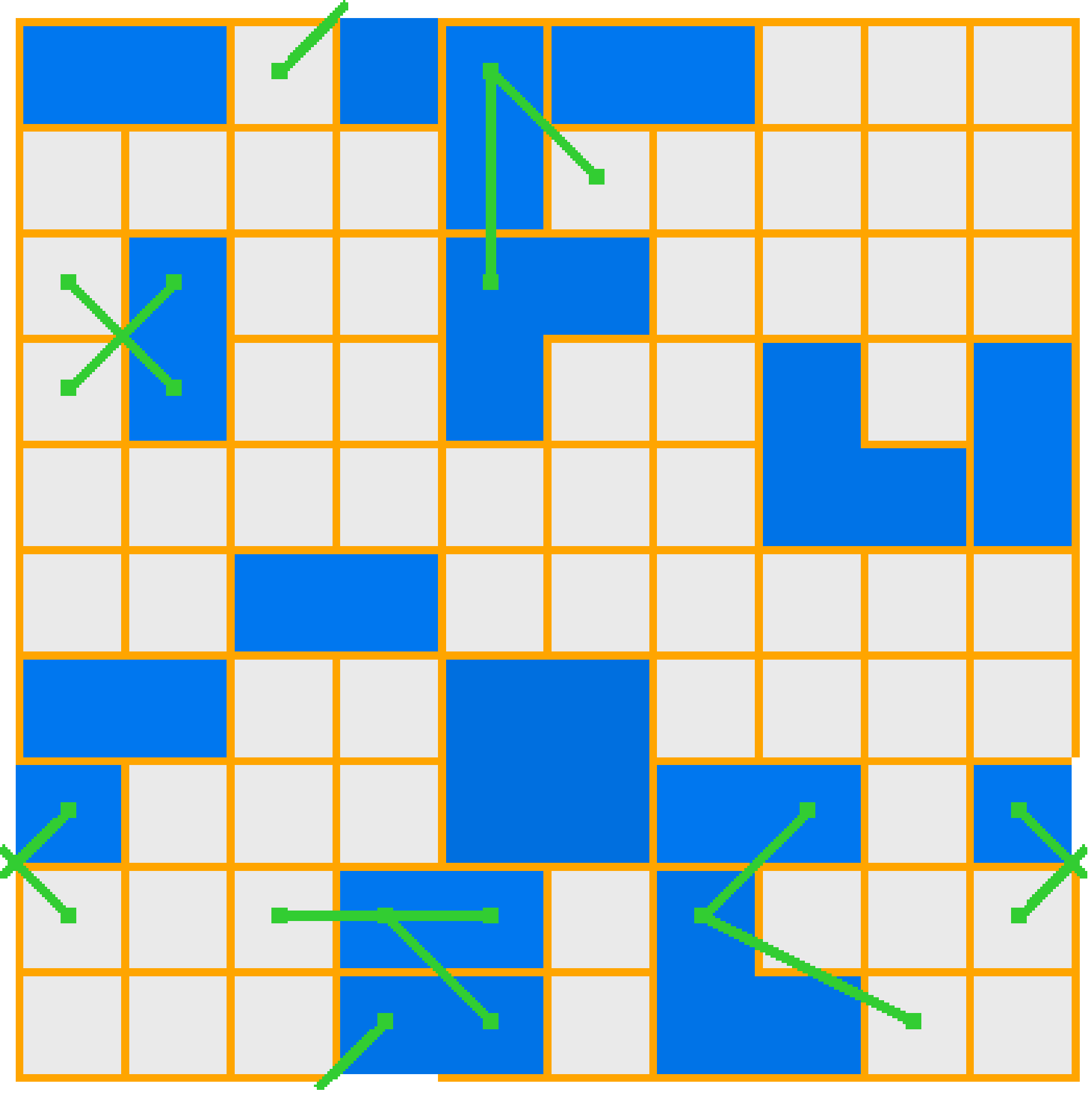}
\includegraphics[trim=0cm 0cm 0cm 0cm, clip, width=5cm]{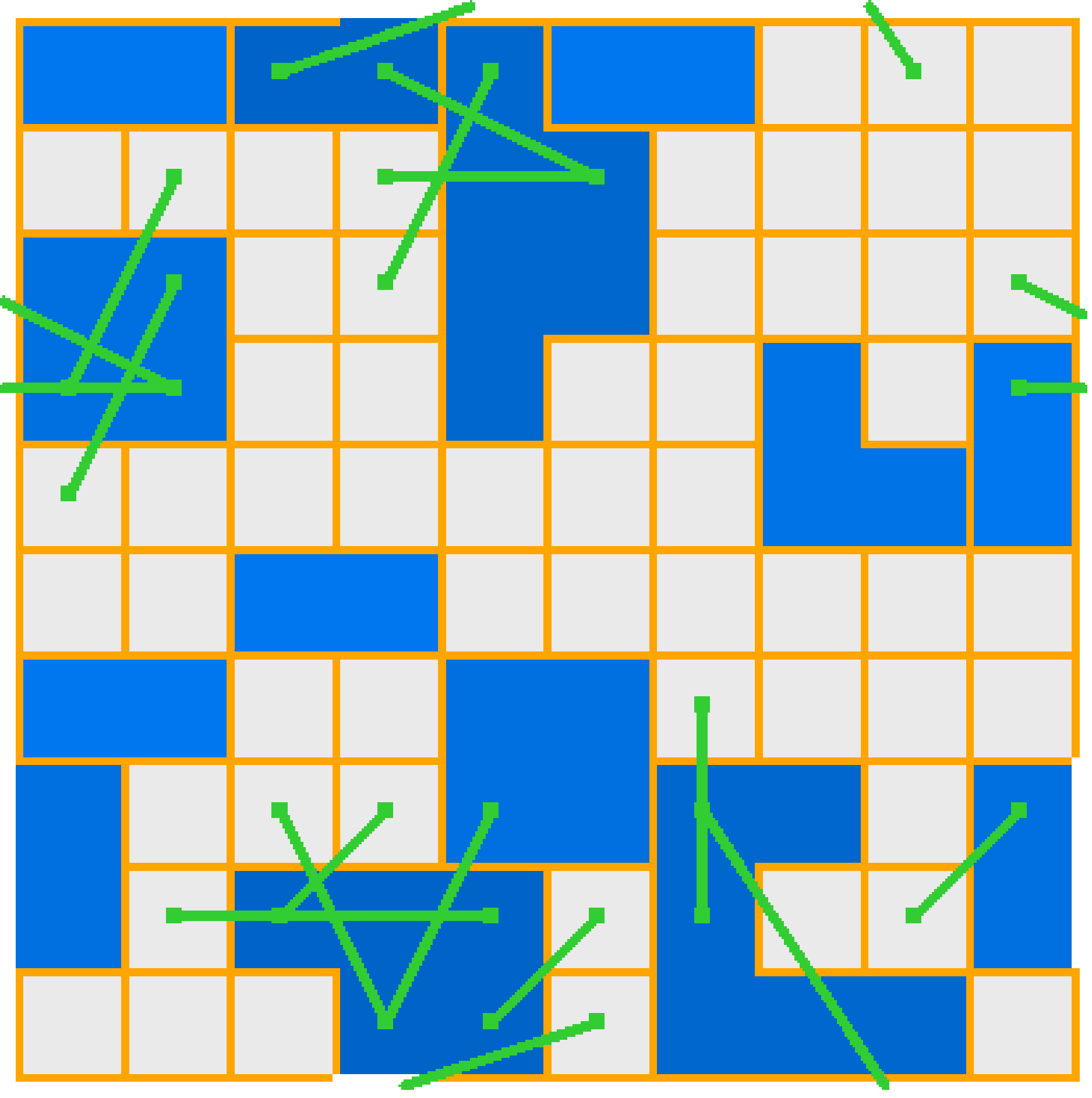}\hfill\newline
\null\hskip0.35cm\includegraphics[trim=0cm 0cm 0cm 0cm, clip, width=5cm]{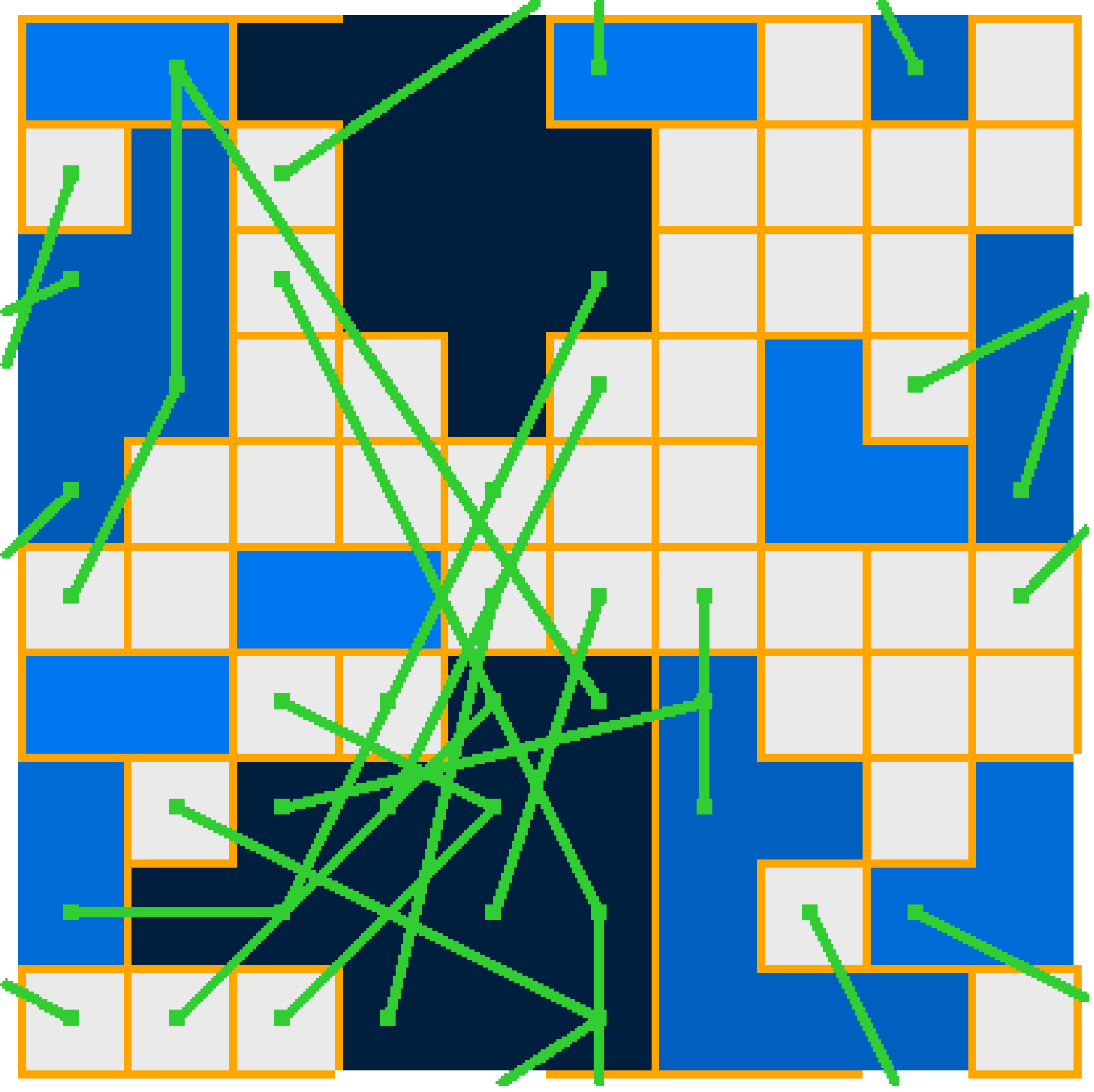}
\includegraphics[trim=0cm 0cm 0cm 0cm, clip, width=5cm]{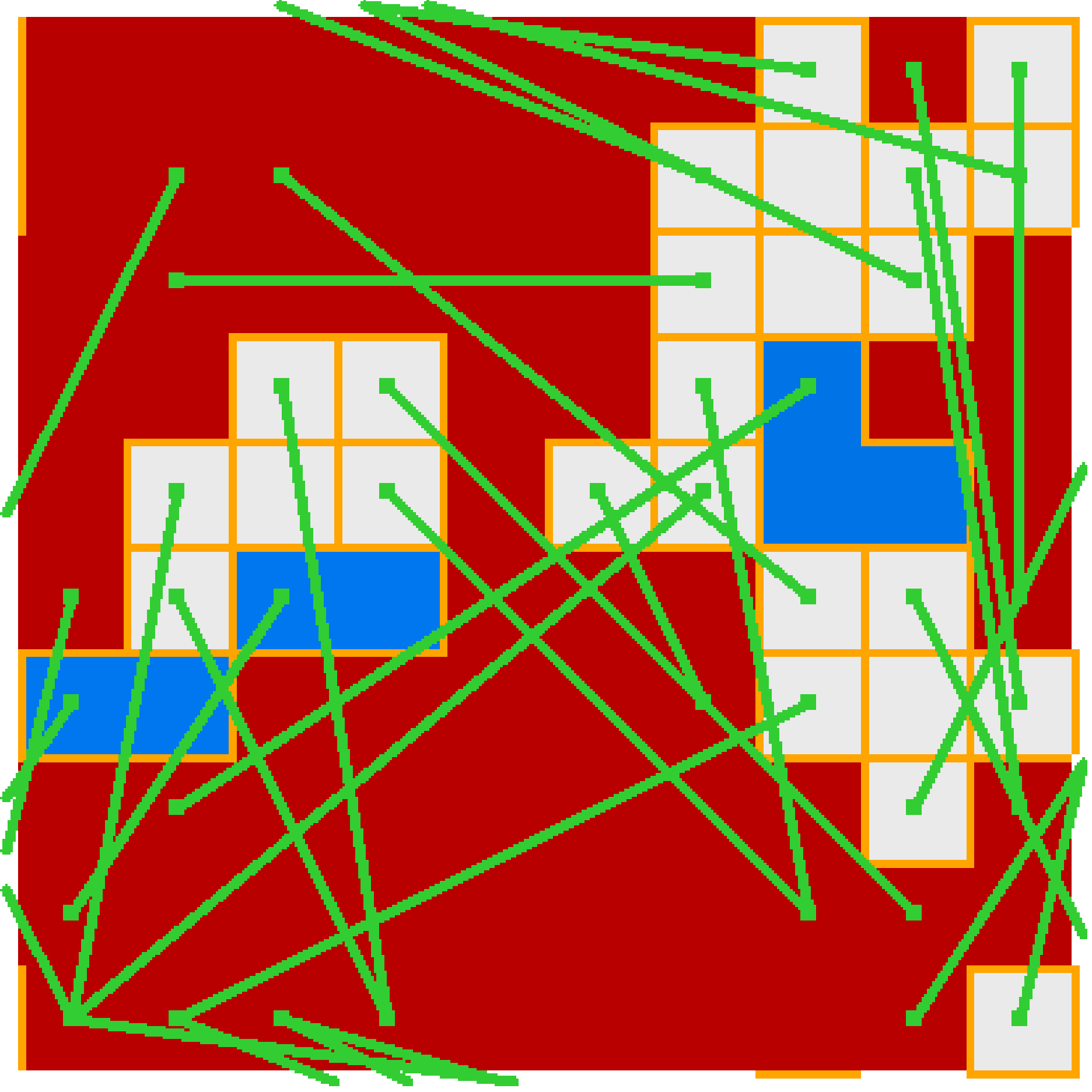}
\includegraphics[trim=0cm 0cm 0cm 0cm, clip, width=5cm]{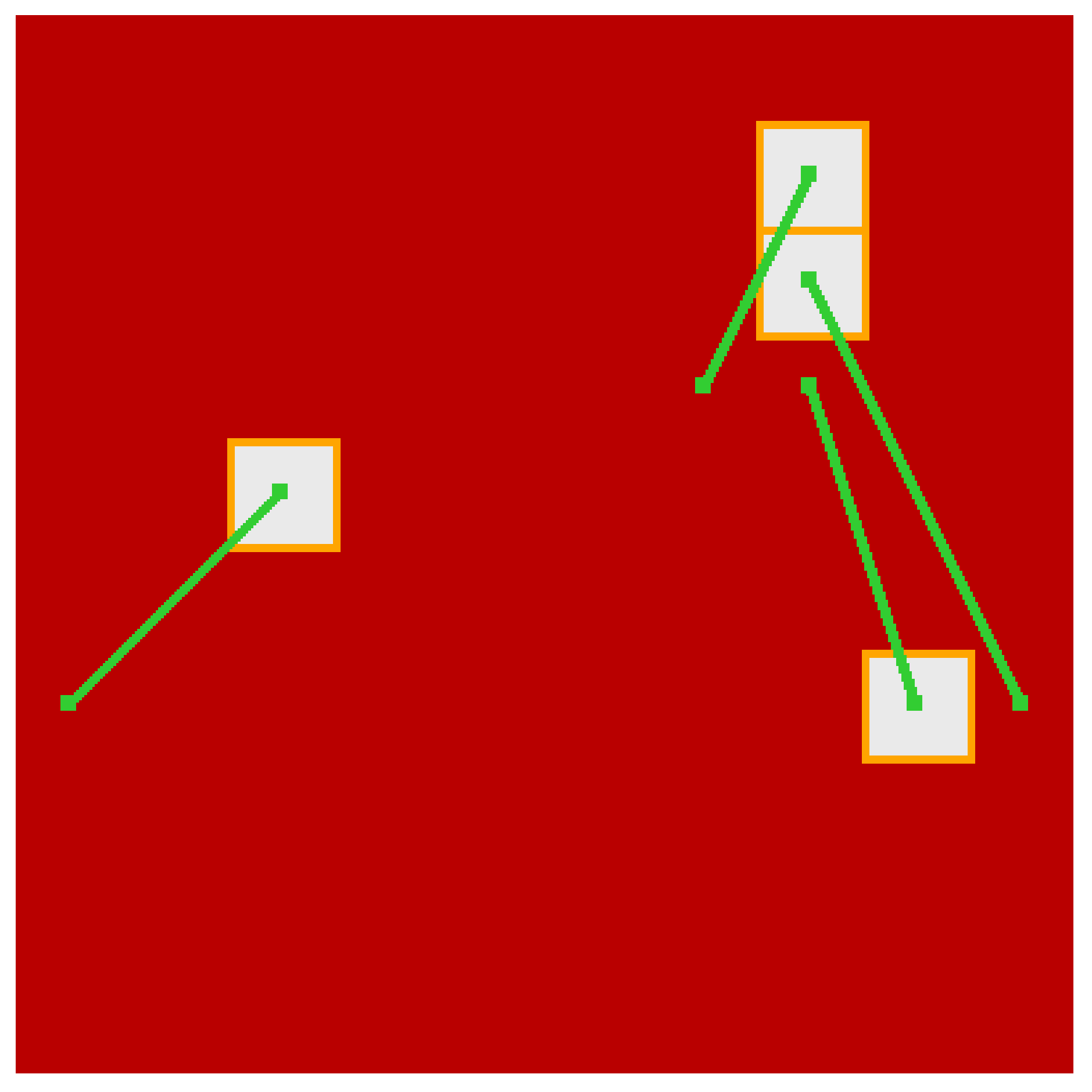}\hfill
\caption{AE jigsaw percolation on $10\times 10$ torus (i.e., square with periodic boundary), 
with $p=0.11$, at times $t=0,\ldots, 5$. 
The clusters are outlined in orange and colored grey-blue-dark blue-red according to their sizes. The edges of $\cG_t$, that decide 
which cluster are merged in the next time step, are depicted in green. The edges  
connect vertices with the $\Gppl$-connection found by the algorithm. Clearly, all vertices 
are in one cluster for the first time at $t=6$.}
\label{AE-figure-small} 
\end{figure}

Our general setting is a sequence of graph pairs $(\Gpuz, \Gppl)$, 
on a common vertex set $V$ whose size $N$  increases with an integer parameter $n$. The dependence on $n$ or $N$ is typically suppressed
in our notation; $N$ will always mean the number of vertices in the graph, and for particular
examples we choose the common parametrization (e.g., the  
two dimensional torus graph $\bZ_n^2$ has $N=n^2$ vertices), while we formulate 
our statements about general graphs in terms of dependence on $N$ rather than $n$.  
The \df{puzzle graph} $\Gpuz=(V, \Epuzzle)$
is (for every $N$) a connected deterministic graph,  while 
we assume throughout that the {\it random\/} \df{people graph}  $\Gppl=(V,\Epeople)$
is an \ER graph on $V$ with  a small edge probability $p$ that also depends on $N$. 

The models we consider retain the general flavor of \cite{BCDS}, with a new ingredient:
how easy it is to discover that a puzzle piece fits to a cluster depends on the number of 
connections of each type between the piece and the cluster. 
A simple implementation of this principle 
leads to a three-parameter model that 
we now introduce.  
 
We say that vertices $v_1,v_2\in V$ are \df{doubly connected} if they are connected in 
both graphs: $\{v_1,v_2\}\in \Epeople\cap\Epuzzle$. For a fixed $v\in V$ and a set $S\subset V$, 
we let $\cpeople(v,S)$ (resp.~$\cpuzzle(v,S)$) be the number of $\Gppl$-neighbors
(resp.~$\Gpuz$-neighbors) of $v$ in $S$, not including $v$.  

We define \df{jigsaw percolation} as a discrete-time dynamics with three threshold parameters: 
\df{verification threshold}  $\sigma\ge 1$, 
\df{link threshold} $\tau\ge 1$, and \df{exemption threshold} $\theta\ge \tau$. 
At each time $t=0,1,2,\dots$, the state of the dynamics is a partition 
$\cP^t=\{W_i^t:i=1,\dots, I_t\}$ of the vertex set, with $\cP^0$ a given partition.  
Given $\cP^t$, $\cP^{t+1}$ is obtained as follows. Construct the graph $\cG_t$ with vertex set $\cP^t$ and unoriented edges
between any $W_i^t$ and $W_j^t$ such that at least one of (J1)--(J3) is satisfied:
\begin{itemize}
\item[(J1)] there are doubly connected vertices $v_1\in W_i^t$ and $v_2\in W_j^t$; 
\item[(J2)] there is a vertex $v_1\in W_i^t$ with $\cpuzzle(v_1, W_j^t)\ge\theta$;
\item[(J3)] there is a vertex $v_1\in W_i^t$ with $\cpeople(v_1, W_j^t)\ge\sigma$ 
and $\cpuzzle(v_1, W_j^t)\ge\tau$.
\end{itemize}
Then, 
\begin{itemize}
\item[(J4)] to obtain $\cP^{t+1}$, merge all sets in $\cP^{t}$ that belong to the 
same connected component of $\cG_t$.  
\end{itemize}

The parameter $\theta$ is akin to the threshold in bootstrap percolation~\cite{AL}, 
in that a vertex will merge with a larger cluster as soon as it has $\theta$ 
$\Gpuz$-neighbors in that cluster.  In the example of $\bZ_n^2$, when $\theta=2$, 
this amounts to filling in puzzle pieces that ``obviously'' fit with the partially 
solved puzzle because they fill in a missing corner.  
Of course, due to the nonlocal nature, other sites may be added to the cluster 
along with the missing corner (namely, those sites that have previously merged with the corner). 
(Another contrast with bootstrap percolation is that we get an essentially equivalent model if we require that the two neighbors, which a vertex needs in a neighboring cluster to join, are diagonally adjacent.) The parameters $\tau$ and $\sigma$ control the levels of redundancy required in the puzzle and people graphs, respectively, for two clusters to merge. 
We say that the event $\solve$ happens if, when $\cP^0$ consists of all singletons, 
the partition eventually gathers all vertices into one set, that is 
$\solve=\{\cP_t=\{V\} \text{ for some t}\}$.

Observe that, for every $t$, 
sets in $\cP^t$ are $\Gpuz$-connected; provided that $\theta=\infty$, they 
are also  $\Gppl$-connected. The model with parameters $\sigma=\tau=1$ and $\theta=\infty$ 
(or equivalently, $\theta$ 
exceeds the maximum degree of $\Gpuz$)
was introduced in \cite{BCDS} as the \df{Adjacent-Edge (AE) jigsaw percolation} 
and we will keep this name. The paper 
\cite{BCDS} mostly analyzes the \df{basic jigsaw percolation} 
in which there is an edge between $W_i^t$ and $W_j^t$
in $\cG^t$ when 
\begin{itemize}
\item[(J5)]there are vertices $v_1,v_1'\in W_i^t$ and $v_2,v_2'\in W_j^t$,
such that $\{v_1,v_2\}\in \Epeople$ and $\{v_1',v_2'\}\in \Epuzzle$. 
\end{itemize}
All our results that apply to AE dynamics 
(Theorems~\ref{intro-lb-general},~\ref{intro-ub-list},~\ref{intro-2d-bounds}, and results in Sections 3 and 4, 
as well as $\sigma=1$ instances of Theorems~\ref{intro-ring-thm} and~\ref{intro-final-time-thm})  
hold for the basic version with unchanged proofs. In fact, as noted in \cite{BCDS}, 
it is an interesting open problem to devise a class of puzzle graphs with significant difference 
in behaviors between the AE and basic versions.  

A small example of solving the torus puzzle 
in the AE case is depicted in Figure~\ref{AE-figure-small} and a larger 
one in Figure~\ref{big-figure}; see Section 10
for a description of algorithms we employ.
The general message of simulations is that $p$ should be large enough so 
that nucleation centers (as in Figure~\ref{big-figure}) appear. In this 
sense, jigsaw percolation is similar to {\it bootstrap percolation\/} \cite{AL, Hol}
and {\it Greenberg-Hastings model\/} \cite{FGG}, in spite 
of the fact that it is non-local. Indeed, to our knowledge the present paper is the first to establish 
scaling of critical probabilities 
and sharp phase transitions using nucleation techniques in a non-local setting.  
 
Typical for nucleation-and-growth models is an {\it order parameter\/}: a function 
of $p$ and $N$ that determines (for large $N$) whether $\probsub{\solve}{p}$ is large or small; 
often there is sharp transition from probability close to $0$ to close to $1$. For example, 
we will prove that for the $\bZ^2_n$ case of Figure~\ref{AE-figure-small}, 
the order parameter is $p\log n$, but a sharp transition in this case remains an open problem.
To make this concept precise, we define 
the \df{critical probability} $p_c=p_c(N)$  by
$$
\probsub{\solve}{p_c}=\frac 12.
$$
We say that there is \df{sharp transition} if 
$\probsub{\solve}{(1-\epsilon)p_c}\to 0$ and $\probsub{\solve}{(1+\epsilon)p_c}\to 1$
as $N\to\infty$, for any $\epsilon>0$.
It is expected that under general conditions there is sharp transition \cite{FK}.
We will prove this for some examples 
in which the asymptotic behavior of $p_c$ can be determined exactly. The general 
results in \cite{FK} (and subsequent work) cannot be used as they depend on 
symmetry of random bits. This in our case clearly fails as, for example, 
$\Gppl$-edges between $\Gpuz$-neighbors do not play exactly the same role 
as  other $\Gppl$-edges. 
We refer to \cite{BCDS} for much more background and intuition. 
We now state our main results, which are divided into three categories
in subsections below. 


\subsection{Results for general puzzle graphs}

Notably, the asymptotic order of $p_c$ can be 
determined in some generality.  
In this subsection, we assume 
the puzzle graph $\Gpuz$ has maximum degree $D$, which may depend on $N$.
The proof of the following theorem is given in Section 3.

\begin{intro-thm}\label{intro-lb-general}
Assume AE dynamics and that
$p =\mu/(D\log N)$ for a constant $\mu\le 1/30$. Then $\probsub{\solve}{p} \to 0$.
\end{intro-thm}
 
Theorem 2 from \cite{BCDS} demonstrates that for the AE dynamics
$\probsub{\solve}{p} \to 1$ if 
$p\ge C/\log N$, for an absolute constant $C$.  The next theorem provides
a more precise result for some well-known vertex-transitive graphs: 
together with Theorem~\ref{intro-lb-general} it implies that 
$p_c$ scales as $1/(D\log N)$ in these cases. On the other hand, 
in Section 4 we will exhibit a vertex-transitive example for which
this scaling does not hold. Section 4 also contains a general method
used to prove results such as Theorem~\ref{intro-ub-list}.
 
\begin{intro-thm}\label{intro-ub-list} 
Assume AE dynamics and $p =\mu/(D\log N)$ for a constant $\mu$. For each of the 
following vertex-transitive graphs, there exists a universal constant $C$ such that $\mu\ge C$ implies 
$\probsub{\solve}{p} \to 1$: $d$-dimensional torus $\bZ_n^d$ with lattice edges;  
range-$r$ 
two-dimensional graph on $\bZ_n^2$ with neighborhood of $x$ given by $\{y: ||x-y||_\infty\le r\}$; 
hypercube with vertex set $\{0,1\}^n$;
and $d$-dimensional Hamming graph with vertex set $\bZ_n^d$. 
\end{intro-thm}

An important question we attempt to answer in various contexts 
is the following: how costly is it to require a large number of 
verifications in the people graph? 
Our next result, proved in Section 6, clarifies the general answer 
for the most natural setting whereby we keep the AE parameters $\tau=1$, $\theta=\infty$,  
but assume $\sigma$ is large. It turns out that the number 
of people connections required to solve the puzzle then increases as the 
square of $\sigma$. 

\begin{intro-thm}\label{intro-large-sigma-thm} Assume that $\tau=1$, $\theta=\infty$, 
and $\sigma$ is arbitrary. Assume also that the degree $D$ is bounded by a constant
independent of $N$. Then, 
for $N\ge N_0(\sigma)$, $p_c$ is between two constants times $\sigma^2/\log N$. 
\end{intro-thm}

\subsection{Results for the ring graph}

We next turn to more precise results for low-dimensional puzzle lattices.
As pointed out in \cite{BCDS}, the one-dimensional {\it ring\/} puzzle 
with $V = \bZ_n$ is already of interest. 
By exploiting remarkable similarity to two-dimensional 
bootstrap percolation (see Section 5 for details), we prove Conjecture 2 
of \cite{BCDS} and shed light on Open Problem 1 in the same paper. 
For $\sigma\ge 1$, we let
$$
g_\sigma(x)=-\log \prob{{\mathrm{Poisson}}(x)\ge \sigma} 
$$
and 
$$
\lambda_\sigma=\int_0^\infty g_\sigma(x)\,dx.
$$

\begin{intro-thm}\label{intro-ring-thm} Assume the ring puzzle graph 
with $\tau=1$, $\theta=\infty$, and arbitrary $\sigma\ge 1$.
As $n\to\infty$, 
$$
p_c\log n\to\lambda_\sigma,
$$
with sharp transition.
\end{intro-thm}

\begin{intro-thm}\label{intro-final-time-thm} In the context of Theorem~\ref{intro-ring-thm}, 
assume $p\sim\lambda/\log n$, for 
some $\lambda>0$. Let $T_f=\min\{t:\cP_{t+1}=\cP_t\}$ be the time when the jigsaw dynamics stops. 
Then, in probability, 
$$
\begin{cases}
\limsup_{n\to\infty} \frac{T_f}{\log n}<\infty &\text{if $\lambda<\lambda_\sigma$}\\
\lim_{n\to\infty}\frac{\log T_f}{\log n}=\frac{\lambda_\sigma}{\lambda} &\text{if $\lambda>\lambda_\sigma$}
\end{cases}
$$
\end{intro-thm}
Roughly then, for large $n$, $(\log n)^{-1}\log T_f$ as a function of $p\log n$ 
vanishes on $[0,\lambda_\sigma)$, has a discontinuous jump to $1$ at $\lambda_\sigma$ 
and then decreases to 0 as the inverse first power. For a comparison with 
simulations, see Figure~5b in \cite{BCDS}.

\subsection{Results for the two dimensional torus}

The bulk of the paper is devoted to the case of two-dimensional lattice torus $\Gpuz$
with $V=\bZ_n^2$, which will be assumed in Theorems~\ref{intro-2d-bounds},~\ref{intro-theta2-thm}, 
and \ref{intro-tau2-theorem}. We begin with the AE dynamics, for which
Theorems~\ref{intro-lb-general} and~\ref{intro-ub-list} imply that the 
order parameter is $p\log n$, as previously announced. 
While we are unable to prove sharp transition, we will at least give upper and 
lower bounds within a factor of $10$. We suspect the lower bound is the 
cruder of the two; see Section 7 for a proof. 

\begin{intro-thm}
\label{intro-2d-bounds} Assume the AE dynamics.
For a large enough $n$, 
$$\frac{0.0388}{\log n}< p_c < \frac {0.303}{\log n}.
$$
\end{intro-thm} 

The two-dimensional torus is the simplest instance for which we can investigate 
the dependence on two puzzle graph thresholds $\theta$ and $\tau$.
It it is not hard to see that for this $\Gpuz$ there are essentially only three interesting cases: 
$\tau=1, \theta=\infty$; $\tau=1$, $\theta=2$; and $\theta\ge \tau=2$. The 
first case is covered by Theorems~\ref{intro-2d-bounds} and~\ref{intro-large-sigma-thm}, 
while the other two are addressed in our next two results. 
Many open problems 
remain for other puzzle graphs; see the Open problems section at the end of the paper.

Our most substantial result is about the parameter choice $\tau=1$ and $\theta=2$.
In this case, corners are fit automatically, but non-corner 
pieces require $\sigma\ge 1$ verifications. By contrast to Theorem~\ref{intro-large-sigma-thm}, 
and perhaps surprisingly, $\sigma$ now affects the {\it power\/} of $\log n$ 
in the critical scaling. Change of the order parameter without a change 
in the underlying geometry appears to be a novel phenomenon. 
We let $g(x)=g_1(x)=-\log(1-e^{-x})$ and 
$$
\nu_\sigma=\int_0^\infty g\left(\frac{x^{2\sigma+1}}{\sigma!}\right)\, dx
=\frac{(\sigma!)^{1/(2\sigma+1)}\Gamma(\frac 1{2\sigma+1})\zeta(\frac {2\sigma+2}{2\sigma+1})}
{2\sigma+1}.
$$
For example, when $\sigma=1$, 
$\nu_1=\frac{\Gamma(\frac 13)\zeta(\frac 43)}{3}\approx 3.216$ and the next theorem 
implies that 
transition occurs at $p(\log n)^3=\nu_1^3\approx 33.25$. See Figure~\ref{big-figure}
for an illustration and Section 8 for a proof.
 
\begin{intro-thm}\label{intro-theta2-thm}  Assume $\tau=1$, $\theta=2$ and $\sigma\ge 1$. 
As $n\to \infty$, 
$$
p_c(\log n)^{2+\frac 1\sigma}\to \nu_\sigma^{2+\frac 1\sigma},
$$
with sharp transition. 
\end{intro-thm}

The final interesting case has $\tau=2$, and arbitrary
$\theta$ and $\sigma$. 
The asymptotic scaling of the critical probability is always 
$1/\log n$, but the only instance we are able to identify the constant factor
is when $\theta=2$ and the dynamics does not depend on $\sigma$. We give 
a proof in Section 9 and, 
again, Figure~\ref{big-figure}
provides an illustration.

\begin{intro-thm}\label{intro-tau2-theorem} Assume $\theta\ge \tau=2$, 
and $\sigma\ge 1$. Then,
\begin{equation}\label{intro-tau2-general}
\frac{\pi^2}{6}\le \liminf_{n\to\infty} p_c\log n\le 
\limsup_{n\to\infty} p_c\log n\le \frac {\pi^2}{6}+\frac 
12\int_0^\infty g_\sigma(x)\, dx.
\end{equation}
If $\tau=\theta=2$, then
\begin{equation}\label{intro-tau2-theta2}
p_c\sim\frac {\pi^2}{6}\cdot \frac{1}{\log n}
\end{equation}
as $n\to\infty$, with sharp transition.
\end{intro-thm}

The lower bound in (\ref{intro-tau2-general}) can be improved for large $\sigma$, 
as Theorem~\ref{intro-large-sigma-thm} implies that it can be replaced by a bound on the order 
$\sigma^2$. We do not know whether the upper bound in (\ref{intro-tau2-general}) (which is also on the
order $\sigma^2$) can be improved.

\begin{figure}
\centering\includegraphics[trim=0cm 0cm 0cm 0cm, clip, width=7.8cm]{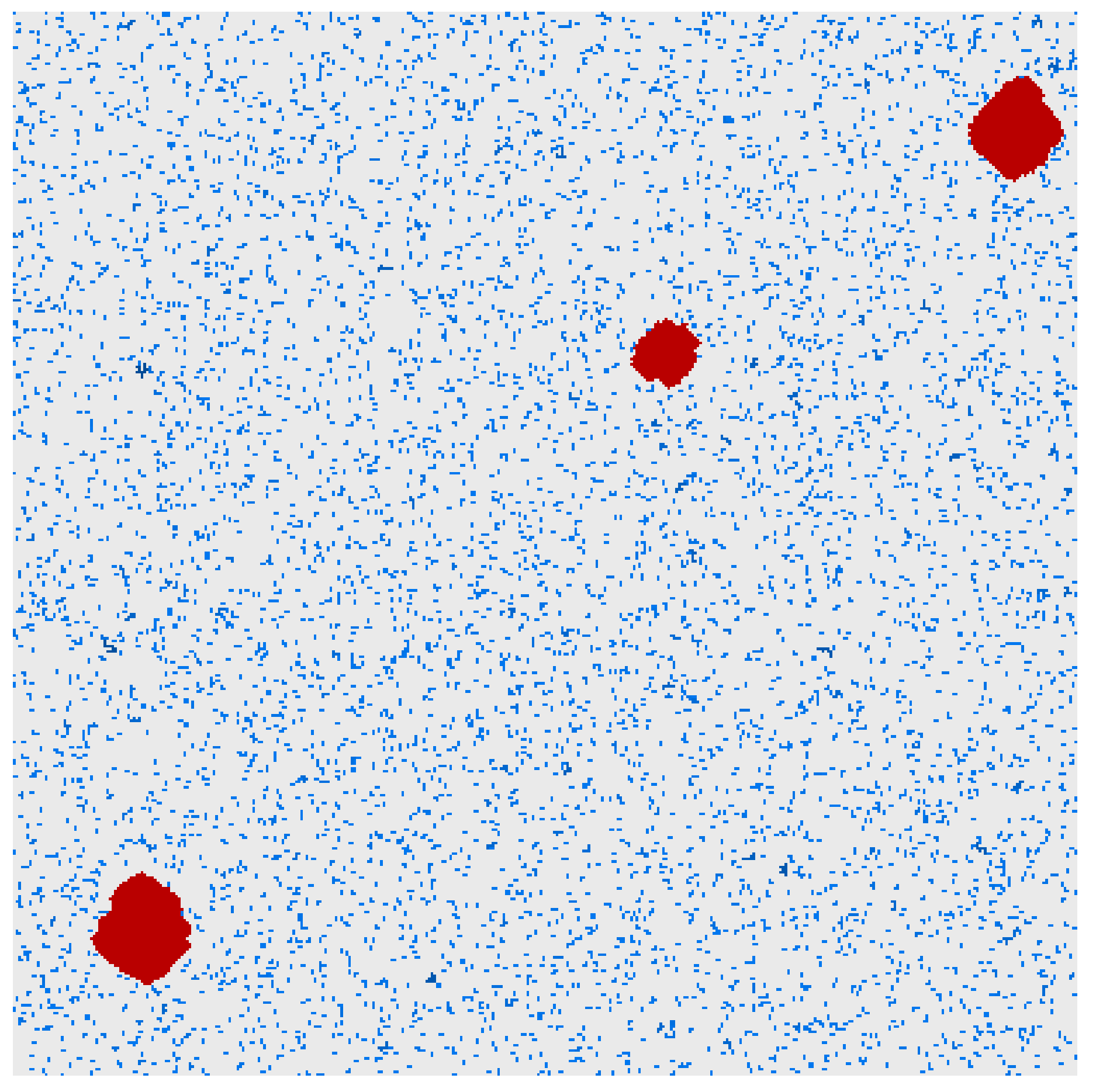}
\includegraphics[trim=0cm 0cm 0cm 0cm, clip, width=7.8cm]{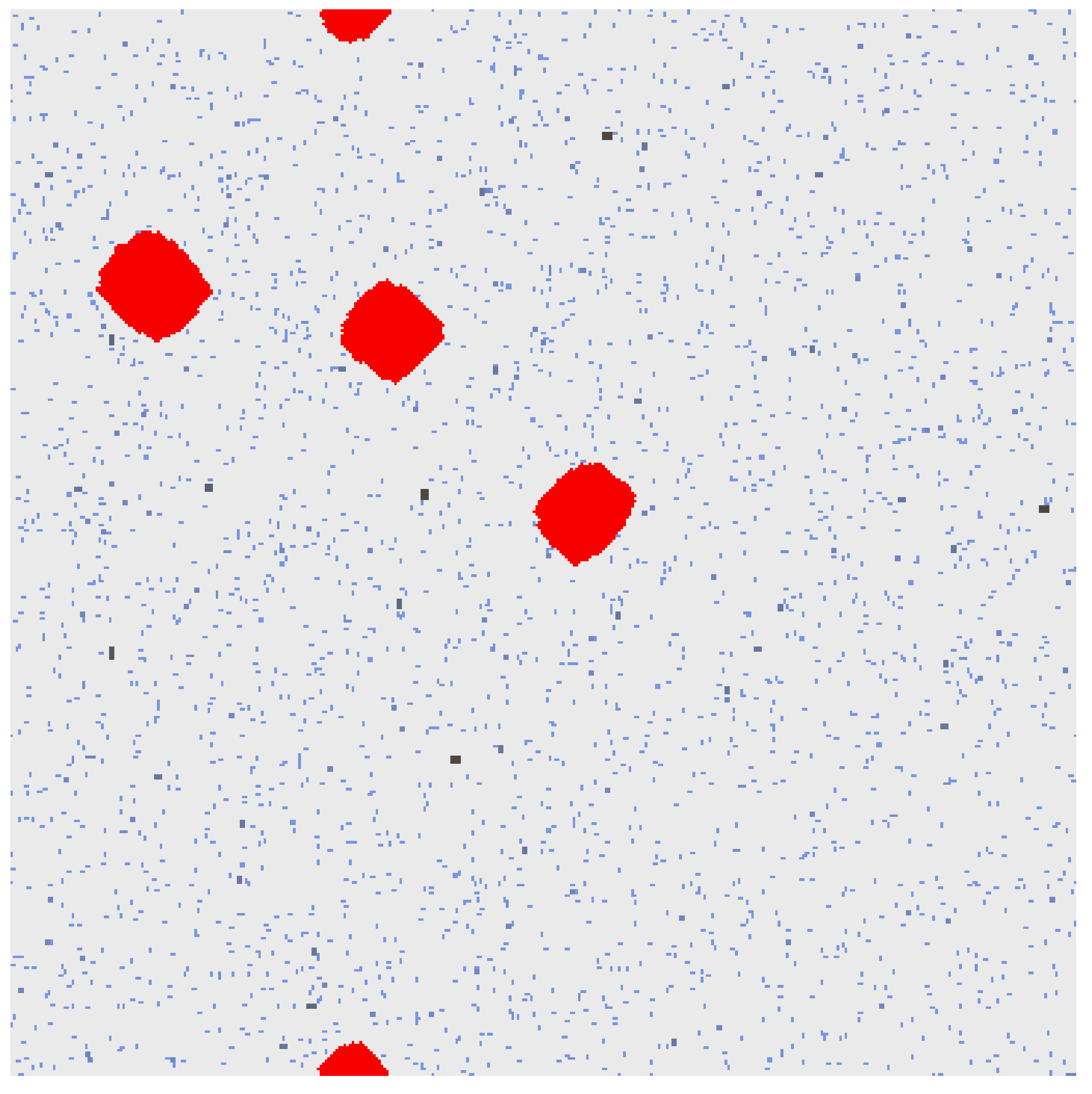}
\includegraphics[trim=0cm 0cm 0cm 0cm, clip, width=7.8cm]{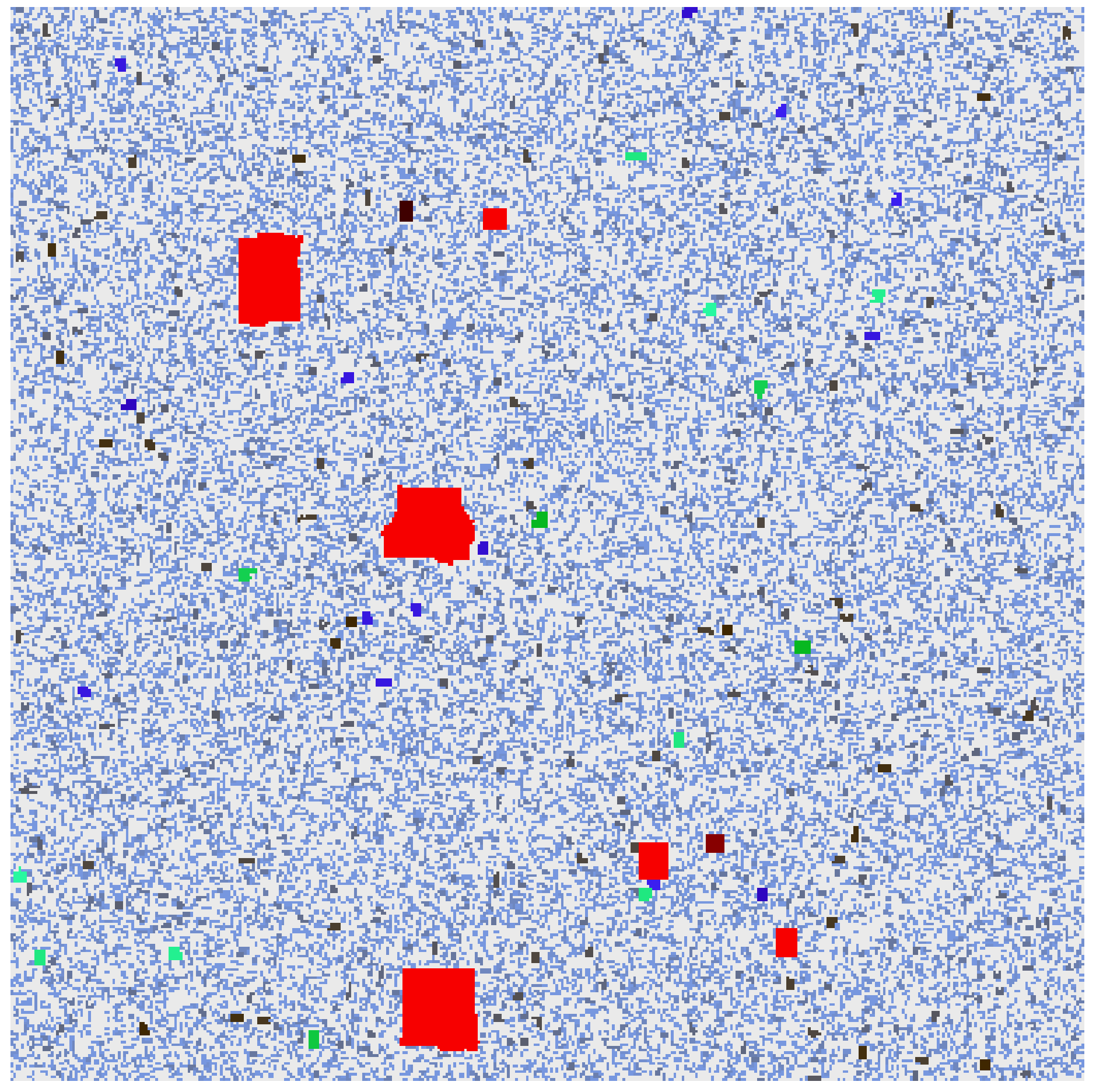}
\caption{Jigsaw percolation on $400\times 400$ torus. Top left: AE version ($\sigma=\tau=1$, $\theta=\infty$) 
with $p=0.021$, at time $t=31$. Top right: $\sigma=\tau=1$, $\theta=2$ with $p=0.009$, at $t=31$.
Bottom: $\sigma=1$, $\tau=2$, $\theta=\infty$, with $p=0.11$ at $t=91$. 
These pictures illustrate the nucleation and metastability of the dynamics: 
most of the space is divided into small clusters (color-coded by size with grey singletons). 
A few favorable local configurations generate large (red) clusters that grow unstoppably 
and result in $\solve$.}
\label{big-figure} 
\end{figure}

We remark that for random puzzle graphs, a related result was proven in \cite{Slivken}, where
it is assumed that both the people and puzzle graphs are \ER with probabilities $p$ and $p_{\text{puz}}$, 
respectively, with $p\wedge p_{\text{puz}} \geq (1+\epsilon) \log N / N$ for some $\epsilon>0$. 
Then it is shown that the probability 
of solving the puzzle is close to zero if $p\cdot p_{\text{puz}} \leq c/(N\log N)$ and is close to one if $ p\cdot p_{\text{puz}} \geq C\log\log N /(N \log N)$, for some constants $c,C>0$. 

The rest of the paper is organized as follows.
We begin with Section 2 that contains more formal definitions and 
some useful observations. Sections 3--9 are devoted to proofs to 
the above theorems. Section 10 contains a discussion of computational aspects of simulation 
algorithms, and the paper is concluded by a list of intriguing open problems.

\section{Preliminaries}

We say that a given partition $\cP$ is \df{inert} if jigsaw percolation 
started at $\cP^0=\cP$ results in $\cG^0$ with no edges and thus $\cP^1=\cP$. Clearly, 
for any $\cP^0$, $\cP^t$ is inert for some $t$; we call this the 
\df{final} partition started from $\cP_0$.  Note that the final partition also 
depends on $\Gppl$.

When not explicitly stated otherwise, 
the initial partition $\cP^0$ will consist of all singletons and in this case we 
denote the final partition as $\final$.  
 
For a given set $S\subset V$, denote its \df{outside boundary} by 
$\partial_o(S)=\{v\notin S: \{v,v'\}\in \Epuzzle\text{ for some }v'\in S\}$. 

\begin{prop} \label{inert-int} Assume that $\{\cP_j\}$ is a finite collection of inert partitions
of $V$, and let the partition $\cP$ consist of all non-empty intersections $\cap_j W_{j}$ for arbitrary $W_{j}\in \cP_j$.
Then $\cP$ is also inert. 
\end{prop}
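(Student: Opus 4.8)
The plan is to isolate a simple monotonicity feature of the edge rules (J1)--(J3) and then apply it blockwise. Fix $V$, $\Gppl$, $\Gpuz$ and the thresholds $\sigma,\tau,\theta$. For two disjoint sets $A,B\subseteq V$, whether the auxiliary graph (i.e.\ the graph $\cG_0$ of the dynamics started at the partition in question) places an edge between the blocks $A$ and $B$ is a nondecreasing function of the pair $(A,B)$ with respect to inclusion. Indeed, (J1) asks for the existence of $v_1\in A$ and $v_2\in B$ that are doubly connected; (J2) and (J3) ask for a $v_1\in A$ with $\cpuzzle(v_1,B)\ge\theta$, resp.\ $\cpeople(v_1,B)\ge\sigma$ and $\cpuzzle(v_1,B)\ge\tau$. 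Since $\cpeople(v_1,\cdot)$ and $\cpuzzle(v_1,\cdot)$ are monotone in their second argument, enlarging $A$ or $B$ can only make such witnesses easier to find. Hence: if $A\subseteq A'$, $B\subseteq B'$ and none of (J1)--(J3) holds for $(A',B')$, then none holds for $(A,B)$.

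Granting this, the argument is short. Since $\{\cP_j\}$ is finite, $\cP$ is the common refinement of the $\cP_j$: every block of $\cP$ is a nonempty intersection $\bigcap_j W_j$ with $W_j\in\cP_j$, and this representation is unique (if $\bigcap_j A_j=\bigcap_j B_j$ is nonempty but $A_{j_0}\ne B_{j_0}$ for some $j_0$, the set would lie in $A_{j_0}\cap B_{j_0}=\emptyset$, a contradiction). Now let $A$ and $B$ be two distinct blocks of $\cP$. They cannot be contained in the same block of $\cP_j$ for every $j$: if $A,B\subseteq C_j\in\cP_j$ for all $j$, then $A,B\subseteq\bigcap_j C_j$, which is itself a block of $\cP$, forcing $A=\bigcap_j C_j=B$. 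So there is an index $j_0$ and distinct blocks $A_{j_0}\ne B_{j_0}$ of $\cP_{j_0}$ with $A\subseteq A_{j_0}$ and $B\subseteq B_{j_0}$. Because $\cP_{j_0}$ is inert, no edge is placed between $A_{j_0}$ and $B_{j_0}$, i.e.\ none of (J1)--(J3) holds for the pair $(A_{j_0},B_{j_0})$; by the monotonicity above, none holds for $(A,B)$ either. As $A,B$ were arbitrary distinct blocks of $\cP$, the graph $\cG_0$ built from $\cP$ has no edges, hence $\cP^1=\cP$, so $\cP$ is inert.

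I do not anticipate a genuine obstacle here; the content is the monotonicity observation, and the only point needing a little care is the bookkeeping that two distinct blocks of the common refinement are separated by at least one $\cP_{j_0}$. (One might also note that the blocks of $\cP$ need not be $\Gpuz$-connected even when those of the $\cP_j$ are, so $\cP$ need not be an admissible state of the dynamics; but the proposition only asserts inertness, for which $\Gpuz$-connectivity plays no role.)
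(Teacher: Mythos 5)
Your proof is correct and takes essentially the same approach as the paper's: both rest on the monotonicity of the conditions (J1)--(J3) under shrinking the blocks, combined with the observation that any would-be merge in the common refinement is separated by (and hence contradicts the inertness of) some single $\cP_{j_0}$.
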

\begin{proof}
A partition $\cP$ is inert if and only if for every $W\in \cP$ and every vertex
$v\in \partial_o(W)$ all of the following hold:
$v$ is not doubly connected to any vertex in $W$; $\cpuzzle(v, W)<\theta$;
$\cpeople(v, W)<\sigma$ or $\cpuzzle(v, W)<\tau$. To show this holds for 
the so defined $\cP$, pick $v\in \partial_o(\cap_jW_{j})$ for arbitrary $W_{j}\in \cP_j$ such that their intersection is nonempty.  Then 
$v\in \partial_o (W_{j_0})$ for some $j_0$ and 
$\cpuzzle(v, \cap_jW_{j})\le \cpuzzle(v, W_{j_0})<\theta$.
Other verifications are similar.
\end{proof}

By Proposition~\ref{inert-int}, for any partition $\cP^0$, there exists an inert partition $\langle\cP^0\rangle$,
which is the finest of all inert partitions $\cP$ such that 
$\cP^0$ is finer than $\cP$. 

We call a dynamics on partitions a \df{slowed-down} jigsaw percolation if (J4) is 
replaced by the following. 
\begin{itemize}
\item[(J6)] If $\cG_t$ has no edges, $\cP^{t+1}=\cP^t$; otherwise use some rule 
to choose any {\it nonempty\/} 
subset of edges of $\cG_t$ to form a graph $\cG_t'$, then merge all sets in $\cP^{t}$ which are in the 
same connected component of $\cG_t'$ to obtain $\cP^{t+1}$.  
\end{itemize}

The following corollary, which is now immediate, in particular states 
that the final partition is independent
of the slowed-down version. 

\begin{cor}\label{inert-final}
For any slowed-down jigsaw percolation, and any $\cP^0$, there exists a 
$t$ for which $\cP^t=\langle\cP^0\rangle$.
\end{cor}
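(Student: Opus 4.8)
The plan is to derive the corollary from two monotonicity facts about the refinement order on partitions. First I would observe that any slowed-down jigsaw dynamics is monotone: by (J6), $\cP^{t+1}$ is always a coarsening of $\cP^t$, and it is a \emph{strict} coarsening unless $\cG_t$ has no edges (a nonempty set of edges of $\cG_t$ forces the merger of at least two distinct blocks of $\cP^t$). Since $V$ is finite, the number of blocks is non-increasing and eventually constant, so there is a $t_0$ with $\cP^{t_0}=\cP^{t_0+1}=:\cQ$, and this $\cQ$ is inert. A trivial induction shows that $\cP^0$ is finer than every $\cP^t$, so $\cP^0$ is finer than $\cQ$; since $\langle\cP^0\rangle$ is, by definition, the finest inert partition that $\cP^0$ refines, it follows that $\langle\cP^0\rangle$ is finer than $\cQ$.

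For the reverse relation I would prove, by induction on $t$, that $\cP^t$ is finer than $\langle\cP^0\rangle$. The base case $t=0$ is the definition of $\langle\cP^0\rangle$. For the inductive step, recall that $\cP^{t+1}$ is obtained from $\cP^t$ by merging blocks along the connected components of a subgraph $\cG_t'\subseteq\cG_t$; hence it suffices to show that any edge of $\cG_t$, say between $W_i^t$ and $W_j^t$, joins two blocks that already lie inside a common block of $\langle\cP^0\rangle$. Suppose not: $W_i^t\subseteq A$, $W_j^t\subseteq B$ with $A\neq B$ blocks of $\langle\cP^0\rangle$. The edge is witnessed by one of (J1)--(J3), and in each case I would extract a configuration contradicting the inertness criterion for $\langle\cP^0\rangle$ established in the proof of Proposition~\ref{inert-int}: in case (J1), the vertex $v_2$, being doubly connected to some $v_1\in W_i^t\subseteq A$, lies in $\partial_o(A)$ and is doubly connected to a vertex of $A$; in cases (J2) and (J3), since $\theta\ge\tau\ge 1$ the vertex $v_1\in W_i^t\subseteq A$ has a $\Gpuz$-neighbor in $W_j^t\subseteq B$, hence $v_1\in\partial_o(B)$, and $\cpuzzle(v_1,B)\ge\cpuzzle(v_1,W_j^t)\ge\theta$ (resp.\ $\cpeople(v_1,B)\ge\cpeople(v_1,W_j^t)\ge\sigma$ and $\cpuzzle(v_1,B)\ge\tau$). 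Each of these contradicts inertness of $\langle\cP^0\rangle$ at $A$ or $B$, proving the claim, hence the induction step; in particular $\cQ=\cP^{t_0}$ is finer than $\langle\cP^0\rangle$.

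Combining the two parts, $\cQ$ is finer than $\langle\cP^0\rangle$ and $\langle\cP^0\rangle$ is finer than $\cQ$, so $\cP^{t_0}=\cQ=\langle\cP^0\rangle$, which is the assertion. I do not expect a real obstacle here: the only point requiring any care is the three-way case check in the inductive step, and within it the remark that $\tau,\theta\ge 1$ makes the witnessing vertex a genuine outside-boundary vertex of the block whose connection counts it is being compared with — which is exactly why the corollary is "immediate" once Proposition~\ref{inert-int} is available.
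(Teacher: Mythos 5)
Your proof is correct and is exactly the argument the paper has in mind when it calls the corollary ``immediate'' from Proposition~\ref{inert-int}: the stabilized partition is inert and coarser than $\cP^0$, hence coarser than $\langle\cP^0\rangle$, while the three-case check against the inertness criterion shows the dynamics never merges across blocks of $\langle\cP^0\rangle$. Nothing to add.
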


We recall that, 
for a given graph $\Gpuz$ and a choice of parameters $\sigma$, $\tau$, and $\theta$, we 
let $\solve$ be the event $\{\final=\{V\}\}$ that the jigsaw percolation eventually 
gathers all vertices in a single cluster. (Recall also that the random partition $\final$ assumes the 
default partition $\cP^0$ that consists of singletons.) Most of this paper will 
be concerned with estimating $\probsub{\solve}{p}$ for particular choices of $p$. 
Figure~\ref{big-figure} and analogy 
with other nucleation processes \cite{AL, Hol, GH, GHM, FGG}, suggest that the 
dominant mechanism in jigsaw percolation is growth from
a single center into undisturbed environment. This approach yields
a good, and often optimal, lower bound on the probability 
of $\solve$, as we will see. 
Thus we introduce the following 
\df{local jigsaw percolation}.
As before, we assume that $\Gpuz$ is a connected deterministic graph on $V$ and 
$\Gppl$ is a random graph in which each pair of vertices is independently connected with 
a probability $p$, but here $V$ is typically infinite. Fix a \df{center} $v_0\in V$. 
The dynamics iteratively determines random sets $V_0\subset V_1\subset\ldots\subset V$.
Let $V_0=\{v_0\}$. For $t\ge 0$, $V_{t+1}\supset V_t$ is obtained by 
adjoining to $V_t$ any $z\in V$ which is either: doubly connected 
to a point in $V_t$; or $\cpuzzle(z,V_t)\ge \theta$; or 
($\cpuzzle(z,V_t)\ge \tau$ and $\cpeople(z,V_t)\ge \sigma$). Define the event
$$
\grow=\{\cup_t V_t=V\}. 
$$
We will use comparison with the local version when $\Gpuz$ is the two-dimensional torus 
$\bZ_n^2$. In this case, the corresponding local process 
is on the first quadrant $\bZ_+^2$ with center $(0,0)$. As we will see, 
$\probsub{\solve}p\approx n^2\probsub{\grow}p$ in the relevant regime. 

For a graph $G = (V,E)$ and a subset of vertices, $A\subset V$, 
let $G^A$ denote the subgraph of $G$ induced by $A$.  
That is, $G^A$ is the graph with vertex set $A$ and edge set 
$E^A = \{\{u,v\}\in E : u,v \in A\}$.

We say that a subset of vertices, $A\subset V$, is \df{internally solved} \cite{BCDS} 
if the jigsaw percolation process with people graph $\Gppl^A$ 
solves the puzzle graph $\Gpuz^A$.  We denote this event as $\solve_A$. 
Similarly, for a partition $\cP^0$ of $A$, we denote by $\langle\cP^0\rangle_A$ 
the final partition obtained by running the jigsaw percolation 
with the two induced graphs,
and let $\final_A=\langle\cP^0\rangle_A$ when $\cP^0$ is the set of singletons of $A$.

For two sets $A\subset A'\subset V$, we let  
$
D(A,A')
$
be the event that $\langle \cP^0\rangle_{A'}=\{A'\}$ when the initial partition is $\cP^0=\{A, \{v\}: v\in A'\setminus A\}$. Therefore, 
$$
\probsub{\solve_{A'}\mid\solve_{A}}{p}=\probsub{D(A,A')}{p},
$$
and we may think of $D(A,A')$ as the event that jigsaw percolation 
internally solves $A'$ provided it has already solved $A$.

We now state a key observation; see \cite{AL} for the analogous result 
for bootstrap percolation. 

\begin{lemma}\label{is-double} For any slowed-down jigsaw percolation, 
all sets in the partition at any time are internally solved. 
If $\solve$ happens, then for any $k \leq N/2$ there exists an $A\subset V$, with  
$|A|\in[k, 2k]$, such that $\solve_A$ happens.  
\end{lemma}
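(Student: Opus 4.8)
The plan is to prove the two assertions in turn: the first by induction on the number of merging steps, using the closure operator $\langle\,\cdot\,\rangle$ and Corollary~\ref{inert-final}; the second by an Aizenman--Lebowitz-type doubling argument (cf.\ \cite{AL}) built on top of the first.

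For the first assertion, fix any slowed-down jigsaw percolation on $(\Gpuz,\Gppl)$ started from singletons, producing partitions $\cP^0,\cP^1,\dots$, where $\cP^{t+1}$ is obtained by merging along the connected components of some chosen nonempty edge subset of $\cG_t$. I would show by induction on $t$ that every $W\in\cP^t$ satisfies $\solve_W$. The base case $t=0$ is trivial. For the inductive step write $W=W_1\cup\dots\cup W_m$ with $W_1,\dots,W_m\in\cP^t$ forming one component of the chosen subgraph; by the inductive hypothesis each $W_i$ is internally solved. It then suffices to exhibit some slowed-down jigsaw percolation on the induced graphs $\Gpuz^W,\Gppl^W$, started from the singletons of $W$, that terminates at $\{W\}$: by Corollary~\ref{inert-final} applied to the pair $(\Gpuz^W,\Gppl^W)$ this forces $\langle\cP^0\rangle_W=\{W\}$, i.e.\ $\solve_W$. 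I would run this process in two phases. In phase one, only merge pairs of current clusters that lie inside a common $W_i$; since $\Gpuz^{W_i}$ and $\Gppl^{W_i}$ are precisely the graphs that $\Gpuz^W,\Gppl^W$ induce on $W_i$, the restriction to each block is itself a slowed-down jigsaw percolation on $W_i$, and because $W_i$ is internally solved this restriction cannot halt before reaching $\{W_i\}$ (an inert partition reached by a slowed-down process on $W_i$ must equal $\langle\text{singletons}\rangle_{W_i}=\{W_i\}$). Hence phase one can be continued until the partition of $W$ is $\{W_1,\dots,W_m\}$. In phase two, observe that every edge $\{W_i,W_j\}$ of the chosen subgraph of $\cG_t$ is still an edge of the cluster graph built from $\{W_1,\dots,W_m\}$ via $\Gpuz^W,\Gppl^W$: whichever of (J1)--(J3) witnessed that edge does so through vertices of $W_i\cup W_j\subset W$ and through $\Gpuz$- or $\Gppl$-neighbours lying in $W_j\subset W$, all of which are retained in the induced graphs. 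Since $W_1,\dots,W_m$ is connected in the chosen subgraph, one further merge produces $\{W\}$, which is inert, completing the induction.

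For the second assertion, assume $\solve$ occurs and fix $k\le N/2$. Consider the particular slowed-down jigsaw percolation on $V$ in which, as long as $\cG_t$ has an edge, we merge exactly the two endpoints of a single edge; by Corollary~\ref{inert-final} this reaches $\langle\cP^0\rangle=\final=\{V\}$. Let $M_t$ denote the size of the largest cluster in $\cP^t$. Then $M_0=1$, eventually $M_t=N\ge 2k$, and $M_t$ is nondecreasing; moreover a single two-cluster merge turns two clusters of size at most $M_t$ into one of size at most $2M_t$, so $M_{t+1}\le 2M_t$. Let $t^*$ be the first time with $M_{t^*}\ge k$ (the case $k=1$ being immediate, since any singleton is internally solved, we may assume $k\ge 2$, so $t^*\ge 1$). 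Then $M_{t^*-1}\le k-1$, hence $M_{t^*}\le 2(k-1)<2k$, and so the largest cluster $A$ of $\cP^{t^*}$ satisfies $|A|\in[k,2k]$. By the first assertion $A$ is internally solved, i.e.\ $\solve_A$ holds.

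The main obstacle is the bookkeeping in the inductive step of the first assertion: one must verify that internally solving the blocks $W_i$ and then merging them along the chosen subgraph of $\cG_t$ can indeed be realized inside a \emph{single} slowed-down jigsaw percolation on the induced graphs $\Gpuz^W,\Gppl^W$, and in particular that no merge witness among the $W_i$ is destroyed when passing from $V$ to $W$. This is exactly the locality of conditions (J1)--(J3), and it is the counterpart of the standard fact in bootstrap percolation that a growing cluster is internally spanned; once it is in place, the doubling argument for the second assertion is routine.
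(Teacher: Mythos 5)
Your proposal is correct and takes essentially the same approach as the paper: the paper dismisses the first claim as ``a simple observation'' (your induction using the locality of conditions (J1)--(J3) under restriction to $W$ is precisely the detail being omitted), and it proves the second claim by the same one-edge-at-a-time slowed-down process together with the doubling bound on the largest cluster. No gaps.
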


\begin{proof} The first claim is a simple observation. For the second claim, 
consider a slowed-down jigsaw percolation where at each step the 
graph $\cG_t'$ in (J6) has at most one edge, so that if the process does not
stop exactly two clusters merge. In this version, the size of the largest 
cluster can at most double in a single step. 
\end{proof}

We call a set $A\subset V$ of vertices \df{unstoppable} if every vertex $v\in V\setminus A$ 
is $\Gppl$-connected to at least $\sigma$ vertices in $A$. The following simple observation is
frequently used. 

\begin{lemma}\label{unstoppable} Assume $\tau=1$. For any $A\subset V$, 
$$
\solve_A\cap \{A\text{ is unstoppable}\}\subset \solve.
$$
\end{lemma}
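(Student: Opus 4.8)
The plan is to exhibit a slowed-down full jigsaw percolation (in the sense of (J6)) started from the singletons of $V$ that reaches the partition $\{V\}$. The only randomness sits in $\Gppl$, so I would fix a realization on which both $\solve_A$ holds and $A$ is unstoppable, and argue deterministically that $\solve$ holds; since $\{V\}$ is (trivially) inert and a slowed-down dynamics only coarsens partitions, Corollary~\ref{inert-final} then forces $\langle\cP^0\rangle=\{V\}$, which is exactly $\solve$.

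The first phase imitates the internal dynamics on $A$. The elementary point here is that for any $W\subseteq A$ and any $v\in A$ the counts $\cpeople(v,W)$ and $\cpuzzle(v,W)$ are unchanged if computed in the induced graphs $\Gppl^A,\Gpuz^A$ instead of in $\Gppl,\Gpuz$; hence a pair of clusters that are both contained in $A$ is joined by an edge of the full graph $\cG_t$ if and only if it is joined by an edge of the internal graph $\cG_t^A$. So, as long as $\cG_t$ contains an edge between two distinct clusters that are both subsets of $A$, I would let $\cG_t'$ in (J6) consist of one such edge; this realizes a slowed-down internal jigsaw percolation on $A$, which by Corollary~\ref{inert-final} applied to $(\Gpuz^A,\Gppl^A)$, together with $\solve_A$, reaches $\langle\cP^0_A\rangle_A=\final_A=\{A\}$. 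At the end of this phase the partition of $V$ is $\{A\}\cup\{\{v\}:v\in V\setminus A\}$.

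In the second phase I would absorb the remaining vertices one at a time. Write $B$ for the current large cluster, initially $B=A$. While $B\ne V$, connectedness of $\Gpuz$ gives a vertex $v\in\partial_o(B)$, so $\cpuzzle(v,B)\ge 1=\tau$; and since $A\subseteq B$ and $A$ is unstoppable, $\cpeople(v,B)\ge\cpeople(v,A)\ge\sigma$. Thus (J3) produces an edge between the singleton $\{v\}$ and $B$ in $\cG_t$, and choosing $\cG_t'$ to be exactly this edge merges $\{v\}$ into $B$. The enlarged cluster $B\cup\{v\}$ is again $\Gpuz$-connected and still contains $A$, so the step repeats and $B$ grows monotonically to all of $V$.

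The only mildly delicate part is the first-phase bookkeeping — verifying that restricting the slowed-down schedule to clusters contained in $A$ genuinely reproduces the internal dynamics of $A$ — together with the appeal to Corollary~\ref{inert-final}, which is what guarantees that the final partition is independent of the chosen slowed-down schedule. Once these are in place, the rest is immediate from $\tau=1$, the definition of unstoppable, and connectedness of $\Gpuz$.
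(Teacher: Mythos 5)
Your proof is correct. The paper offers no proof of this lemma (it is labeled a ``simple observation''), and your argument --- using Corollary~\ref{inert-final} to run a slowed-down dynamics that first reproduces the internal dynamics on $A$ until it reaches $\{A\}$, and then absorbs each boundary vertex $v$ via (J3), since $\tau=1$ gives $\cpuzzle(v,B)\ge 1$ and unstoppability gives $\cpeople(v,B)\ge\cpeople(v,A)\ge\sigma$ --- is precisely the intended justification, with all the bookkeeping (induced-subgraph counts agreeing, schedule-independence of the final partition) correctly filled in.
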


\begin{lemma}\label{unstoppable-size}
Assume that $S\subset V$ is a set of size at least $\alpha\frac{\log N}{p}$, 
for $\alpha>\sigma$. Then, 
$$
\probsub{\text{$S$ is unstoppable}}{p}\ge 1-3\sigma N^{1-\alpha/\sigma}.
$$
\end{lemma}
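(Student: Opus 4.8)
The plan is to bound, for each vertex $v$ outside $S$, the probability that $v$ has fewer than $\sigma$ $\Gppl$-neighbours in $S$, and then take a union bound over the at most $N$ such vertices. Fix $v\in V\setminus S$; since $\Gppl$ is \ER with edge probability $p$, the $|S|$ potential edges from $v$ into $S$ are present independently with probability $p$. Rather than estimate a binomial tail directly, I would use the equivalent ``blocking set'' description: $\cpeople(v,S)<\sigma$ holds if and only if some $T\subseteq S$ with $|T|=|S|-\sigma+1$ receives no $\Gppl$-edge from $v$ (take $T$ inside the complement of the neighbourhood of $v$ in $S$). We may assume $|S|>\sigma$, which holds once $N\ge 3$ since $|S|\ge\alpha\log N/p\ge\alpha\log N>\alpha>\sigma$; for $N\le 2$ the statement is trivial. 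A union bound over the $\binom{|S|}{\sigma-1}$ choices of $T$ then gives
$$
\probsub{\cpeople(v,S)<\sigma}{p}\le\binom{|S|}{\sigma-1}(1-p)^{|S|-\sigma+1}\le\frac{N^{\sigma-1}}{(\sigma-1)!}\,e^{-p(|S|-\sigma+1)},
$$
using $\binom{|S|}{\sigma-1}\le|S|^{\sigma-1}/(\sigma-1)!\le N^{\sigma-1}/(\sigma-1)!$ and $1-p\le e^{-p}$.

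Next I would insert the hypothesis $p|S|\ge\alpha\log N$, which bounds the exponential factor by $e^{p(\sigma-1)}N^{-\alpha}$, and sum over the at most $N$ vertices $v\in V\setminus S$ to obtain
$$
\probsub{S\text{ is not unstoppable}}{p}\le\frac{e^{p(\sigma-1)}}{(\sigma-1)!}\,N^{\sigma-\alpha}.
$$
Matching this with the target bound $3\sigma N^{1-\alpha/\sigma}$ is then bookkeeping. For the exponent, $\alpha>\sigma\ge1$ gives $\sigma-1\le\alpha(\sigma-1)/\sigma$, equivalently $\sigma-\alpha\le1-\alpha/\sigma$, so $N^{\sigma-\alpha}\le N^{1-\alpha/\sigma}$. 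For the constant, $p\le1$ gives $e^{p(\sigma-1)}/(\sigma-1)!\le e^{\sigma-1}/(\sigma-1)!$, and one checks $e^{\sigma-1}/(\sigma-1)!\le3\sigma$ for every integer $\sigma\ge1$ (the left side equals $1$ at $\sigma=1$, peaks near $\sigma=3$ at $e^2/2<4$, and decays afterwards). Combining these two facts with the last display yields the lemma.

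I do not anticipate a genuine obstacle here, since this is a first-moment estimate. The only points that require a little care are the blocking-set reformulation, which produces a clean single-vertex bound with no stray constants — the more naive route of bounding each term of the binomial tail sum crudely costs an extra factor of $\sigma$, which is precisely what one cannot afford if the stated constant $3\sigma$ is to come out — and the elementary inequality $\sigma-\alpha\le1-\alpha/\sigma$, which converts the exponent produced by the computation into the one appearing in the statement.
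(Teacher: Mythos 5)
Your proof is correct, but it bounds the single-vertex tail probability by a genuinely different mechanism than the paper does. Both arguments are union bounds over the at most $N$ vertices outside $S$; the difference is in how $\probsub{\cpeople(v,S)<\sigma}{p}=\prob{\Bin(|S|,p)<\sigma}$ is controlled. The paper partitions the $|S|$ potential edges from $v$ into $\sigma$ blocks of size $\lfloor |S|/\sigma\rfloor$ and lower-bounds $\prob{\Bin(|S|,p)\ge\sigma}$ by the probability of a success in every block, which gives the per-vertex bound $\sigma e^{-p\lfloor |S|/\sigma\rfloor}\le \sigma e^{p}N^{-\alpha/\sigma}$ and hence the exponent $1-\alpha/\sigma$ \emph{directly} --- this is why the lemma is stated with that exponent. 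Your blocking-set union bound instead yields $\binom{|S|}{\sigma-1}(1-p)^{|S|-\sigma+1}\le e^{p(\sigma-1)}N^{\sigma-1-\alpha}/(\sigma-1)!$ per vertex, so you need the extra (easy) reconciliation $\sigma-\alpha=(1-\alpha/\sigma)+(\sigma-1)(1-\alpha/\sigma)+\cdots\le 1-\alpha/\sigma$, valid precisely because $\alpha>\sigma$; your verification of the constant $e^{\sigma-1}/(\sigma-1)!\le 3\sigma$ is also correct. What your route buys is a strictly stronger intermediate estimate for $\sigma\ge 2$: you lose only a polynomial factor $|S|^{\sigma-1}$ against the true Poisson-type tail $\frac{(p|S|)^{\sigma-1}}{(\sigma-1)!}e^{-p|S|}$, whereas the paper's block trick degrades the exponent from $p|S|$ to $p|S|/\sigma$. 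What the paper's route buys is that the stated bound falls out with no exponent bookkeeping. Either way the lemma as stated follows, and your treatment of the degenerate small-$N$ and $\sigma=1$ cases is fine.
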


\begin{proof} If $|S|=k$, 
\begin{equation}\label{unstoppable-size-eq1}
\begin{aligned}
\probsub{\text{$S$ is not unstoppable}}{p}&\leq
(N-k)(1-\prob{\Bin(k,p)\ge \sigma})\\
&\le N (1-\prob{\Bin(\lfloor k/\sigma\rfloor,p)\ge 1}^\sigma)\\
&= N(1-(1-(1-p)^{\lfloor k/\sigma\rfloor})^\sigma)\\
&\le N(1-(1-e^{-p\lfloor k/\sigma\rfloor})^\sigma)\\
&\le \sigma N e^{-p\lfloor k/\sigma\rfloor}.
\end{aligned}
\end{equation}
\end{proof}

Another useful simple observation concerns ``dividing up'' the edge probability in 
$\Gppl$.  

\begin{lemma}\label{dividing-up} If $p_j\ge 0$, then 
the union of independent $\Gppl$-graphs with edge probabilities $p_j$ is 
stochastically dominated by the $\Gppl$-graph with edge probability $1\wedge\sum_jp_j$.
\end{lemma}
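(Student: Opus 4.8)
The plan is a direct edge-by-edge coupling. Let $E$ denote the edge set of the complete graph on $V$, and let $G_j$ be the independent copy of $\Gppl$ with parameter $p_j$, so that all the bits $\ind\{e\in G_j\}$, ranging over $e\in E$ and over the index $j$, are mutually independent. The union graph $U=\bigcup_j G_j$ then satisfies $\prob{e\in U}=1-\prod_j(1-p_j)=:q$ for every $e\in E$, and — this is the one point worth noting — the events $\{e\in U\}$ are independent across $e\in E$, since each is a function of a disjoint block of the underlying independent bits. Hence $U$ is itself an instance of $\Gppl$, with edge probability $q$.

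It therefore suffices to show $q\le 1\wedge\sum_j p_j$ and then apply the obvious monotone coupling of two i.i.d.\ bond-percolation configurations: take a single family $\{\xi_e\}_{e\in E}$ of independent uniform$[0,1]$ variables, declare $e$ present in the smaller graph iff $\xi_e\le q$ and present in the larger graph iff $\xi_e\le 1\wedge\sum_j p_j$; then the smaller configuration is contained in the larger one and each has the correct law. Checking $q\le 1\wedge\sum_j p_j$ is elementary: $q\le 1$ is immediate, and if $\sum_j p_j\ge 1$ there is nothing further to prove because the dominating graph is then complete. If $\sum_j p_j<1$, then every $p_j\in[0,1)$, and the Weierstrass product inequality $\prod_j(1-p_j)\ge 1-\sum_j p_j$ (induction on the number of factors, passing to the limit if the index set is countably infinite) gives $q=1-\prod_j(1-p_j)\le\sum_j p_j=1\wedge\sum_j p_j$.

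There is no real obstacle here; the statement is routine. The only things to handle with a little care are (i) recording that independence across edges is preserved under taking unions of independent graphs, so that $U$ is genuinely an honest instance of $\Gppl$ rather than merely a graph with the right edge marginals, and (ii) peeling off the degenerate case $\sum_j p_j\ge 1$ before invoking the product inequality (and, for full generality, confirming that the product inequality and the coupling both survive a countable index set, which they do).
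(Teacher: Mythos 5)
Your argument is correct and is exactly the routine coupling the paper has in mind (the paper states this lemma without proof, treating it as immediate): the union is itself an \ER graph with parameter $1-\prod_j(1-p_j)$, which is at most $1\wedge\sum_j p_j$ by the Weierstrass product inequality, and the standard monotone coupling finishes. No issues.
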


The following elementary lemma is useful when estimating large deviation 
probabilities of a binomial random variable with small expectation. 

\begin{lemma}\label{binomial-easy-lem}
For all $m$, $k$, $\beta$, 
\begin{equation}
\label{binomial-easy}
\prob{\Bin(m,\beta)\ge k)}\le \binom{m}{k}\beta^k\le\left(\frac{3m\beta}{k}\right)^k.
\end{equation}
\end{lemma}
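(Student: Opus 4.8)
The plan is to prove the two inequalities of \eqref{binomial-easy} separately; both are elementary.

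For the first inequality I would use a union bound. Realize $\Bin(m,\beta)$ as the number of successes among $m$ independent trials indexed by $\{1,\dots,m\}$, each succeeding with probability $\beta$. The event $\{\Bin(m,\beta)\ge k\}$ is contained in the union, over all $k$-element subsets $S\subseteq\{1,\dots,m\}$, of the events that all trials in $S$ succeed; each of the latter events has probability $\beta^{k}$ by independence, and there are $\binom{m}{k}$ of them, so $\prob{\Bin(m,\beta)\ge k}\le\binom{m}{k}\beta^{k}$. (When $k>m$ the left side is $0$ and there is nothing to prove; when $k=0$ both sides equal $1$; we assume throughout that $\beta\in[0,1]$, so that $\beta^k$ is a legitimate upper bound in this step.)

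For the second inequality I would bound the binomial coefficient crudely by $\binom{m}{k}\le m^{k}/k!$ and invoke the standard estimate $k!\ge(k/e)^{k}$, which itself follows from $e^{k}=\sum_{j\ge0}k^{j}/j!\ge k^{k}/k!$. Hence $\binom{m}{k}\le(em/k)^{k}\le(3m/k)^{k}$ since $e<3$, and multiplying through by $\beta^{k}$ gives $\binom{m}{k}\beta^{k}\le(3m\beta/k)^{k}$ (reading the case $k=0$ with the convention $0^{0}=1$).

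There is no genuine obstacle here: the lemma is a packaging of a union bound together with the crude factorial estimate. The only points requiring a word of care are the degenerate cases $k=0$ and $k>m$, handled above, and the implicit assumption that $\beta$ is a probability so that the union bound is meaningful.
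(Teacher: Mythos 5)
Your proof is correct and is exactly the standard elementary argument the authors had in mind (the paper states this lemma without proof, treating it as routine): a union bound over $k$-subsets of trials for the first inequality, and $\binom{m}{k}\le m^k/k!\le (em/k)^k\le (3m/k)^k$ for the second. The attention to the degenerate cases $k=0$ and $k>m$ is fine but inessential, since the lemma is only ever invoked with $1\le k\le m$.
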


In Section 4, we also need the following large deviation bound. 

\begin{lemma}\label{binomial-ld}
If $p$ is small enough, $P(\Bin(n,p)\le np/2)\le \exp(-np/7)$. 
\end{lemma}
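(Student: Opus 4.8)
The plan is to apply the exponential Markov (Chernoff) inequality to the lower tail of $X\sim\Bin(n,p)$, whose mean is $np$. For any $s>0$, since $\{X\le np/2\}=\{e^{-sX}\ge e^{-snp/2}\}$, Markov's inequality gives
\[
\prob{X\le np/2}\le e^{snp/2}\,\E e^{-sX}=e^{snp/2}\bigl(1-p+pe^{-s}\bigr)^n .
\]
Using $1-p+pe^{-s}=1-p(1-e^{-s})\le\exp\bigl(-p(1-e^{-s})\bigr)$, this becomes
\[
\prob{X\le np/2}\le\exp\Bigl(np\bigl[\tfrac s2-(1-e^{-s})\bigr]\Bigr).
\]

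I would then minimize the bracket over $s>0$: the minimum is attained at $s=\log 2$, where the bracket equals $\tfrac12\log2-\tfrac12=\tfrac{\log2-1}{2}\approx-0.1534$, which is strictly below $-\tfrac17\approx-0.1429$. Hence $\prob{X\le np/2}\le\exp(-np/7)$. Note that this argument uses only $p\le1$ together with $1-x\le e^{-x}$, so it in fact holds for every $p$; the smallness of $p$ is only convenient for the more elementary route below. One point worth flagging: the constant $\tfrac17$ sits just under the optimal Chernoff exponent $\tfrac{1-\log2}{2}$, so the usual lossy simplification $\prob{X\le(1-\delta)np}\le e^{-\delta^2np/2}$ --- which at $\delta=\tfrac12$ yields only $e^{-np/8}$ --- is not quite enough, and one must keep the exponential bound in its optimized form.

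Alternatively, one can avoid moment generating functions by summing the tail term by term. Writing $a_k=\binom nk p^k(1-p)^{n-k}$, for $1\le k\le\floor{np/2}$ one has $a_k/a_{k-1}=\tfrac{n-k+1}{k}\cdot\tfrac{p}{1-p}\ge\tfrac{2-p}{1-p}\ge2$, so $\prob{X\le np/2}\le 2\,a_{\floor{np/2}}$; bounding this last term through a Stirling estimate $\binom nm\le(en/m)^m/\sqrt{2\pi m}$ and $(1-p)^{n-m}\le e^{-p(n-m)}$ gives an exponent of the shape $np\bigl(\tfrac{\log2-1}{2}+O(p)\bigr)$, which drops below $-np/7$ once $p$ is smaller than an absolute constant --- exactly the role played by the hypothesis --- while the degenerate case $\floor{np/2}=0$ is handled directly by $\prob{X=0}=(1-p)^n\le e^{-np/7}$. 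Either way there is no genuine obstacle: this is a routine large-deviation bound, and the only care required is not to squander the constant, since $\tfrac17$ is chosen close to the best possible exponent.
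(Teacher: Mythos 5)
Your Chernoff argument is correct: the optimized exponent $\frac{\log 2-1}{2}\approx -0.1534$ indeed beats $-\frac17\approx -0.1429$, and you are right both that the hypothesis ``$p$ small'' is not actually needed for this route and that the lossy $e^{-\delta^2 np/2}$ form would fall just short. The paper gives no proof of this lemma at all --- it is stated as a standard large-deviation fact --- so your write-up simply supplies the canonical argument, and your attention to not squandering the constant is exactly the care the statement requires.
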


If we have an event $A$ (that depends on $N$), and $\probsub{A}{p}\to 1$ 
as $N\to\infty$, we say that $A$ occurs \df{asymptotically almost 
surely} (\df{a.~a.~s.}). 

Finally, we remark that we often omit integer parts when we specify integer 
quantities such as lengths and rectangle dimensions.

\section{General graphs: lower bound} 

Assume the puzzle graph $\Gpuz$ has maximum degree $D$, which may depend on $\abs{V}=N$.   
We will prove the following result, which implies Theorem~\ref{intro-lb-general}. We assume 
the AE dynamics, that is, parameters $\tau=\sigma=1$, $\theta=\infty$, 
throughout this section.

\begin{theorem}\label{lb-general}
If $p =\mu/(D\log N)$ and $\mu < \min\{2e^{-(3+\eta)}, e^{-(5+\eta)/2}\}$ where $\eta = \limsup \frac{\log D}{\log N}$, then $\probsub{\solve}{p} \to 0$.
\end{theorem}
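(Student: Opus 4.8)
The plan is to prove the theorem by a first-moment (union bound) argument applied to the dichotomy supplied by Lemma~\ref{is-double}: if $\solve$ occurs then there is an internally solved set $A$ with $|A|\in[k,2k]$ for every $k\le N/2$. Choosing $k$ to be a suitable power of $N$ (a constant power, e.g.\ $k\approx N^{1/2}$ or thereabouts, to be optimized against the exponents in the statement), I would bound $\probsub{\solve}{p}\le\sum_{A:\,|A|\in[k,2k]}\probsub{\solve_A}{p}$. Since there are at most $\binom{N}{2k}\le N^{2k}$ such sets, the whole argument reduces to showing that $\probsub{\solve_A}{p}$ decays fast enough in $|A|$ to beat the entropy factor $N^{2k}$; concretely I need $\probsub{\solve_A}{p}\le N^{-c|A|}$ for a constant $c$ strictly bigger than $2$, with room to spare, whenever $\mu$ is below the stated threshold.

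The core estimate is therefore a bound on the probability that a fixed set $A$ of size $m$ is internally solved by AE jigsaw percolation. Here I would use the structure of AE dynamics ($\sigma=\tau=1$, $\theta=\infty$): for $\solve_A$ to happen, the final partition of $A$ under the induced graphs must be $\{A\}$, and since clusters are always $\Gpuz$-connected, running the slowed-down version that merges one edge at a time (Corollary~\ref{inert-final}) exhibits a sequence of at least $m-1$ successful merges. Each merge of two current clusters $W_i,W_j$ requires an ``adjacent edge'': a pair $v_1\in W_i$, $v_2\in W_j$ with $\{v_1,v_2\}\in\Epuzzle\cap\Epeople$, i.e.\ a $\Gpuz$-edge inside $A$ whose endpoints are also $\Gppl$-connected. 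Since $\Gpuz^A$ has at most $Dm/2$ edges and each is independently ``people-connected'' with probability $p$, the number of such doubly-connected edges available in $A$ is stochastically dominated by $\Bin(Dm/2,p)$; and a connected spanning structure on the $m$ vertices of $A$ needs at least $m-1$ of them. Hence
$$
\probsub{\solve_A}{p}\le \prob{\Bin(\lceil Dm/2\rceil,p)\ge m-1}.
$$
Now I would plug in $p=\mu/(D\log N)$ and apply Lemma~\ref{binomial-easy-lem}: the right side is at most $\bigl(3\cdot\frac{Dm}{2}\cdot p/(m-1)\bigr)^{m-1}\le\bigl(2\mu/\log N\bigr)^{m-1}$ for large $m$, which for $m\ge k\asymp N^{1/2}$ is far smaller than $N^{-3m}$, say. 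A little care with the constants—tracking the $2$ versus the actual $\lceil Dm/2\rceil$, and absorbing the $D$ into the $\eta=\limsup\log D/\log N$ term when $D$ grows polynomially in $N$—produces exactly the two competing thresholds $2e^{-(3+\eta)}$ and $e^{-(5+\eta)/2}$ in the hypothesis (the two expressions presumably arise from two regimes of the optimization, e.g.\ a coarse bound for all $m$ in $[k,2k]$ versus a sharper one, or from separating the contribution of small $D$ from large $D$).

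The main obstacle is getting the constants sharp enough to land on the stated thresholds rather than some weaker bound: the naive argument above wastes a factor because a connected subgraph on $m$ vertices with $m-1$ doubly-connected edges is necessary but the double-connection events for distinct $\Gpuz$-edges are independent, so one must choose which $m-1$ edges of $\Gpuz^A$ form the spanning tree, incurring another combinatorial factor of at most $\binom{Dm/2}{m-1}$ that interacts with $\binom{N}{2k}$. Balancing $\binom{N}{2k}\cdot\binom{Dm/2}{m-1}\cdot p^{m-1}$ over the admissible $m$, while keeping the base of the exponential below $1$, is where the precise form of $\mu<\min\{2e^{-(3+\eta)},e^{-(5+\eta)/2}\}$ has to be extracted; the two-term minimum strongly suggests the optimization splits into the case where the tree-choosing factor $\binom{Dm/2}{m-1}$ dominates and the case where it does not. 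Everything else—the union bound, the reduction via Lemma~\ref{is-double}, the binomial tail via Lemma~\ref{binomial-easy-lem}—is routine once this book-keeping is done.
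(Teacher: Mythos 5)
Your reduction via Lemma~\ref{is-double} is the right starting point, but two of your three main steps break down. First, the necessary condition you extract for a merge is wrong. Under AE dynamics a merge of $W_i$ and $W_j$ via (J3) requires a vertex $v_1\in W_i$ with $\cpeople(v_1,W_j)\ge 1$ \emph{and} $\cpuzzle(v_1,W_j)\ge 1$, but the people-neighbor and the puzzle-neighbor may be \emph{different} vertices of $W_j$; only (J1) needs a doubly connected pair, and (J3) merges between non-singleton clusters typically use none. (Indeed, in the supercritical regime the puzzle is solved although the total number of doubly connected edges is only about $2N\mu/(D\log N)\ll N-1$.) So $\solve_A$ does not force $|A|-1$ edges of $\Epuzzle^A\cap\Epeople^A$, and your bound $\probsub{\solve_A}{p}\le\prob{\Bin(Dm/2,p)\ge m-1}$ is false. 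The correct necessary condition, which is what the paper uses (Lemma~\ref{solveA-lem}), is only that $\Gppl^A$ is connected, hence that $\abs{\Epeople^A}\ge\abs{A}-1$, where $\abs{\Epeople^A}$ is dominated by $\Bin(\abs{A}^2/2,p)$: all $\binom{m}{2}$ pairs are in play, not just the at most $Dm/2$ puzzle edges.

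Second, the entropy count is quantitatively backwards. Summing over all $\binom{N}{2k}\approx e^{2k\log N}$ subsets would require $\probsub{\solve_A}{p}\le e^{-cm\log N}$ with $c>2$, but even your (unjustifiably small) bound $(2\mu/\log N)^{m-1}=e^{-(m-1)(\log\log N+O(1))}$ is enormously \emph{larger} than $N^{-3m}=e^{-3m\log N}$, not smaller, since $\log\log N\ll\log N$; a union bound over all subsets can never close. The paper's fix is to note that internally solved sets are $\Gpuz$-connected and to count connected $k$-sets through a fixed vertex by Kesten's argument (Lemma~\ref{clusters-lem}), giving entropy $N\cdot(eD)^k=e^{\log N+k(1+\log D)}$; per vertex of $A$ this is comparable to the probability decay $e^{-k\log(2D/(\alpha\mu))}$ coming from Lemma~\ref{solveA-lem}, and the competition between the constants $1+\log D$ and $\log(2D/(\alpha\mu))$, optimized over $\alpha\in[1,2]$, is exactly what yields the two-term minimum in the hypothesis on $\mu$. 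Relatedly, the correct scale is $k=\log N$, not $k\asymp N^{1/2}$: at scale $N^{1/2}$ the expected number of people edges inside $A$ is $\abs{A}^2p/2\gg\abs{A}$ (already for bounded $D$), so $\Gppl^A$ is connected with non-vanishing probability and no useful first-moment bound survives there.
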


\begin{remark}
Notice that $\eta \in [0,1]$, and the two expressions in the constraint on $\mu$ are equal when $\eta = 2 \log 2-1$.
\end{remark}

When combined with Theorem 2 of~\cite{BCDS}, Theorem~\ref{lb-general} gives the following corollary. 

\begin{cor}\label{bounded-degree-corollary}
If $\Gpuz$ has maximum degree bounded above by $D$ as $N\to\infty$, then $p_c$ is bounded between two  constants (depending only on $D$) times $1/\log N$.
\end{cor}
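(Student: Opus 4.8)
The plan is to run an Aizenman--Lebowitz-type first-moment argument at the single scale $k=\lceil\log N\rceil$, in which the only probabilistic ingredient is the elementary fact that an internally solved set must be connected in the people graph. By Lemma~\ref{is-double}, on the event $\solve$ there is some $A\subset V$ with $m:=|A|\in[k,2k]$ such that $\solve_A$ occurs. The key structural point --- and the only place the AE hypothesis $\sigma=\tau=1$, $\theta=\infty$ enters --- is that every cluster produced during the dynamics is connected both in $\Gpuz$ and in $\Gppl$: since $\theta=\infty$ forbids (J2), each edge of $\cG_t$ between clusters $W_i$ and $W_j$ arises from (J1) or (J3) and is therefore accompanied by a genuine $\Gppl$-edge joining $W_i$ to $W_j$, so merging a connected component of $\cG_t$ keeps the new cluster $\Gppl$-connected. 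Hence $\solve_A$ forces $A$ to be $\Gpuz$-connected and $\Gppl^A$ to be connected; in particular $\Gppl^A$ has at least $m-1$ edges among its $\binom m2$ possible ones.

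This yields the first-moment bound
\[
\probsub{\solve}{p}\;\le\;\sum_{m=k}^{2k}\ \sum_{A:\,|A|=m,\ A\ \Gpuz\text{-connected}}\probsub{\solve_A}{p}.
\]
I would then estimate the two factors crudely. The number of $\Gpuz$-connected $m$-subsets of $V$ is at most $N(eD)^{m}$: fix one of the at most $N$ vertices of the subset and enumerate connected subgraphs through it by a greedy spanning-tree growth, using only $\Delta(\Gpuz)\le D$. For the probability, the first inequality of Lemma~\ref{binomial-easy-lem}, together with $\binom m2/(m-1)=m/2$ and $\binom nj\le (en/j)^j$, gives $\probsub{\solve_A}{p}\le\prob{\Bin(\binom m2,p)\ge m-1}\le\binom{\binom m2}{m-1}p^{m-1}\le(emp/2)^{m-1}$.

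Now substitute $p=\mu/(D\log N)$, so $Dp=\mu/\log N$, and write $m=c\log N$ with $c\in[1,2]$. The inner sum is then at most
\[
N(eD)^{m}\Big(\frac{emp}{2}\Big)^{m-1}=N\cdot\frac{2D}{ec\mu}\Big(\frac{e^{2}c\mu}{2}\Big)^{c\log N},
\]
and since $\log D\le(\eta+o(1))\log N$ by the definition of $\eta$, its logarithm equals $\log N\,\big(1+\eta+c\log(e^{2}c\mu/2)+o(1)\big)$. It therefore suffices that $f(c):=1+\eta+c\log(e^{2}c\mu/2)$ be negative for every $c\in[1,2]$. Since $f''(c)=1/c>0$, the function $f$ is convex, so its maximum on $[1,2]$ is attained at an endpoint, and one checks that $f(1)=3+\eta+\log(\mu/2)<0$ is equivalent to $\mu<2e^{-(3+\eta)}$, while $f(2)=5+\eta+2\log\mu<0$ is equivalent to $\mu<e^{-(5+\eta)/2}$. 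Under the hypothesis both hold, hence $f(c)\le\max\{f(1),f(2)\}<0$ on $[1,2]$; so each of the $O(\log N)$ terms in the double sum is $O(N^{-\delta})$ for a fixed $\delta>0$, and $\probsub{\solve}{p}\to 0$.

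The only genuinely delicate step is the structural reduction in the first paragraph: recognizing that in AE dynamics an internally solved set is automatically $\Gppl$-connected lets one throw away every feature of the dynamics beyond a bare edge count, and this is what makes the crude first-moment estimate work at scale $\log N$. Everything after that is routine calculus; the one cosmetic subtlety is that Lemma~\ref{is-double} only confines $|A|$ to the window $[\log N,2\log N]$, which is why the final constraint on $\mu$ is a minimum of two quantities --- the values of the exponent $f$ at the two ends of that window --- and the convexity of $f$ is precisely what certifies that no intermediate scale is more dangerous.
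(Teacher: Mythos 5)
Your first-moment argument correctly establishes one half of the corollary, namely the lower bound $p_c\ge c(D)/\log N$, and it is essentially the paper's proof of Theorem~\ref{lb-general}: the reduction via Lemma~\ref{is-double} to an internally solved set of size in $[\log N,2\log N]$, the observation that AE clusters are $\Gppl$-connected (so $|\Epeople^A|\ge |A|-1$), and the count $N(eD)^m$ of $\Gpuz$-connected $m$-sets (Lemma~\ref{clusters-lem}) are all the same. The only difference is cosmetic: you bound $\prob{|\Epeople^A|\ge m-1}$ by the raw binomial coefficient $\binom{\binom m2}{m-1}p^{m-1}\le (emp/2)^{m-1}$ rather than by the exponential moment used in Lemma~\ref{solveA-lem}, and your endpoint-plus-convexity analysis of $f(c)$ on $[1,2]$ recovers exactly the paper's constraint $\mu<\min\{2e^{-(3+\eta)},e^{-(5+\eta)/2}\}$. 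That half is sound.

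However, the corollary asserts that $p_c$ is bounded \emph{between} two constants times $1/\log N$, and your proposal proves only that $\probsub{\solve}{p}\to 0$ for small $\mu$, i.e., only $p_c\gtrsim 1/\log N$. The matching upper bound on $p_c$ --- that $\probsub{\solve}{p}\to 1$ once $p\ge C(D)/\log N$ --- cannot come out of a first-moment computation; it requires a constructive nucleation argument: grow an internally solved cluster of size $\gg\log N$ along a path or spanning tree of $\Gpuz$ with probability at least $N^{-1+\delta}$, repeat over $\sim N^{\delta'}$ disjoint attempts so that one succeeds a.~a.~s., and then show the resulting cluster is unstoppable via Lemma~\ref{unstoppable-size} and sprinkling. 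The paper imports this half from Theorem 2 of \cite{BCDS} (variants are reproved in Section 4 via Theorem~\ref{ub-general}). As written, your argument proves Theorem~\ref{lb-general} but not the two-sided statement of the corollary; you need to either cite the upper bound or supply the growth-and-unstoppability construction.
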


The proof of the Theorem~\ref{lb-general} appears after the next two important lemmas.

\begin{lemma}
\label{solveA-lem}
Suppose $A\subset V$ is a set of vertices such that $\abs{A} = \alpha \log N$ and $\Gpuz^A$ is connected. If $p = \mu /(D \log N)$ with $\mu < 2D/\alpha$, then
\begin{equation*}
\probsub{\solve_A}{p} \leq \frac{2D}{\alpha \mu} N^{\alpha(1-\alpha\mu/(2D) - \log(\frac{2D}{\alpha\mu}))}.
\end{equation*}
\end{lemma}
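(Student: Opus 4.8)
The plan is to reduce $\solve_A$ to a statement about the number of edges of the induced people graph $\Gppl^A$, and then dispatch that by a sharp binomial tail bound. The hypothesis $\mu<2D/\alpha$ will turn out to be exactly the condition under which the relevant binomial mean lies below the threshold we care about, so that a Chernoff-type estimate applies.

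First I would establish the deterministic core: on the event $\solve_A$, the graph $\Gppl^A$ has at least $|A|-1$ edges. To see this, run a slowed-down jigsaw percolation on $A$ (using the two induced graphs) in which the graph $\cG_t'$ in (J6) is always chosen to consist of a single edge of $\cG_t$, so that each non-trivial step merges exactly two clusters. By Corollary~\ref{inert-final} this process still reaches $\{A\}$ whenever $\solve_A$ holds, and since it starts from $|A|$ singletons and every non-trivial step lowers the cluster count by one, it performs exactly $|A|-1$ merges. Now use that we are in the AE regime $\sigma=\tau=1$, $\theta=\infty$: rule (J2) is vacuous, and whenever two clusters $W,W'$ get joined in $\cG_t$ via (J1) or (J3) there is a $\Gppl$-edge $e_t$ between a vertex of $W$ and a vertex of $W'$; both endpoints lie in $A$, so $e_t\in E(\Gppl^A)$. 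The endpoints of $e_t$ lie in distinct clusters just before step $t$ and in a common cluster at every later time, so the edges $e_1,\dots,e_{|A|-1}$ produced over the run are pairwise distinct, giving $|E(\Gppl^A)|\ge |A|-1$.

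The second step is the probability estimate. Since $\Gppl^A$ is Erdős–Rényi on $|A|$ vertices, $|E(\Gppl^A)|$ is $\Bin(m,p)$ with $m=\binom{|A|}{2}$, so $\probsub{\solve_A}{p}\le\prob{\Bin(m,p)\ge|A|-1}$. Writing $k=|A|-1$ and $x=\alpha\mu/(2D)$, the identities $|A|=\alpha\log N$ and $p=\mu/(D\log N)$ give $mp=kx$, and $\mu<2D/\alpha$ is precisely $x<1$, hence $mp<k$, so the standard upper-tail Chernoff bound $\prob{\Bin(m,p)\ge k}\le e^{-mp}(emp/k)^k$ is in force and yields
$$\probsub{\solve_A}{p}\le e^{-kx}(ex)^k=e^{k(1-x+\log x)}=e^{-(1-x+\log x)}\,N^{\alpha(1-x+\log x)}.$$
Since $0<x<1$ one has $e^{-(1-x+\log x)}=e^{x-1}/x\le 1/x=2D/(\alpha\mu)$, and $\log x=-\log(2D/(\alpha\mu))$, so the right-hand side is bounded by $\frac{2D}{\alpha\mu}N^{\alpha(1-\alpha\mu/(2D)-\log(2D/(\alpha\mu)))}$, which is the claim.

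The real work is all in the first step: making precise that each of the $|A|-1$ cluster merges can be charged to a genuinely new people-edge inside $A$. Passing to the one-merge-per-step slowed-down dynamics (legitimate by Corollary~\ref{inert-final}) and invoking $\theta=\infty$ to kill rule (J2) is what makes this clean — with a finite $\theta$ the bound would fail, since (J2)-merges carry no people-edge. The remainder is a routine computation; the only point of care is to use the Chernoff bound with its $e^{-mp}$ factor rather than the cruder $\binom{m}{k}p^k\le(3mp/k)^k$ of Lemma~\ref{binomial-easy-lem}, because that factor is exactly what supplies the $-\alpha\mu/(2D)$ term in the exponent and the $2D/(\alpha\mu)$ prefactor.
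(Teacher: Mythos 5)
Your proof is correct and follows essentially the same route as the paper: both reduce $\solve_A$ to the event $\abs{\Epeople^A}\ge\abs{A}-1$ (the paper via connectivity of $\Gppl^A$, which holds since $\theta=\infty$; you via an explicit merge-counting argument) and then apply a Chernoff bound to the binomial edge count, with your closed-form bound $e^{-mp}(emp/k)^k$ being exactly the optimized version of the paper's exponential-moment computation at $\theta=\log(2D/(\alpha\mu))$.
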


\begin{proof}
Observe that in order to solve any connected puzzle, the people graph must at least be connected, and any connected graph on $\abs{A}$ vertices must have at least $\abs{A}-1$ edges, so
\begin{equation}
\label{solveA bound}
\probsub{\solve_A}{p} \leq \probsub{\Gppl^A \text{ is connected}}{p} \leq \probsub{\abs{\Epeople^A} \geq \abs{A}-1}{p}.
\end{equation}
The distribution of $\abs{\Epeople^A}$ is stochastically dominated by Binomial($\abs{A}^2/2, p$), so for any $\theta>0$ we have
\begin{align*}
\probsub{\abs{\Epeople^A} \geq \abs{A}-1}{p} & = \probsub{e^{\theta \abs{\Epeople^A}} \geq e^{\theta(\abs{A}-1)}}{p}\\
&\leq e^{-\theta(\abs{A}-1)} \left[1 + \left(e^{\theta}-1\right)p\right]^{\abs{A}^2/2}\\
&\leq \exp\left[(e^\theta-1)\abs{A}^2 p / 2 - \theta \abs{A} + \theta  \right].
\end{align*}
Substituting $\theta = \log(2D/(\alpha \mu)) >0$,  and using inequality (\ref{solveA bound}) gives the result.
\end{proof}

\begin{lemma}
\label{clusters-lem}
Fix a vertex $v\in V$, and let
$$C(v,k) := \{A\subset V : v\in A, \abs{A}=k, \Gpuz^A \text{ is connected}\}.$$
Then,
$$
\abs{C(v,k)} \leq (eD)^k
$$
\end{lemma}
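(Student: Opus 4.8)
The plan is to reduce the count to one over spanning trees and then encode each such tree injectively. First, every $A\in C(v,k)$ has at least one spanning tree of its induced subgraph $\Gpuz^A$, and such a spanning tree has vertex set exactly $A$; hence $A\mapsto(\text{a chosen spanning tree of }\Gpuz^A)$ is an injection from $C(v,k)$ into the family $\cT(v,k)$ of subtrees of $\Gpuz$ on $k$ vertices containing $v$, so $|C(v,k)|\le|\cT(v,k)|$.

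To bound $|\cT(v,k)|$, fix once and for all, for each vertex $w\in V$, an arbitrary linear order of the (at most $D$) edges of $\Gpuz$ incident to $w$. Given $T\in\cT(v,k)$, root $T$ at $v$ and run the depth-first search of $T$ which, upon first reaching a vertex, scans its incident $\Gpuz$-edges in the fixed order and recurses through an edge precisely when that edge belongs to $T$ and leads to an as-yet-unvisited vertex. The root contributes at most $D$ scanned ``edge slots'', and each of the other $k-1$ vertices contributes at most $D-1$ slots (all incident edges except the one to its parent), so the total number of slots is at most $D+(k-1)(D-1)\le Dk$. Emit a bit at each slot, $1$ if the search recurses through that edge and $0$ otherwise; exactly $k-1$ bits equal $1$ (one per edge of $T$). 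Letting $t_1<\dots<t_{k-1}$ be the positions of the $1$'s, we obtain a map $T\mapsto\{t_1,\dots,t_{k-1}\}$ from $\cT(v,k)$ into the $(k-1)$-element subsets of $\{1,\dots,Dk\}$.

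The step that needs care is injectivity of this map: from the set $\{t_1,\dots,t_{k-1}\}$ one replays the depth-first search, which is deterministic because the per-vertex edge orders make every scan position canonical, emitting $1$ exactly at the recorded slots and $0$ elsewhere; this recovers $T$. Granting injectivity, $|\cT(v,k)|\le\binom{Dk}{k-1}$, and since $\binom{Dk}{k-1}\le\binom{Dk}{k}\le(Dk)^k/k!\le(eD)^k$, the bound $|C(v,k)|\le(eD)^k$ follows. (The degenerate case $D\le1$, where $\Gpuz^A$ can be connected only for $k\le2$, is immediate.)

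The main obstacle is entirely bookkeeping: arranging the depth-first encoding so that it is genuinely injective while the number of ``edge slots'' is bounded by the tree-independent quantity $Dk$, so that all codes land in a single fixed universe. Once the slot sequence is pinned down from the fixed per-vertex edge orderings, injectivity and the slot count are routine, and the remainder is the standard estimate $\binom{Dk}{k}\le(eD)^k$.
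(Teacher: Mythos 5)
Your proof is correct, but it takes a genuinely different route from the paper. The paper follows Kesten's probabilistic counting trick: run site percolation on $\Gpuz$ with vertex probability $1/D$, observe that each $A\in C(v,k)$ is the open cluster of $v$ with probability at least $(1/D)^k(1-1/D)^{(D-1)k}$, and sum over $A$ to get $\abs{C(v,k)}\le [D(1-1/D)^{-(D-1)}]^k\le (eD)^k$ in three lines. You instead pass to spanning trees and encode each one injectively by the positions of the $k-1$ ``recurse'' bits in a canonical depth-first scan with at most $Dk$ slots, obtaining $\abs{C(v,k)}\le\binom{Dk}{k-1}\le(eD)^k$. Your injectivity argument is sound: the per-vertex edge orderings make the replay deterministic, in a tree every non-parent tree edge scanned at a vertex leads to an unvisited vertex (so exactly one $1$ is emitted per tree edge), and the slot count $D+(k-1)(D-1)=k(D-1)+1\le Dk$ is tree-independent, so all codes live in $\binom{\{1,\dots,Dk\}}{k-1}$. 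The step $\binom{Dk}{k-1}\le\binom{Dk}{k}$ needs $D\ge 2$, which you correctly set aside with the degenerate case. The trade-off: the paper's argument is shorter and slicker; yours is purely combinatorial, needs no probabilistic input, and in fact yields the marginally sharper bound $\binom{Dk}{k}\sim\bigl(D^D/(D-1)^{D-1}\bigr)^k$ up to polynomial factors, though this gains nothing for the application.
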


The proof follows an argument of Kesten (\cite{Kes1}, pg.~85).

\begin{proof}
Consider independent site percolation on $\Gpuz$ with vertex probability $1/D$.  The probability that $A$ is a connected component in the site percolation graph is
\begin{align*}
(1/D)^k (1-1/D)^{\abs{\partial_o A}} \geq (1/D)^k (1-1/D)^{(D-1)k},
\end{align*}
where the inequality follows because every vertex in $A$ has at most $D$ neighbors, at least one of which is in $A$.  Summing the probability that $A$ is a connected component in the site percolation graph over all sets $A\in C(v,k)$ gives the probability that $v$ is in a site percolation cluster of size $k$, which of course is at most $1$. Therefore,
$$
\abs{C(v,k)} (1/D)^k (1-1/D)^{(D-1)k} = \sum_{A\in C(v,k)} (1/D)^k (1-1/D)^{(D-1)k} \leq 1.
$$
This yields
\begin{align*}
\abs{C(v,k)} &\leq [D(1-1/D)^{-D+1}]^k  = D^k \left[ 1 + \frac{1}{D-1} \right]^{(D-1)k} \leq D^k e^k,
\end{align*}
where in the last inequality we used $1+x\leq e^x$.
\end{proof}

\begin{proof}[Proof of Theorem~\ref{lb-general}] Apply Lemma~\ref{is-double} with $k=\log N$, 
then apply Lemmas~\ref{solveA-lem} and~\ref{clusters-lem} to get
\begin{align*}
\probsub{\solve}{p} &\leq \probsub{\bigcup_{A\subset V:\abs{A} \in [\log N, 2\log N]} \solve_A}{p} \\
&\leq \sum_{v\in V}\sum_{k\in [\log N, 2\log N]} \sum_{A\in C(v,k)} \probsub{\solve_A}{p} \\
&\leq (N \log N ) \cdot \sup_{\alpha \in [1,2]} \left\{ (eD)^{\alpha \log N} \frac{2D}{\alpha\mu} N^{\alpha(1-\alpha\mu/(2D) - \log(\frac{2D}{\alpha\mu}))}  \right\}\\
&\leq \frac{2}{\mu} (\log N)\sup_{\alpha \in [1,2]} \exp\left[ \left(2\alpha+1 + \frac{\log D}{\log N} - \alpha^2 \mu/(2D) - \alpha\log\left(\frac{2}{\alpha\mu}\right) \right) \log N\right].
\end{align*}
When the $\limsup$ of the coefficient of $\log N$ in the exponential is strictly smaller than $0$ for any $\alpha \in [1,2]$, we see that $\probsub{\solve}{p}\to 0$.  Recalling that $\eta = \limsup (\log D / \log N)$, this condition is satisfied whenever  
\begin{align*}
\mu &< 2 e^{-2} \inf_{\alpha\in [1,2]} \frac{1}{\alpha} e^{-(1+\eta)/\alpha} \\
& = 2e^{-2} \min\left\{ e^{-(1+\eta)}, \frac{1}{2}e^{-(1+\eta)/2}\right\}.
\end{align*}
This completes the proof.
\end{proof}

\section{General graphs: upper bound}

\newcommand{\step}{\text{\tt Step}}

We first formulate a general theorem, then prove Theorem~\ref{intro-ub-list} 
in subsequent corollaries. This section is also devoted only to AE dynamics.

Fix a graph $G=(V,E)$, and positive integers $a$ and $k$. We will denote 
by $\cS_k$ a set of sequences of \df{length} $k$, consisting of vertices and \df{started at}
a fixed vertex $v_0\in V$. We will assume that $\cS_k$ is 
given recursively by a \df{building algorithm} as follows. 
Let $\cS_0=\{v_0\}$. For every $i\in [1,k]$, there exists a a \df{successor map} $\step_i$
defined on $\cS_{i-1}$ that attaches to every sequence $(v_0,\ldots, v_{i-1})\in \cS_{i-1}$ 
a set $\step_i(v_0,\ldots, v_{i-1})\subset V$, so that
$$
\cS_{i}=\{(v_0,\ldots, v_{i-1}, v_{i}): 
(v_0,\ldots, v_{i-1})\in \cS_{i-1}, v_{i}\in \step_i(v_0,\ldots, v_{i-1})\}.
$$
We also assume that each 
$B=\step_i(v_0,\ldots, v_{i-1})\subset V$ is ordered, and 
for $w\in B$, we let
$\overleftarrow B^w$ be the set of vertices in $B$ that are ahead of, or equal to, $w$
in the ordering. We think of $\overleftarrow B^{v_{i}}$
as the ``inspected'' vertices.
We call $\cS_k$ \df{$a$-admissible} if the following holds. Fix any sequence 
$(v_0,\ldots, v_k)\in \cS_k$, and let $B_i=\step_i(v_0,\ldots,v_{i-1})$, $1\le i\le k$, and 
$B_0=\{v_0\}$. Then, for $1\le i\le k$,
\begin{itemize}
\item $|B_i|\ge a$;
\item $\{v_0,\ldots, v_i\}$ is a connected subset of graph $G$; and
\item The selection up to $i$ does not affect selection at $i$, i.e.,
\begin{equation}\label{building-independence}
\left(\bigcup_{j=0}^{i-1}\overleftarrow B_j^{v_{j}}\right)
\bigcap
  B_{i}=\emptyset. 
\end{equation}
\end{itemize}

For a fixed probability $q$, we call $\size(G, v,q)$
the (random) number of vertices in the connected component of $v\in V$ in site percolation
on $G$ where vertices other than $v$ are open independently with probability $q$, and $v$ is open with probability $1$.

For a nondecreasing integer sequence $D_N$, 
we call a sequence of $\Gpuz$-graphs \df{$D_N$-regular} if the following is true for some 
constants $c,C>0$:  $D_N\ge 2C$ 
and there exist disjoint sets $V_\ell\subset V$, $\ell=1,\ldots, N^c$,  so that 
induced subgraphs $G_\ell=\Gpuz^{V_\ell}$ have the properties that 
\begin{itemize}
\item[(R1)] for each $\ell=1,\ldots, N^c$, $G_\ell$ contains a $cD_N$-admissible set of length at least $c\log N$ started at some $w_\ell\in V_\ell$; and
\item[(R2)]  
$\liminf_N\inf_\ell\prob{\size(G_\ell,w_\ell,C/D_N)\ge 2 D_N(\log N)^2}>0 $. 
\end{itemize}

\begin{theorem}\label{ub-general}
If $\Gpuz$ is {$D_N$-regular}, 
and $p=\frac{\mu}{D_N\log N}$ for a large enough constant 
$\mu$, then $\probsub{\solve}{p} \to 1$.
\end{theorem}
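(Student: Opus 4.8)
The plan is a nucleation argument. With overwhelming probability one of the regions $V_\ell$ produces an internally-solved set $A$ that is large enough to be \emph{unstoppable} in the sense of Lemma~\ref{unstoppable}, and then Lemmas~\ref{unstoppable} and~\ref{unstoppable-size} (recall $\tau=\sigma=1$ under AE dynamics) deliver $\solve$. Since the $V_\ell$ are disjoint, the people graphs $\Gppl^{V_\ell}$ are mutually independent, so it suffices to prove that for each $\ell$
$$
\probsub{E_\ell}{p}\ \ge\ N^{-c+\delta}
$$
for some fixed $\delta=\delta(c)>0$, where $E_\ell$ is the event that $V_\ell$ contains an internally-solved set $A$ with $|A|\ge 2D_N(\log N)^2$; then $\probsub{\bigcap_\ell E_\ell^c}{p}\le(1-N^{-c+\delta})^{N^c}\le e^{-N^\delta}\to 0$. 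On $E_\ell$ one has $|A|\ge 2D_N(\log N)^2=2\mu\cdot(\log N/p)$ with $2\mu>\sigma=1$; conditioning on the $\Gppl$-edges examined while building $A$ and applying a union bound as in Lemma~\ref{unstoppable-size} to the remaining fresh edges from $A$ to $V\setminus V_\ell$, one concludes that the cluster of $A$ in the global dynamics a.\,a.\,s.\ spreads to all of $V$ (using that $\Gpuz$ is connected, exactly as in the proof of Lemma~\ref{unstoppable}). So everything reduces to the lower bound on $\probsub{E_\ell}{p}$, which I split into two phases.

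Phase~1 (getting off the ground) uses the $cD_N$-admissible set of (R1). Reveal $\Gppl$-edges step by step: given an internally-solved cluster containing $\{w_\ell=v_0,\dots,v_{i-1}\}$, scan $B_i=\step_i(v_0,\dots,v_{i-1})$ in its order and let $v_i$ be the first candidate having a $\Gppl$-edge into $\{v_0,\dots,v_{i-1}\}$; if none exists, Phase~1 fails. Since $\{v_0,\dots,v_i\}$ is $\Gpuz$-connected, rule (J3) merges the singleton $\{v_i\}$ into the cluster and the cluster stays internally solved. The admissibility condition \eqref{building-independence} guarantees that the vertex-pairs inspected before step $i$ are disjoint from those inspected at step $i$, so the steps are conditionally independent; with $|B_i|\ge cD_N$ candidates each missing with probability $(1-p)^i$, step $i$ succeeds with probability at least $1-(1-p)^{icD_N}\ge 1-e^{-ic\mu/\log N}$. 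Multiplying over $i=1,\dots,c\log N$ and using $1-e^{-x}\ge x/2$ for $x\le 1$ together with $1-e^{-x}\ge 1-e^{-1}$ for $x\ge 1$, a routine estimate gives
$$
\probsub{\text{Phase 1 reaches size }c\log N}{p}\ \ge\ N^{-\kappa c-O(1/\mu)}
$$
for an absolute constant $\kappa<1$; for $\mu$ large this exceeds $N^{-c+\delta}$ for some $\delta>0$.

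Phase~2 (growth) promotes the internally-solved cluster from size $c\log N$ to size $2D_N(\log N)^2$. As long as the current internally-solved cluster $W$ has size $\ge c\log N$, a $\Gpuz$-neighbor $z$ of $W$ is merged in as soon as it has a $\Gppl$-edge into $W$, an event of probability at least $1-(1-p)^{c\log N}\ge C/D_N$ once $\mu$ is large — and here one inspects, for each $z$, only its edges to a fixed set of $c\log N$ vertices of $W$, so that no vertex of $V$ ever has more than $O(\log N)=o(|A|)$ of its potential edges into the eventual $A$ examined. Running a breadth-first exploration of $G_\ell$ from $w_\ell$ and coupling ``$z$ merges'' with ``$z$ is open'' therefore makes the internally-solved cluster containing the Phase~1 seed stochastically dominate the site-percolation cluster of $w_\ell$ in $G_\ell$ at density $C/D_N$; by hypothesis (R2) this cluster has size $\ge 2D_N(\log N)^2$ with probability bounded below, uniformly in $\ell$ and $N$. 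Hence $\probsub{E_\ell}{p}\ge \probsub{\text{Phase 1}}{p}\cdot c_0\ge N^{-c+\delta}$ for a constant $c_0>0$, completing the sketch.

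The main obstacle is not any single estimate but the bookkeeping of which $\Gppl$-edges have been examined at each stage, so that the Phase~1, Phase~2 and unstoppability probabilities may be combined as if independent. Condition \eqref{building-independence} is precisely what legitimizes the sequential reveal in Phase~1; in Phase~2 one must route the site-percolation coupling through edges not yet examined in Phase~1, which works because the grown cluster is a factor of order $D_N\log N$ larger than the Phase~1 seed and only $O(D_N\log N)$ vertices were ever touched as rejected candidates. One must also check that the exponent $\kappa c+O(1/\mu)$ in the Phase~1 bound genuinely falls below $c$ once $\mu$ is large, so that the amplification over the $N^c$ independent regions forces $\probsub{\solve}{p}\to 1$, and that the large internally-solved nucleus indeed spreads to all of $V$ — which follows from the connectivity of $\Gpuz$ together with the freshness of the $\Gppl$-edges from the nucleus to $V\setminus V_\ell$.
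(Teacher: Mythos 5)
Your three-phase strategy (seed via the admissible building algorithm, growth via the site-percolation comparison in (R2), then unstoppability) is exactly the paper's, but you replace the paper's key device---Lemma~\ref{dividing-up}, i.e.\ splitting $p$ into $p_1+p_2+p_3$ and running each phase on an \emph{independent} people graph---with a direct bookkeeping argument about which $\Gppl$-edges have been examined. That substitution creates a genuine gap at the Phase~1~$\to$~Phase~2 handoff. A vertex $z$ that was a rejected candidate in $\overleftarrow{B_i}^{v_i}$ at a late step $i$ of Phase~1 has had \emph{all} of its edges to $\{v_0,\dots,v_{i-1}\}$ examined and found closed; conditionally on the Phase~1 outcome, such a $z$ cannot be declared ``open'' with probability $C/D_N$ by testing its edges to the seed, and in the worst case ($i=c\log N$) its conditional probability of being open is $0$. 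These compromised vertices number $O(D_N\log N)$, but they are precisely the $\Gpuz$-neighbors of the seed path, i.e.\ the first ring of the breadth-first exploration you want to run. Hypothesis (R2) is a statement about ordinary site percolation on $G_\ell$ and carries no robustness to adversarially closing a set of vertices adjacent to $w_\ell$'s seed, so your claimed stochastic domination of $\size(G_\ell,w_\ell,C/D_N)$ does not follow from the stated hypotheses. Your remark that ``the grown cluster is a factor of order $D_N\log N$ larger than the Phase~1 seed'' does not address this, since connectivity from $w_\ell$ can be severed by closing a small set of vertices near the seed.

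The fix is the one the paper uses: invoke Lemma~\ref{dividing-up} to run Phase~2 on a fresh, independent set of ``green'' edges at density $p_2=\mu_2/(D_N\log N)$, declaring $z$ open if it has a green edge to the largest $p_1$-internally-solved set containing $w_\ell$. Then openness is genuinely i.i.d.\ with probability at least $C/D_N$ given $F_1$, (R2) applies verbatim, and the union of the three graphs is dominated by a single $\Gppl$ at density $p_1+p_2+p_3$. (Your unstoppability step is essentially fine, since edges from $V\setminus V_\ell$ into $A$ are never examined in Phases~1--2, and even for vertices inside $V_\ell$ only $O(\log N)=o(|A|)$ of their edges into $A$ have been revealed; the paper nevertheless sprinkles a third density $p_3$ there too, which is cleaner.) A minor additional point: your Phase~1 exponent should come out as $\pi^2/(6c\mu)$ via the computation in \eqref{ub-eq1}, i.e.\ it is $O(1/\mu)$ with no ``$\kappa c$'' term; your stated form still suffices for the amplification over the $N^c$ regions, but the bookkeeping gap above must be repaired before the argument is complete.
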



\begin{proof} 
We will use Lemma~\ref{dividing-up}, with three probabilities.
Assume first that $p_1=\mu_1/(D_N\log N)$, where $\mu_1=3/c^2$. 
Fix an $\ell$ and let $F_1$ be the event that 
$w_\ell$ is included in an internally solved set of size $c\log N$
within $G_\ell$.   By (R1), 
we may build such a cluster
by using the building algorithm for
the $cD_N$-admissible set of sequences. In this algorithm, we let $v_0=w_\ell$ and 
check the vertices in $\step_i(v_0,\ldots, v_{i-1})$ in their given order and stop
checking once we find one that is $\Gppl$-connected to $\{v_0,\ldots, v_{i-1}\}$.
Therefore,
\begin{equation}\label{ub-eq1}
\begin{aligned}
\probsub{F_1}{p_1}&\ge \prod_{i=1}^{c\log N} \left(1-(1-p_1)^{cD_Ni}\right)\\
&\ge \prod_{i=1}^\infty \left(1-e^{-p_1cD_Ni}\right)\\
&\ge \exp\left(\int_{0}^\infty\log(1-e^{-p_1cD_Nx})\, dx \right)\\
&= \exp\left(-\frac{\pi^2}{6c}\cdot \frac{1}{p_1D_N} \right)\\
&= N^{-\frac{\pi^2}{6c\mu_1}}.
\end{aligned}
\end{equation}

Now assume that $p_2=\mu_2/(D_N\log N)$, for $\mu_2=2C/c$.
Connect each pair of vertices with a {\it green\/} edge independently with probability $p_2$.
Then declare each vertex in $V_\ell$ to be {\it open\/} if it has a green edge to at least
one of the vertices in the largest internally solved subset of $V_\ell$ containing $w_\ell$
in the independent people graph with the $\Gppl$-edge probability $p_1$. If $F_1$ happens,
the probability that a vertex is open is, since $C/D_N\le 0.5$, 
at least $C/D_N$ independently of other vertices,
and (R2) applies.  Let $F_2$ be the event  that $w_\ell$ is included in an
internally solved set within $G_\ell$ of size $2D_N(\log N)^2$.
By (R2), (\ref{ub-eq1}) and Lemma~\ref{dividing-up},
\begin{equation}\label{ub-eq2}
\probsub{F_2}{p_1+p_2}\ge \alpha N^{-\frac{\pi^2}{6c\mu_1}},
\end{equation}
for some constant $\alpha>0$. Therefore, by (R1) and (\ref{ub-eq2}),
\begin{equation}\label{ub-eq3}
\begin{aligned}
&\probsub{\text{there is an internally solved set of size $2D_N(\log N)^2$}}{p_1+p_2}\\
&\ge 1- \left(1- \alpha N^{-\frac{\pi^2}{6c\mu_1}}\right)^{N^c}\\
&\ge 1-\exp(-\alpha N^{c-\frac{2}{c\mu_1}})\\
&=1-\exp(-\alpha N^{c/3}).
\end{aligned}
\end{equation}
Now let $p_3=1/(D_N\log N)$. 
If a fixed 
set $V_0$ of vertices has size at least $2D_N(\log N)^2$, then by 
Lemma~\ref{unstoppable-size}
\begin{equation}\label{ub-eq4}
\begin{aligned}
\probsub{\text{$V_0$ is unstoppable}}{p_3}
\ge 1-\frac 1N. 
\end{aligned}
\end{equation}
From Lemmas~\ref{unstoppable} and ~\ref{dividing-up}, and (\ref{ub-eq3}) and (\ref{ub-eq4}), it follows that 
$$
\probsub{\solve}{p_1+p_2+p_3}\ge \left(1-\frac 1N\right)\cdot \left(1-\exp(-N^{c/3})\right),
$$
and the result holds with $\mu\ge\mu_1+\mu_2+1$. 
\end{proof}

In a vertex-transitive graph, $D_N$ will typically be proportional to the degree. We now apply the
above theorem to some famous graphs.  In the corollaries that follow, note that $n$ is 
the natural parameter in the description of a family of graphs, and is
not equal to the total number of vertices.

\begin{cor} 
If $\Gpuz$ is the $d$-dimensional lattice torus with $V=\bZ_n^d$,
there exists a universal constant $C$ so that $p\ge C/(d^2\log n)$ 
implies $\probsub{\solve}{p} \to 1$.
\end{cor}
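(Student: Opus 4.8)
The plan is to obtain the corollary as an instance of Theorem~\ref{ub-general}: all that is needed is to check that the family of tori $\{\bZ_n^d\}$ is $D_N$-regular with $D_N=2d$ (the common vertex degree). Since $N=n^d$, one has $\log N=d\log n$, so a threshold of the form $p=\mu/(D_N\log N)$ is of order $1/(d^2\log n)$; and because $\solve$ is an increasing event in the $\Gppl$-edges, the conclusion $\probsub{\solve}{p}\to1$ propagates to all $p\ge C/(d^2\log n)$. For $d$ below a fixed threshold $d_0$ the corollary is already contained in Theorem 2 of \cite{BCDS}, which yields $\probsub{\solve}{p}\to1$ for $p\ge C'/\log N=C'/(d\log n)$, hence a fortiori for $p\ge(C'd_0)/(d^2\log n)$ on this range; so I would assume $d\ge d_0$, which also makes $D_N=2d\ge2C$ for the constant $C$ appearing in (R2).

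Next I would fix a small constant $c$ (say $c=\tfrac14$), tile $\bZ_n^d$ into $\asymp N^c$ pairwise disjoint axis-parallel sub-boxes $V_\ell$, each a copy of the $d$-cube of side $\asymp n^{1-c}$, and take $w_\ell$ to be the center of $V_\ell$ and $G_\ell=\Gpuz^{V_\ell}$. To verify (R2): site percolation on $G_\ell$ with parameter $C/D_N=C/(2d)$ is supercritical once $C$ exceeds a universal constant, since $p_c^{\mathrm{site}}(\bZ^d)$ is of order $1/d$ (in fact $\tfrac{1}{2d}(1+o(1))$). A standard supercritical-percolation estimate --- a uniform lower bound on the probability that a fixed vertex's cluster reaches $\ell^\infty$-distance $R$ while staying inside a box of side $\ge4R$ --- together with the fact that the box side $n^{1-c}$ eventually dominates $2D_N(\log N)^2\asymp d^3\log^2 n$, gives a constant $\gamma>0$ with $\prob{\size(G_\ell,w_\ell,C/D_N)\ge2D_N(\log N)^2}\ge\gamma$ uniformly in $\ell$ and in all large $n$, which is (R2).

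For (R1) I would use the simplest admissible sequence inside each box: a monotone self-avoiding path. Set $v_0=w_\ell$, and given $(v_0,\dots,v_{i-1})$ let $\step_i(v_0,\dots,v_{i-1})$ be the $d$ forward neighbors $v_{i-1}+e_1,\dots,v_{i-1}+e_d$ of the current endpoint, ordered by coordinate index. Then $|B_i|=d\ge cD_N$ (as $c\le\tfrac12$), the set $\{v_0,\dots,v_i\}$ is a path and hence $\Gpuz$-connected, and the independence condition (\ref{building-independence}) holds automatically: the coordinate sum of $v_j$ exceeds that of $v_0$ by exactly $j$, so an inspected vertex $v_{j-1}+e_s$ exceeds $v_0$ by $j$ while each vertex of $B_i$ exceeds $v_0$ by $i>j$, and the two sets are disjoint. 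A path of length $c\log N=cd\log n$ stays within $\ell^\infty$-distance $cd\log n\ll n^{1-c}$ of $w_\ell$, hence inside $V_\ell$. With (R1) and (R2) in hand, Theorem~\ref{ub-general} gives a universal $\mu$ such that $p=\mu/(2d\cdot d\log n)$ already forces $\probsub{\solve}{p}\to1$, and monotonicity yields the corollary with $C=\mu/2$.

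The one delicate point --- and the main obstacle to a fully detailed argument --- is to confirm that, under AE dynamics, growing the cluster one vertex at a time along the path keeps the set internally solved with the per-step probability $1-(1-p)^{\Theta(i)}$ used in the proof of Theorem~\ref{ub-general}, and not the much weaker $1-(1-p)^{\Theta(1)}$ that the doubly-connected rule alone might suggest. The resolution is that AE is the parameter choice $\sigma=\tau=1$, so by (J3) a singleton vertex $v$ merges with a large internally solved cluster as soon as $v$ has \emph{some} $\Gppl$-neighbor and \emph{some} $\Gpuz$-neighbor in that cluster: the $\Gpuz$-neighbor is guaranteed by the choice of $\step_i$, and the $\Gppl$-neighbor is present with probability $1-(1-p)^i$ since the cluster has $i$ vertices. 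Hence the product bound in Theorem~\ref{ub-general} genuinely beats the $N^c$-fold union, exactly as for the basic dynamics in \cite{BCDS}.
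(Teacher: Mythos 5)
Your proposal is correct and follows essentially the same route as the paper: tile the torus into polynomially many disjoint sub-boxes, use oriented (monotone) lattice paths with successor sets $\{v_{i-1}+e_1,\dots,v_{i-1}+e_d\}$ to verify (R1), invoke the $1/(2d)$ scaling of the site-percolation threshold on $\bZ^d$ for (R2), and apply Theorem~\ref{ub-general}. The only differences are bookkeeping (your box side $n^{3/4}$ versus the paper's $\sqrt n$, and your explicit fallback to Theorem 2 of \cite{BCDS} for small $d$, which is a reasonable extra precaution).
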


\begin{proof} In this, and subsequent, proofs we will omit the obvious integer parts
required to make certain quantities integers.  
In the torus, find $n^{d/2}$ disjoint subcubes 
congruent to $[1,\sqrt n]^d$. In each of these subcubes, consider the set 
of oriented percolation paths, which is clearly $d$-admissible: 
$B_i$ only depends on $v_{i-1}$ and is the set
$\{v_{i-1}+e_1,\ldots, v_{i-1}+e_d\}$ (where $e_j$ are the standard basis vectors).
The order is immaterial, as (\ref{building-independence}) holds
with all $\overleftarrow B_j^{v_{j}}$ replaced by $B_j$.
Then (R1) holds with $c=1$, provided $\sqrt n\ge 2d\log n$. 
To verify (R2), 
use the well-known fact that the critical 
probability of site percolation on $\bZ^d$ scales as $1/(2d)$ \cite{Kes2}. 
\end{proof}

\begin{cor} \label{Cor:Z2 long range}
If $\Gpuz$ is the graph with vertices $V=\bZ_n^2$ and edges between all pairs of vertices $x$ and $y$ such that $||x-y||_\infty\le r$, there exists an universal constant $C$ so that 
$P\ge C/(r^2\log n)$ implies 
$\probsub{\solve}{p} \to 1$.
\end{cor}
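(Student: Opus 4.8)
The plan is to reduce to Theorem~\ref{ub-general} by exhibiting the range-$r$ graph on $\bZ_n^2$ as a $D_N$-regular family with $D_N$ proportional to $r^2$ (which is, up to constants, the degree of each vertex). Here the natural ``macroscopic'' parameter is $n$, while the number of vertices is $N=n^2$, so $\log N$ and $\log n$ differ only by a factor of $2$ and the stated conclusion $p\ge C/(r^2\log n)$ is equivalent to $p\ge \mu/(D_N\log N)$ for a suitable large $\mu$. The two things to check are (R1): each piece contains a $cD_N$-admissible building sequence of length $\ge c\log N$; and (R2): the site-percolation cluster of the base point, at vertex probability $C/D_N$, reaches size $2D_N(\log N)^2$ with probability bounded below.

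First I would carve $\bZ_n^2$ into $\asymp n$ disjoint squares of side length $\asymp\sqrt n$ (so $N^c$ pieces with $c<1$); each will play the role of a $V_\ell$, with base point $w_\ell$ a corner. Inside one such square I would build an admissible sequence exactly as in the torus corollary but using the long-range structure: from $(v_0,\dots,v_{i-1})$ let $\step_i$ be the set of $\asymp r^2$ vertices $y$ with $\|y-v_{i-1}\|_\infty\le r$ that lie strictly ``ahead'' in, say, the lexicographic direction $(1,1)$ — concretely $y=v_{i-1}+z$ with $z$ in the quadrant $\{z_1>0\}\cup\{z_1=0,z_2>0\}$ and $\|z\|_\infty\le r$. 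Since $\step_i$ depends only on $v_{i-1}$ and always advances the lexicographic coordinate, the sets $B_i$ are pairwise disjoint as vertex sets, so (\ref{building-independence}) holds trivially (with each $\overleftarrow B_j^{v_j}$ replaced by all of $B_j$), $\{v_0,\dots,v_i\}$ is connected in the range-$r$ graph, and $|B_i|\ge a$ with $a\asymp r^2$. This walk fits inside a square of side $\sqrt n$ for at least $c\sqrt n/r \ge c\log N$ steps once $\sqrt n\ge (\text{const})\,r\log n$, which we may assume (for smaller ranges the statement is weaker than what we prove, and for $r$ comparable to $n$ one reduces to small $n$ by monotonicity, or the bound is vacuous); so (R1) holds with $D_N\asymp r^2$.

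For (R2) the point is that site percolation on the range-$r$ graph on $\bZ^2$ has critical probability $\asymp 1/r^2$; this is standard (a renormalization/coarse-graining argument comparing $r\times r$ blocks to nearest-neighbor percolation on $\bZ^2$, as in \cite{Kes2}). Hence for the constant $C$ in Theorem~\ref{ub-general} chosen large enough, $C/D_N = C/(\text{const}\cdot r^2)$ is supercritical, the cluster of $w_\ell$ is infinite (in the $\bZ^2$ version) with positive probability, and in the finite square $G_\ell$ of side $\sqrt n$ this cluster exceeds $2D_N(\log N)^2 \asymp r^2(\log n)^2$ with probability bounded away from $0$ uniformly in $\ell$ and $N$ — again valid in the regime $\sqrt n \gg r\log n$ covered above. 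This gives (R2), so the range-$r$ graph is $D_N$-regular with $D_N\asymp r^2$, and Theorem~\ref{ub-general} yields $\probsub{\solve}{p}\to 1$ whenever $p\ge \mu/(D_N\log N)\asymp \mu/(r^2\log n)$, as claimed.

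The main obstacle is (R2): making the comparison between range-$r$ site percolation and ordinary $\bZ^2$ site percolation quantitative enough that the supercritical parameter $C/r^2$ and the required cluster size $r^2(\log n)^2$ are simultaneously under control, uniformly over the $\asymp n$ finite boxes. The cleanest route is to fix the comparison at the infinite-volume level — choose $C$ so that $C/(\text{const}\cdot r^2)$ exceeds the block-renormalized critical value, obtain a positive density of an infinite cluster, and then use a crossing/containment estimate in a box of side $\sqrt n\gg r\log n$ to transfer a lower bound on $\prob{\size(G_\ell,w_\ell,C/D_N)\ge 2D_N(\log N)^2}$; the admissibility bookkeeping in (R1) is routine once the lexicographic ordering is in place.
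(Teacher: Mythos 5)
Your overall strategy is exactly the paper's: tile $\bZ_n^2$ by $\sqrt n\times\sqrt n$ squares, build a directed $cD_N$-admissible sequence of length $c\log N$ inside each to verify (R1) with $D_N\asymp r^2$, invoke a site-percolation estimate for the range-$r$ graph to verify (R2), and apply Theorem~\ref{ub-general}. The one step that fails as written is your justification of (\ref{building-independence}): with $B_i=\{v_{i-1}+z:\ z\succ 0\text{ lexicographically},\ \|z\|_\infty\le r\}$ the sets $B_i$ are \emph{not} pairwise disjoint. For instance, if $v_i=v_{i-1}+(0,1)$, then $v_{i-1}+(1,0)$ lies in $B_i$ and also in $B_{i+1}$ (as $v_i+(1,-1)$), already for $r=1$; so the strengthened condition with $\overleftarrow B_j^{v_j}$ replaced by all of $B_j$ is false. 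The condition you actually need is rescued only by the ordering: if each $B_j$ is inspected in lexicographic order of position, then $\overleftarrow B_j^{v_j}$ consists of points lexicographically $\le v_j$, while every point of $B_i$ for $i>j$ is lexicographically $>v_{i-1}\ge v_j$ (the path is lex-increasing); that is the argument to write down. A second small hole: your half-plane steps allow the second coordinate to drop by up to $r$, so a walk started at the lower-left corner $w_\ell$ can exit $V_\ell$ through the bottom edge and the $B_i$ need not lie in $V_\ell$; you must intersect $B_i$ with the square and check that the intersection still has $\gtrsim r^2$ points. The paper's choice sidesteps both issues at once by taking $B_i$ to be the $(r+1)\times(r+1)$ square with lower-left corner $v_{i-1}$ (so both coordinates are non-decreasing and $x+y$ strictly increases) ordered by $x+y$ and then by $x$.

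For (R2) your renormalization sketch is the right idea and is at about the level of detail the paper itself offers; the reference the paper uses for this step is \cite{Gra}, not \cite{Kes2} (the latter is the high-dimensional result used for the torus $\bZ_n^d$). One quantitative point you should not wave away: reaching cluster size $2D_N(\log N)^2\asymp r^2(\log n)^2$ at site density $\asymp 1/r^2$ inside a box of $n$ vertices needs $n/r^2\gtrsim r^2(\log n)^2$, i.e.\ $\sqrt n\gtrsim r^2\log n$, which is stronger than the constraint $\sqrt n\ge Cr\log n$ you impose for (R1). Some restriction on how fast $r$ may grow with $n$ is genuinely required (the statement cannot hold for $r\asymp n$, where the graph is complete and $p_c\sim 2\log n/n^2\gg C/(r^2\log n)$), and your parenthetical claim that the bound becomes vacuous there is not accurate; this caveat is implicit in the paper's proof as well.
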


\begin{proof}
Divide $\bZ_n^2$ into $\sqrt n\times \sqrt n$ squares. In each, consider the set 
of neighborhood paths, which start at the lower left corner and are oriented 
(i.e, both coordinates are increasing along the paths). 
Here, $B_i$ is the $(r+1)\times (r+1)$ square with its leftmost lowest corner
at $v_{i-1}$, with $v_{i-1}$ excluded. Moreover, the ordering of points in $B_i$ is given as follows:
$(x_1,y_1)<(x_2,y_2)$ if
either $x_1+y_1<x_2+y_2$; or $x_1+y_1=x_2+y_2$ and $x_1<x_2$. Then (R1) holds 
provided $\sqrt n>2r\log n$. 
See \cite{Gra} for the relevant 
site percolation result to verify (R2).
\end{proof}

\begin{cor}
If $\Gpuz$ is the $n$-dimensional hypercube with $V=\{0,1\}^n$, 
there exists an universal constant $C$ so that 
$p\ge C/n^2$ implies  $\probsub{\solve}{p} \to 1$.
\end{cor}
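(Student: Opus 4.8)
The plan is to apply Theorem~\ref{ub-general} with $D_N$ of order $n$ (the degree of $Q_n$). Since $N=2^n$ we have $\log N = n\log 2$, so $p=\mu/(D_N\log N)$ with $D_N\asymp n$ is of order $1/n^2$, exactly the scale in the statement. The only real wrinkle in the setup is that Theorem~\ref{ub-general} needs $N^c$ \emph{disjoint} induced subgraphs, so one cannot work with $Q_n$ itself. I would therefore split off half the coordinates: set $m=\lceil n/2\rceil$, write $\{0,1\}^n=\{0,1\}^m\times\{0,1\}^{n-m}$, and for each $b\in\{0,1\}^{n-m}$ let $V_b=\{0,1\}^m\times\{b\}$. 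These are disjoint, there are $2^{n-m}\ge N^{c}$ of them for any $c\le \tfrac12$ once $n$ is large, and $\Gpuz^{V_b}$ is a copy of $Q_m$; take $D_N=m$ and $c=1/3$.

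For condition (R1), inside a copy of $Q_m$ (identify its all-zeros vertex with $w_b=(0^m,b)$) I would use the family $\cS_k$ of \emph{monotone} paths of length $k:=\lceil c\log N\rceil$ started at the origin, where $\step_i(v_0,\dots,v_{i-1})$ is the set $B_i$ of neighbours of $v_{i-1}$ obtained by flipping a coordinate from $0$ to $1$. Then each $\{v_0,\dots,v_i\}$ is a path, hence connected; $|B_i|=m-(i-1)\ge m-k+1\ge c\,m=cD_N$ for all $i\le k$ once $c$ is a small enough constant (the needed inequalities, $c(1+2\log 2)\le 1$ and $c\log 2<\tfrac12$, hold for $c=1/3$); and, crucially, the independence condition (\ref{building-independence}) holds \emph{for any ordering of the $B_i$}, because every vertex of $B_i$ has Hamming weight $i$ while $\bigcup_{j<i}B_j$ consists of vertices of weight $<i$, so $B_i$ is automatically disjoint from it.

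The hard part will be condition (R2). By vertex-transitivity of $Q_m$ all the probabilities appearing there are equal, so it suffices to show that, for a fixed vertex $w$ and a large absolute constant $C$, the cluster of $w$ in site percolation on $Q_m$ at density $C/m$ has size at least $2D_N(\log N)^2=2m(n\log 2)^2=O(n^3)$ with probability bounded away from $0$, uniformly in large $n$. This is by now standard but is the genuine input: one may invoke that the site-percolation threshold on $Q_m$ is $(1+o(1))/m$, so for $C$ large the density $C/m$ is supercritical and $w$ lies in a component of size $\Theta(2^m)=\Theta(N^{1/2})$ with probability bounded below, which dwarfs the required polylogarithmic size. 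Alternatively — and this is what I would actually write, to stay self-contained — the breadth-first exploration of the cluster of $w$ dominates, until it dies or reaches size $n^3$, a Galton–Watson process with offspring $\Bin(m-1,C/m)$; the only discrepancy comes from short cycles of $Q_m$ met during the exploration, and the expected number of these before the cluster reaches size $n^3$ is $O(n^{3\cdot 2}\cdot \binom m2/2^m)$, which is exponentially small in $n$. Since $\Bin(m-1,C/m)$ has mean $\to C>1$, this Galton–Watson process reaches size $n^3$ with probability bounded below, giving (R2).

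Finally I would conclude: $Q_n$ is $D_N$-regular with $D_N=m\le n$, so Theorem~\ref{ub-general} yields $\probsub{\solve}{p}\to 1$ for $p=\mu/(m\,n\log 2)$ with $\mu$ a large absolute constant. Because $m\ge n/2$, this $p$ is at most $2\mu/(n^2\log 2)$, and $\solve$ is an increasing event in the $\Gppl$-edge set (adding people-edges only adds edges to every $\cG_t$), so $\probsub{\solve}{p}\to 1$ for every $p\ge C/n^2$ with the universal constant $C=2\mu/\log 2$.
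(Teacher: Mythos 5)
Your proposal follows essentially the same route as the paper's proof: the paper likewise splits the cube into exponentially many disjoint subcubes of linear dimension (it uses $2^{n/4}$ subcubes of dimension $3n/4$ rather than your $2^{n-m}$ of dimension $m=\lceil n/2\rceil$; either choice works), uses monotone paths away from the starting vertex as the $cD_N$-admissible family for (R1) (your weight-stratification remark is exactly why the ordering is immaterial), and disposes of (R2) by citing the hypercube percolation result of Bollob\'as, Kohayakawa and \L uczak --- which is your first suggested route. Your parameter checks for (R1), the count $2^{n-m}\ge N^{1/3}$, and the closing monotonicity step are all fine.

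The one step that would fail as written is the ``self-contained'' version of (R2) that you say you would actually use. The estimate $O\bigl(n^{6}\binom{m}{2}/2^m\bigr)$ for the expected number of short-cycle collisions treats the explored vertices as independent uniform points of $Q_m$, which they are not: they form a connected set, so, for instance, every vertex of the exploration tree is at Hamming distance $2$ from its grandparent and from each of its siblings, and the number of distance-$2$ pairs among the first $n^3$ explored vertices is at least of order $n^3$, not exponentially small. What a correct elementary argument must show is that the number of queries \emph{lost} by each newly processed vertex to already-queried neighbours is $o(m)$ with high probability (losses to grandparents and siblings are $O(1)$ per vertex, and one must bound the probability that the exploration tree folds back on itself at larger tree-distance), so that the effective offspring mean stays above $1$. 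That bookkeeping is standard but not one line --- it is essentially the content of the cited percolation results --- so either carry it out or simply invoke the known threshold, as the paper does.
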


\begin{proof}
Let $d$ be the Hamming distance, and divide the graph into $2^{n/4}$ disjoint $(3n/4)$-dimensional subcubes.
In each subcube we find a $(n/4)$-admissible set of length $n/4$ by letting $B_i$ be the set of hypercube-neighbors $w$ of $v_{i-1}$ that have
$d(w,v_{0})>d(v_{i-1},v_0)$, and the order is immaterial.
To verify (R2), use the percolation result from \cite{BKL}.  
\end{proof}

To prove our Hamming torus result in low dimensions, we need a lemma on connectivity 
of high-density random subsets. 

\begin{lemma}\label{hamming-2d-connectivity}
Assume that every vertex of the two-dimensional Hamming torus 
with vertex set $V=[0,n-1]^2$ is open independently with 
probability that may vary among vertices but is bounded below by 
$n^{-\gamma}$ for some $\gamma<2/3$. Then, with probability
approaching $1$, for each pair $x,y\in V$ there exist open vertices
$z_1,z_2,z_3$ so that $z_1$ is a neighbor of $x$ and of $z_3$, and 
$z_2$ is a neighbor of $y$ and of $z_3$. Furthermore, 
a.~a.~s.~all open vertices form a connected set of size 
at least $0.5n^{2-\gamma}$. 
\end{lemma}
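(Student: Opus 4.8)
The structural fact to exploit is that two \emph{open} vertices of the Hamming torus are adjacent precisely when they share a row or a column; equivalently, if $H$ denotes the random bipartite graph on the $n$ rows and the $n$ columns in which row $b$ is joined to column $a$ exactly when $(a,b)$ is open, then the subgraph of the torus induced on the open vertices is the line graph of $H$. Thus both assertions concern short paths through rows and columns, and I will argue directly in those terms. First I would dispose of the soft parts by Chernoff bounds: the number of open vertices has mean at least $n^{2-\gamma}$, hence is a.~a.~s.\ at least $0.5\,n^{2-\gamma}$; likewise, for any fixed row or column the number of its open vertices has mean at least $n^{1-\gamma}$, and since $\gamma<1$ a union bound over the $2n$ lines shows that a.~a.~s.\ every row and every column contains at least $\tfrac12 n^{1-\gamma}$ open vertices. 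Call this event $\cE$.

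Now fix $x=(x_1,x_2)$ and $y=(y_1,y_2)$, the case $x=y$ being trivial. On $\cE$, the column set $\cC=\{u:(u,x_2)\text{ is open}\}$ and the row set $\cR=\{v:(y_1,v)\text{ is open}\}$ each have size at least $\tfrac12 n^{1-\gamma}$. Let $S$ be the set of positions $(u,v)$ with $u\in\cC\setminus\{x_1,y_1\}$ and $v\in\cR\setminus\{x_2,y_2\}$; since $|\cC|,|\cR|\to\infty$, we have $|S|\ge\tfrac{1}{16}n^{2-2\gamma}$ once $n$ is large. The key point is that every vertex of $S$ lies off rows $x_2,y_2$ and columns $x_1,y_1$, so, conditionally on the states of all vertices in those four lines --- which already determine $\cC$, $\cR$, and the part of $\cE$ concerning these lines --- the states of the vertices of $S$ remain independent, each open with probability at least $n^{-\gamma}$. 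Hence the conditional probability that no vertex of $S$ is open is at most $(1-n^{-\gamma})^{|S|}\le\exp(-\tfrac{1}{16}n^{2-3\gamma})$, and since $\gamma<2/3$ this survives a union bound over all $n^4$ pairs $(x,y)$. So a.~a.~s., for every $x$ and $y$ there is an open vertex $z_3=(u,v)\in S$; given one, set $z_1=(u,x_2)$ and $z_2=(y_1,v)$, both open by the definitions of $\cC$ and $\cR$. From $u\notin\{x_1,y_1\}$ and $v\notin\{x_2,y_2\}$ one checks at once that $z_1$ is a neighbor of $x$ and of $z_3$, that $z_2$ is a neighbor of $y$ and of $z_3$, and that $x,y,z_1,z_2,z_3$ are distinct as required. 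This is the first assertion; applying it to any two open vertices $x,y$ produces the open path $x,z_1,z_3,z_2,y$, so the open vertices induce a connected subgraph, which together with the size bound above yields the ``furthermore''.

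The step I expect to be the obstacle is the regime $\gamma\in[1/2,2/3)$: there a single row or column holds only about $n^{1-\gamma}\le\sqrt{n}$ open vertices, too few for a union bound to force two prescribed lines to meet in a common open vertex, so one cannot route the path ``one line at a time.'' The remedy is the one above --- look for the middle vertex $z_3$ inside a product set $S$ of about $n^{2-2\gamma}$ admissible positions, whose expected number of open members is about $n^{2-3\gamma}$, noting that $\gamma<2/3$ is exactly the condition making this quantity (and hence the union bound over the $n^4$ pairs) grow. The other delicate point is independence: the conditioning must be restricted to exactly the four reference lines of $x$ and $y$, with $S$ chosen to avoid them, so that the coin tosses deciding membership in $S$ are untouched.
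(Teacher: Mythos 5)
Your proof is correct and follows essentially the same route as the paper: first show (via binomial large deviations) that the lines through $x$ and $y$ each contain about $n^{1-\gamma}$ open vertices, then find the middle vertex $z_3$ among the roughly $n^{2-2\gamma}$ independent candidate positions, with $\gamma<2/3$ ensuring the expected count $n^{2-3\gamma}$ grows fast enough to survive the union bound over pairs. Your extra care in excluding the four reference lines from $S$ to preserve independence, and deducing connectivity by applying the first claim to pairs of open vertices, just makes explicit what the paper leaves as "easy."
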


\begin{proof} Fix any two vertices $x$ and $y$, and let $E$ be the event that
vertices $z_1,z_2,z_3$ with specified properties exist.
Let $E_1$ be the event that the
horizontal line through $x$ and the vertical line through $y$ both have 
at least $0.5 n^{1-\gamma}$ open vertices. Then, by Lemma~\ref{binomial-ld}, 
$$
\prob{E_1}\ge 1-2\exp(-n^{1-\gamma}/7),
$$  
Conditioned on $E_1$, there are at least $0.25 n^{2-2\gamma}$ independent candidates 
for an open vertex that is incident to open vertices in both neighborhoods of 
$x$ and $y$. As $\gamma<2/3$, by Lemma~\ref{binomial-ld}, 
$$
\prob{E|E_1}\ge 1-\exp(-0.03\cdot n^{2-3\gamma}),
$$
which easily finishes the proof of the first claim. The second claim is 
then another easy application of Lemma~\ref{binomial-ld}. 
\end{proof}

\begin{cor}
If $\Gpuz$ is the $d$-dimensional Hamming torus on the vertex set $\bZ_n^d$,
there exists a universal constant $C$ so that 
$p\ge C/(d^2n\log n)$ implies $\probsub{\solve}{p} \to 1$.
\end{cor}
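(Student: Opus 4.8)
The plan is to reduce, by monotonicity of $\solve$ in $p$, to the case $p=C_0/(d^2n\log n)$ for a large universal constant $C_0$, and then to handle $d\ge 4$ via Theorem~\ref{ub-general} and the small cases $d\in\{2,3\}$ by a direct nucleation argument built on Lemma~\ref{hamming-2d-connectivity}. (The cutoff arises because a Hamming sub‑torus used as a building block must have dimension at least $3$ for site percolation at density of order $1/(\text{degree})$ to produce a giant component much larger than $n$, which is what property (R2) demands.)

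For $d\ge4$ I would apply Theorem~\ref{ub-general} with $d'=\max(3,\lceil d/2\rceil)\ge 3$ and $D_N=d'(n-1)$; then $\log N=d\log n$ and the target $p$ equals $\mu/(D_N\log N)$ with $\mu$ of order $C_0$, so $\mu$ is large once $C_0$ is. Take the $V_\ell$ to be the $n^{d-d'}$ translates of $\{0,\dots,n-1\}^{d'}\times\{x\}$, $x\in\{0,\dots,n-1\}^{d-d'}$; these are disjoint, each $\Gpuz^{V_\ell}\cong\bZ_n^{d'}$ (each line in a free coordinate is a full clique), $d-d'\ge 1$ and $(d-d')/d\ge 1/4$, giving $N^c$ sets with a universal $c$. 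The work is in (R1), and I would build the admissible sequence greedily by the following \emph{fresh‑coordinate rule}: put $v_0=w_\ell$, and with $R_{i-1}=\{v_0,\dots,v_{i-1}\}$ let $S_i$ be the set of all vertices obtained from some $v\in R_{i-1}$ by changing one coordinate to a value attained by no vertex of $R_{i-1}$ in that coordinate, and take $B_i$ to be the first $\lceil cD_N\rceil$ vertices of $S_i\setminus\bigcup_{j<i}\overleftarrow B_j^{v_j}$ in lexicographic order (which also orders $B_i$). Connectedness and (\ref{building-independence}) then hold by construction, so the only thing to check is that $|S_i\setminus\bigcup_{j<i}\overleftarrow B_j^{v_j}|\ge\lceil cD_N\rceil$. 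This is a counting estimate: the fresh‑coordinate rule forces each $v_i$ to project to a new value in each of the $d'-1$ unchanged coordinates, so the total number of distinct coordinate lines met by $R_{i-1}$ grows like $(d'-1)(i-1)$, whence $|S_i|$ is of order $(d'-1)(i-1)n$; meanwhile $|\bigcup_{j<i}\overleftarrow B_j^{v_j}|\le (i-1)\lceil cD_N\rceil$ is only of order $c\,d'(i-1)n$, so for a small universal $c$ the first outgrows the second and at least $cD_N$ vertices survive. One runs this for length $c\log N=cd\log n\ll n$, so the estimate holds throughout, and the resulting set is internally solved since each $v_i$ may be merged with the current cluster (a people‑edge into it is found by the algorithm, a puzzle‑edge because $v_i\in S_i\subset\partial_o(R_{i-1})$). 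For (R2), site percolation on $\bZ_n^{d'}$ with vertex probability $C/D_N$ has mean offspring $d'(n-1)\cdot C/D_N=C$, so for large $C$ the cluster of $w_\ell$ dominates a supercritical Galton–Watson tree with Binomial offspring until it has explored only a vanishing fraction (below order $n^{d'-1}$) of the $n^{d'}$ vertices; since $d'\ge3$ this range includes $2D_N(\log N)^2=o(n^{d'-1})$, so the cluster exceeds $2D_N(\log N)^2$ with probability bounded below. Theorem~\ref{ub-general} then gives $\probsub{\solve}{p}\to1$.

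The cases $d=2,3$ do not fit this scheme, since any subgraph of $\bZ_n^d$ of degree comparable to $D_N\asymp n$ is essentially a $2$‑dimensional sub‑torus, and on such a piece site percolation at density $\Theta(1/n)$ has only $\Theta(n)$ open vertices—too few for (R2). Here I would argue directly inside one $2$‑dimensional sub‑torus $\bZ_n^2\subset\bZ_n^d$. Split $p=p_1+p_2+p_3$ with each $p_i$ of order $1/(n\log n)$ (Lemma~\ref{dividing-up}). Using $p_1$, call a vertex \emph{favorable} if it has a $p_1$‑edge to another vertex of its row; favorable vertices have density of order $1/\log n$, which exceeds $n^{-\gamma}$ for any fixed $\gamma\in(0,2/3)$ once $n$ is large, so Lemma~\ref{hamming-2d-connectivity} yields, a.~a.~s., a $\Gpuz$‑connected set of favorable vertices of size of order $n^2/\log n\gg n(\log n)^2$, together with uniform three‑step connections between all vertices through favorable ones. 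Using the $p_2$‑edges to realize the people‑connections along a spanning structure of this set, one assembles the favorable vertices (with their row‑partners) into a single internally solved set $A$ with $|A|\gg n(\log n)^2\ge\alpha\log N/p_3$ for some $\alpha>1$; by Lemma~\ref{unstoppable-size}, $A$ is then unstoppable for the $p_3$‑graph in all of $\bZ_n^d$, so $\solve$ follows from Lemma~\ref{unstoppable}, whence $\probsub{\solve}{p}\to1$.

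I expect the obstacles to be twofold. For $d\ge4$ it is the boundary estimate in (R1): the greedily built cluster must be kept from collapsing to a compact sub‑box, whose outer boundary grows only sublinearly in its size and would be entirely swamped by the already‑inspected vertices; the fresh‑coordinate rule is exactly what forces the boundary to grow linearly in $|R|$ and thus outrun $\bigcup_{j<i}\overleftarrow B_j^{v_j}$. For $d=2,3$ it is the direct nucleation: turning the purely graph‑theoretic connectivity output of Lemma~\ref{hamming-2d-connectivity} into a set that the jigsaw dynamics genuinely merges requires routing the $p_2$ people‑edges carefully, and this is precisely the point at which the two‑dimensional connectivity lemma is needed.
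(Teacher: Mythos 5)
Your overall architecture (Theorem~\ref{ub-general} for $d\ge 4$ via sub-tori of dimension at least $3$, a separate two-dimensional nucleation argument for $d=2,3$ built on Lemma~\ref{hamming-2d-connectivity}) matches the paper's, and your (R1) construction is a workable variant of the paper's ``avoid all previous neighborhoods'' rule. But there are two genuine gaps.

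The serious one is the $d=2,3$ case. Your ``favorable'' vertices come in doubly connected pairs $\{v,v'\}$, so after the first time step you have roughly $n^2/\log n$ clusters of size $2$. To ``realize the people-connections along a spanning structure'' you must, for a positive fraction of the spanning-tree edges, find a $p_2$-edge between two \emph{specific size-$2$ clusters}; each such edge is present with probability $O(p_2)=O(1/(n\log n))$, so the probability that the merging goes through is super-exponentially small. No union over spanning trees rescues this. The missing idea is nucleation: one must first build a \emph{single} internally solved cluster of polynomial size (the paper grows a chain of length $n^{0.7}$ using only $p_1$, restarting from fresh ``base'' vertices when stuck, so that each extension needs a people-edge to the whole current cluster rather than to a bounded set), and only then does a $p_2$-edge from a given vertex into that cluster have non-negligible probability ($\ge n^{-1/2}$), which is what feeds Lemma~\ref{hamming-2d-connectivity}. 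A smaller issue in the same paragraph: ``$v$ has a $p_1$-edge to another vertex of its row'' is not independent across vertices of a common row, whereas Lemma~\ref{hamming-2d-connectivity} assumes independent openness.

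The second gap is your verification of (R2) for $d\ge 4$. The Galton--Watson domination for site percolation on a Hamming torus is only valid while the explored set has size $o(\mathrm{degree})=o(d'n)$: when you reveal the neighbors of the current vertex, up to $m$ of them may already have been touched, where $m$ is the \emph{total} number of explored vertices (they could all lie on one line through the current vertex), so the offspring law only dominates $\Bin(d'(n-1)-m,\,C/D_N)$. Since (R2) demands a cluster of size $2D_N(\log N)^2\gg D_N$, the naive domination stops a factor $(\log N)^2$ short of the target. This is why the paper invokes Theorem~1.2 of \cite{Siv} for the giant component of site percolation on $\bZ_n^{d'}$, $d'\ge 3$, being of order $n^{d'-1}$; you need either that result or a genuinely more careful (e.g.\ sprinkled, two-round) exploration argument.
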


\begin{proof}
Assume first that $d\ge 4$. Let $d_1=\lfloor d/2\rfloor - 1$. For any $d_1$-tuple
$a=(a_1,\ldots a_{d_1})$, let $M_a$ be the set of vertices whose last
$d_1$ coordinates equal $a$. There are $n^{d_1}$ disjoint sets $M_a$, 
each of which is a Hamming torus of dimension $d-d_1\ge 3$. In each of these tori, 
$\step_i(v_0,\ldots, v_{i-1})$ 
comprises vertices that are in the neighborhood of $v_{i-1}$, but not in the 
neighborhood of any of the previous points, $v_0, \ldots, v_{i-2}$. This defines a $dn/4$-admissible set of length $d\log n$;
the order is again immaterial. This verifies (R1). Theorem 1.2 
from \cite{Siv} implies that the giant component in $M_a$ is on the order of $n^{d-d_1-1} \gg D_N (\log N)^2 = d^3 n (\log n)^2$, which implies (R2).

Theorem~\ref{ub-general} thus handles the case $d\ge 4$. The cases 
$d=2, 3$ require a modified argument that we now present.  For $d=2$ consider the entire puzzle graph, and for $d=3$ consider a fixed two-dimensional subgraph.  Assume first that  
the $\Gppl$ probability is $p_1=\mu_1/(n\log n)$. 
We will 
describe a sequence $(z_i)$ of vertices, divided into {\it ordinary\/} and 
{\it base\/} vertices. 
The sequence starts with an arbitrary $z_0\in V$, a base vertex. 
Given vertices $z_0,\ldots, z_{i-1}$, let $z_j$, be the base vertex 
with the largest index $j<i$. Inspect one by one all vertices in the neighborhood of
$z_{i-1}$ which are not in the neighborhood of any previous vertices, $z_0, \ldots, z_{i-2}$, 
until either: 
\begin{itemize}
\item a vertex that is $\Gppl$-connected to one of
the vertices $z_j,\ldots, z_{i-1}$ is found, which is then declared an ordinary vertex 
$z_i$; or 
\item all vertices are exhausted, in which case $z_i$ is a new base vertex, 
selected arbitrarily
outside of the neighborhoods of vertices $z_0,\ldots, z_{i-1}$.
\end{itemize} 
We continue this construction until we either: encounter a subsequence of $n^{0.7}$ 
consecutive ordinary vertices, 
in which case we call the sequence {\it successful\/}; or the sequence reaches 
length $n^{0.9}$. By construction, the number of vertices available 
for inspection is always at least $n-o(n)$. Thus, conditioned on any outcome of prior inspections,  
a new base vertex is
the last base vertex with probability at least $n^{-0.1}$  for a large enough 
$\mu_1$, by a calculation similar to (\ref{ub-eq1}). The number of base vertices 
in an unsuccessful sequence is at least $n^{0.2}$ and so 
\begin{equation}\label{hamming-eq1}
\begin{aligned}
&\probsub{\text{there is an internally solved set of size $n^{0.7}$}}{p_1}\\
&\ge 
\probsub{\text{sequence successful}}{p_1}\ge 1- \exp(-n^{0.1}).
\end{aligned} 
\end{equation}
Now let $p_2=1/(n\log n)$, and as in the proof of Theorem~\ref{ub-general} 
assume that each pair of vertices is 
connected by a green edge with probability $p_2$, and then declare a vertex open if it 
is connected by a green edge to the largest internally solved set at edge density $p_1$. 
On the event that the largest $p_1$-internally solved cluster has size at least $n^{0.7}$, the $p_2$-probability of a fixed vertex being open is at least 
$$
\left(1-(1-p_2)^{n^{0.7}}\right)>n^{-1/2},
$$
independently of other vertices. By (\ref{hamming-eq1}) and
Lemma~\ref{hamming-2d-connectivity}, 
$$
\probsub{\text{there is an internally solved set of size $0.5n^{3/2}$}}{p_1+p_2} \to 1.
$$
As $0.5 n^{3/2}\gg n(\log n)^2$, the proofs for both $d=2$ and $d=3$ are easily concluded using the unstoppability Lemma~\ref{unstoppable-size}
and additional density $p_3 = 1/(n \log n)$,
as at the end of the proof of Theorem~\ref{ub-general}.
\end{proof}

As we see from the above examples, for many vertex-transitive graphs of $N$ vertices and degree
$D$, $p_c$ scales as $1/(D\log N)$. This is however not always true. 
The easiest counterexample is the 
complete graph $K_n$ where $p=\mu/(n\log n)$ yields a disconnected 
graph $\Gppl$, and in fact $p_c\sim \log n/n$, as observed in \cite{BCDS}. We now 
show by an example that 
this scaling may fail to hold even if
$\Gppl$ is connected. 

\begin{prop}\label{ub-counterexample}
Consider the Cartesian product graph $\Gpuz=K_{n}\times R_{(\log n)^3}$ of a complete
graph $K_n$ (of $n$ vertices) and a ring graph $R_{(\log n)^3}$
(of $(\log n)^3$ vertices), and
$p=\mu/(n\log n)$ for some constant $\mu>0$. Then $\Gppl$ is a.~a.~s.~connected, but
$\probsub{\solve}{p}\to 0$.
\end{prop}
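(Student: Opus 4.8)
The plan is to establish the two assertions separately. The connectivity of $\Gppl$ is immediate from classical theory: with $N=n(\log n)^3$ vertices, $\Gppl$ is Erd\H{o}s--R\'enyi with $Np=\mu(\log n)^2$, so $Np-\log N=\mu(\log n)^2-\log n-3\log\log n\to\infty$, whence $\Gppl$ is a.a.s.\ connected.

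For $\probsub{\solve}{p}\to 0$ the plan is to combine Lemma~\ref{is-double} with a counting argument tuned to the ``thin'' geometry of $\Gpuz=K_n\times R_\ell$, where $\ell=(\log n)^3$. Set $k=\lceil 3\log n/\log\log n\rceil$. On the event $\solve$, Lemma~\ref{is-double} produces a $\Gpuz$-connected set $A$ with $|A|\in[k,2k]$ for which $\solve_A$ holds; and, exactly as in the proof of Lemma~\ref{solveA-lem}, $\solve_A$ forces $\Gppl^A$ to be connected, so that $\probsub{\solve_A}{p}\le\probsub{\Gppl^A\text{ connected}}{p}\le |A|^{|A|-2}p^{|A|-1}$ by Cayley's formula and a union bound over spanning trees. (This is the only feature of the AE rule used, so the argument transfers verbatim to basic jigsaw percolation.) It therefore suffices to prove that $\sum_A |A|^{|A|-2}p^{|A|-1}\to 0$, the sum ranging over $\Gpuz$-connected $A$ with $|A|\in[k,2k]$.

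The heart of the proof is an enumeration of connected sets that beats the crude bound $(eD)^{|A|}=(en)^{|A|}$ of Lemma~\ref{clusters-lem}; this is where the product structure is used. Write $L_0,\dots,L_{\ell-1}$ for the layers, each a copy of $K_n\cong K_{[n]}$. Because every layer is a clique, a set $A$ is $\Gpuz$-connected if and only if its ring-projection $\pi(A)$ is an arc $\{a,a+1,\dots,a+m-1\}$ and, for each two consecutive layers in the arc, the slices $A\cap L_c$ and $A\cap L_{c+1}$ intersect as subsets of $[n]$. Since $|A|\le 2k\ll\ell$ the arc is proper, so $A$ is specified by: its leftmost layer $a$ ($\le\ell$ choices); the composition $(k_c)$ of $|A|$ into the $m$ layer-sizes ($\le 2^{|A|}$ choices); and the slices themselves, where the first slice gives $\binom n{k_a}$ choices and each later slice $S_c$ ($c>a$) gives at most $k_{c-1}\binom{n-1}{k_c-1}=\frac{k_{c-1}k_c}{n}\binom n{k_c}$ choices because it must meet the previous slice. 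Multiplying, and using $\prod_c\binom n{k_c}\le(en\,|\pi(A)|/|A|)^{|A|}$ together with $\prod_c k_c\le(|A|/m)^{m}$, the number of $\Gpuz$-connected $A$ with $|A|=k'$ meeting exactly $m$ layers is at most $\ell\, 2^{O(k')}\,(enm/k')^{k'}\,n^{-(m-1)}\,(k'/m)^{2m}$; the decisive new factor is $n^{-(m-1)}$.

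Finally the plan is to multiply this count by $k'^{\,k'-2}p^{\,k'-1}$, substitute $p=\mu/(n\log n)$, and sum over the (polynomially many in $\log n$) pairs $(k',m)$ with $k\le k'\le 2k$ and $1\le m\le k'$. Taking logarithms, the $(k',m)$-term has exponent $(2-m)\log n+(k'-1)\log\!\big(m/\log n\big)+O\!\big(k'+m\log k'\big)$. For $m\ge 3$ the term $(2-m)\log n\le-\log n$ dominates (note $m\le k'<\log n$, so $\log(m/\log n)<0$ only helps); for $m=2$ the factor $\log(2/\log n)$ renders the term $e^{-\Theta(k'\log\log n)}$; and for $m=1$ --- the binding case, where $A$ lies in a single copy of $K_n$ --- the exponent is $\log n-(k'-1)\log\log n+O(k')$, which is $-2\log n+o(\log n)\to-\infty$ precisely because $k'\ge k\ge 3\log n/\log\log n$ (the $O(k')$ term, which carries the only dependence on the constant $\mu$, is $o(\log n)$ for any fixed $\mu$). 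Hence every term, and thus the whole sum, tends to $0$, giving $\probsub{\solve}{p}\to 0$. The main obstacle throughout is the enumeration step in the previous paragraph: since $\Gpuz$ has degree of order $n$, the naive animal count is hopelessly lossy for a fixed $\mu$, and one must extract the gain coming from the intersection constraint between neighbouring layers --- equivalently, from the fact that the ``width'' of $\Gpuz$ in the ring direction is only polylogarithmic in $n$.
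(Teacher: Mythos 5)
Your proposal is correct and follows essentially the same route as the paper: reduce to an internally solved $\Gpuz$-connected set of intermediate size via Lemma~\ref{is-double}, bound $\probsub{\solve_A}{p}$ through the forced connectivity of $\Gppl^A$, and beat the naive $(eD)^{|A|}$ animal count by decomposing $A$ into clique layers along the ring and charging a factor $1/n$ for each required intersection between consecutive slices. The differences (Cayley's formula in place of the edge-count Chernoff bound, the scale $\log n/\log\log n$ in place of $\log n$, and organizing the sum by the number of layers $m$ rather than optimizing over it analytically) are cosmetic.
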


\begin{proof}
The first statement is clear as the threshold for $\Gppl$-connectivity scales
as $1/(n(\log n)^2)$. To prove the second statement, we find an upper bound
for the number $C(v,k)$ of
connected sets of size $k=\cO(\log n)$ that include a specific vertex $v$.

Divide the set of vertices into copies of $K_n$, denoted by $K'_i$, $i=1,\ldots, (\log n)^3$,
which are in cyclic order connected by the ring graph edges.
We will assume that the vertices in $K_i'$ have a prescribed order.
A connected set $A$, with $v\in A$, of size $k$ in $\Gpuz$ must be divisible into $\ell$
contiguous sets (on the ring) $K_{i_0+1}', \ldots, K_{i_0+\ell}'$, for some $\ell\in [1,k]$
and some $i_0$. Thus there exist $k_1,\ldots, k_\ell\ge 1$,
with $k_1+\ldots+k_\ell=k$, so that there are $k_i$ vertices in each
$A\cap K_{i_0+i}'$, $i=1,\ldots, \ell$.
We now fix a choice of $A$ recursively as follows.
Once the $k_i$ points in $K_{i_0+i}$ are chosen, we choose the first point
in the ordering that has a ring connection to a point in $A\cap K_{i_0+i+1}$.
This fixes one of $k_{i+1}$ points in $A\cap K_{i_0+i+1}$, which we call
the {\it base point\/},
and we have at most $\binom n {k_{i+1}-1}$ choices for the others.
We choose, say, the first point in the set $A\cap K_{i_0+1}$ as its base point.
This gives
\begin{equation}\label{ub-counterexample-eq1}
\begin{aligned}
C(v,k)&\le kn\sum_\ell\sum_{k_1,\ldots, k_\ell} k_1\cdots k_{\ell-1}
\binom n {k_{1}-1}\cdots \binom n {k_{\ell}-1}\\&
\le n \sum_{\ell=1}^k k^{\ell} \binom {n\ell} {k-\ell}
\le n\cdot\max_{1\le \ell\le k} k^{\ell+1} \binom{n\ell}{k-\ell}.
\end{aligned}
\end{equation}
We now use $\binom{n}{k}\le (en/k)^k$, so that from (\ref{ub-counterexample-eq1})
$$
C(v,k)\le n\exp((\ell+1)\log k+(k-\ell)(1+\log n+\log \ell-\log(k-\ell))),
$$
and the derivative of the expression inside $\exp$ with respect to $\ell$
is
$$
\log k+\log\left(\frac k\ell-1\right)+\frac k\ell -3 -\log n.
$$
This last expression is negative for all $n\geq e^2$, $\ell\ge 1$, and 
$1\leq k\le \frac 12\log n$.
Assuming this,
$$
C(v,k)\le \exp(k\log n-k\log k+\cO(\log n)).
$$
Now by Lemma~\ref{solveA-lem}, for an $A$ of size $|A|=\alpha\log n$,
$$
\probsub{\solve_A}{p}\le \exp\left(-\alpha (\log n)^2 +\cO(\log n)\right).
$$
When $\solve$ occurs, so does $\solve_A$ for some $A$ with
$A\in[\frac 14\log n, \frac 12\log n]$,
and then
$$
\probsub{\solve}{p}\le \exp\left(-\frac 14\log n\log\log n +\cO(\log n)\right),
$$
which ends the proof.
\end{proof}

\section{Ring puzzle: sharp transition}

In this section we assume that $\Gpuz$ is the ring graph $\bZ_n$ of $n$ vertices, that
$\tau=1$, $\theta=\infty$ and $\sigma\ge 1$ is arbitrary, and prove 
Theorems~\ref{intro-ring-thm} and~\ref{intro-final-time-thm}. 

\begin{lemma}\label{g-properties}
The function $g_\sigma$ is positive, decreasing, and convex on $(0,\infty)$. 
\end{lemma}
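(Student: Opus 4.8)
The plan is to set $P(x):=\prob{\Poisson(x)\ge\sigma}$, so that $g_\sigma=-\log P$ on $(0,\infty)$, and to deduce all three properties from elementary calculus on $P$. The basic identity is that, differentiating the tail sum $P(x)=\sum_{k\ge\sigma}e^{-x}x^k/k!$ term by term (legitimate, as it is a power series with locally uniform convergence), the contributions $-e^{-x}x^k/k!$ and $+e^{-x}x^{k-1}/(k-1)!$ telescope and leave
$$
P'(x)=\frac{x^{\sigma-1}e^{-x}}{(\sigma-1)!}>0,\qquad x>0.
$$
(Equivalently, this is the Poisson--Gamma relation $P(x)=\frac{1}{(\sigma-1)!}\int_0^x t^{\sigma-1}e^{-t}\,dt$.) Positivity of $g_\sigma$ is then immediate: $P(x)\le 1-\prob{\Poisson(x)=0}=1-e^{-x}<1$, so $g_\sigma(x)=-\log P(x)>0$; and since $P'>0$, $P$ is strictly increasing, hence $g_\sigma=-\log P$ is strictly decreasing.

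For convexity I would show that $P$ is log-concave, i.e.\ $(\log P)''=(P''P-(P')^2)/P^2<0$. Differentiating the formula for $P'$ gives
$$
P''(x)=\frac{x^{\sigma-2}e^{-x}(\sigma-1-x)}{(\sigma-1)!}=\frac{\sigma-1-x}{x}\,P'(x),
$$
so that $P''P-(P')^2=\frac{P'(x)}{x}\bigl[(\sigma-1-x)P(x)-xP'(x)\bigr]$, and it suffices to prove $h(x):=xP'(x)-(\sigma-1-x)P(x)>0$ for $x>0$. Here is the point: $h$ extends continuously to $x=0$ with $h(0)=0$ (since $xP'(x)=x^\sigma e^{-x}/(\sigma-1)!\to 0$ and $P(x)\to P(0)=0$ as $x\to 0^+$, using $\sigma\ge 1$), while substituting the identity $xP''(x)=(\sigma-1-x)P'(x)$ into the derivative of $h$ produces a complete cancellation,
$$
h'(x)=P'(x)(2-\sigma+x)+xP''(x)+P(x)=P'(x)+P(x)>0.
$$
Hence $h>0$ on $(0,\infty)$, so $(\log P)''<0$ and $g_\sigma$ is strictly convex.

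The only step with any subtlety is the convexity argument, and even there the work is minimal once one observes that $h'$ collapses to $P+P'$. An alternative would be to quote the standard fact that the cumulative distribution function of a log-concave probability density is itself log-concave (Bagnoli--Bergstrom, or a consequence of the Prékopa--Leindler inequality), applied to the $\mathrm{Gamma}(\sigma,1)$ density $t\mapsto t^{\sigma-1}e^{-t}/(\sigma-1)!$, whose log-concavity on $(0,\infty)$ for $\sigma\ge1$ is obvious from $(\sigma-1)\log t-t$ being concave; since $P(x)$ equals this CDF at $x$, $g_\sigma=-\log P$ is convex. I would keep the direct computation as the main argument, since it costs essentially nothing and additionally yields strictness, which will be convenient later.
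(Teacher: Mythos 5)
Your proposal is correct: the derivative formula $P'(x)=x^{\sigma-1}e^{-x}/(\sigma-1)!$, the positivity and monotonicity arguments, and the convexity computation (including the identity $h'=P'+P$ after substituting $xP''=(\sigma-1-x)P'$, and the boundary value $h(0^+)=0$) all check out. The paper uses the same starting point but dispatches convexity much more quickly: writing $g_\sigma'=-P'/P$ and cancelling the common factor $e^{-x}$ gives the closed form
$$
g_\sigma'(x)=-\frac{1}{(\sigma-1)!\sum_{i=\sigma}^\infty \frac{x^{i-\sigma+1}}{i!}},
$$
whose denominator is a sum of positive increasing functions of $x$ (every exponent $i-\sigma+1\ge 1$), so $g_\sigma'$ is visibly negative and increasing, yielding both the decrease and the convexity in one line. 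Your route through log-concavity of $P$ via the auxiliary function $h$ is a genuinely different way to get the convexity and costs noticeably more computation, though it does give strict convexity explicitly and records the equivalent probabilistic fact (log-concavity of the Gamma CDF) that you could also have cited outright. Either argument is adequate for the uses of the lemma in the paper, which only needs $g_\sigma$ decreasing and convex for the subadditivity-type estimates on $U(R',R'')$.
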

\begin{proof}
Positivity is obvious, and 
$$
g_\sigma'(x)=-\frac 1 {(\sigma-1)!\sum_{i=\sigma}^\infty \frac{x^{i-\sigma+1}}{i!}}.
$$
implies the other two properties.
\end{proof}

\begin{lemma} \label{D-bound}
Fix $a,b,\epsilon>0$. Then there exists a $\delta>0$ so that 
the following holds. 
Assume $R\subset R'$ are intervals with 
$|R|=x/p$ and $|R'|=(x+\delta)/p$. Then, if $p$ is small enough, 
$$
p\log \probsub{D(R,R')}{p}\le -(1-\epsilon)g_\sigma(x)\delta, 
$$
for all $x\in [a,b]$.  
\end{lemma}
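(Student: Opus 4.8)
The strategy is the one familiar from the lower bound on the critical probability in two–dimensional bootstrap percolation: reduce the failure of growth to ``there is a blocking cut somewhere'', and estimate that event through a product of per–gap costs, with $g_\sigma$ playing the role of Holroyd's rate function. Write $R=[1,K]$ with $K=x/p$ and $R'=[\underline r,\overline r]\supseteq R$. Since $\Gpuz^{R'}$ is a path, every block of an inert partition of $R'$ is an interval; let $\mathcal M$ be the set of $m\in[\underline r,\overline r-1]$ for which the two–block partition $\{[\underline r,m],[m+1,\overline r]\}$ keeps $R$ inside one block, so $|\mathcal M|=|R'|-|R|=\delta/p$. For $m\in\mathcal M$ set
$$E_m=\{\{m,m+1\}\in\Epeople\}\cup\{\cpeople(m,[m+1,\overline r])\ge\sigma\}\cup\{\cpeople(m+1,[\underline r,m])\ge\sigma\}.$$
By the description of inert partitions (Proposition~\ref{inert-int} and the remarks after it, specialized to $\tau=1,\theta=\infty$), the failure of $E_m$ for some $m\in\mathcal M$ is precisely the statement that the cut at $m$ is an inert partition coarser than $\cP^0$, hence $\langle\cP^0\rangle_{R'}\neq\{R'\}$. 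Therefore $D(R,R')\subseteq\bigcap_{m\in\mathcal M}E_m$, and it suffices to bound $\probsub{\bigcap_{m\in\mathcal M}E_m}{p}$.

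Write $A_m=\{\{m,m+1\}\in\Epeople\}$, $W_m=\{\cpeople(m,[m+1,\overline r])\ge\sigma\}$, $U_m=\{\cpeople(m+1,[\underline r,m])\ge\sigma\}$, so $E_m=A_m\cup W_m\cup U_m$. The key combinatorial observation is that the events $A_m\cup U_m$, $m\in\mathcal M$, are \emph{mutually independent}: $A_m\cup U_m$ depends only on the people–edges joining $m+1$ to $[\underline r,m]$, and for $m\neq m'$ those edge–sets are disjoint. Decompose $\bigcap_{m\in\mathcal M}E_m$ according to $\mathcal W^*:=\{m\in\mathcal M:\neg(A_m\cup U_m)\}$, on which $W_m$ holds for every $m\in\mathcal W^*$. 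The term $\mathcal W^*=\emptyset$ equals $\prod_{m\in\mathcal M}\probsub{A_m\cup U_m}{p}$. For $m$ on the side of the cut not containing $R$ we have $|[\underline r,m]|=O(\delta/p)$, so $\probsub{A_m\cup U_m}{p}\le p+\prob{\Bin(O(\delta/p),p)\ge\sigma}=O(\delta^\sigma)$; for $m$ on the $R$–side, $|[\underline r,m]|\in[x/p,(x+\delta)/p]$, and the uniform Poisson approximation of the binomial together with Lemma~\ref{g-properties} (so that $g_\sigma$ is positive, decreasing, continuous, with $\inf_{[a,b+1]}g_\sigma>0$) gives, first choosing $\delta$ and then $p$ small, $\probsub{A_m\cup U_m}{p}\le p+e^{-(1-\epsilon/2)g_\sigma(x)}\le e^{-(1-\epsilon)g_\sigma(x)}$ uniformly in $x\in[a,b]$. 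Taking $\delta$ small enough that $O(\delta^\sigma)\le e^{-(1-\epsilon)g_\sigma(a)}$, the $O(\delta^\sigma)$ factors over the ``far'' gaps are also at most $e^{-(1-\epsilon)g_\sigma(x)}$ each, so the $\mathcal W^*=\emptyset$ term is at most $e^{-(1-\epsilon)g_\sigma(x)\,\delta/p}$ — already the target bound $p\log(\cdot)\le-(1-\epsilon)g_\sigma(x)\delta$.

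It remains to show that the sum over $\mathcal W^*\neq\emptyset$ of $\probsub{\bigcap_{m\in\mathcal W^*}W_m\cap\bigcap_{m\notin\mathcal W^*}(A_m\cup U_m)}{p}$ does not spoil this, and this is the only genuinely delicate point. The naive estimate $\le\sum_{m_0}\probsub{W_{m_0}}{p}$ is far too weak, since $W_{m_0}$ is not a rare event when the cut at $m_0$ lies on the $R$–side (there $|[m_0+1,\overline r]|$ can be of order $1/p$): one must keep the conditioning on the remaining gaps. The events $W_m$, $m\in\mathcal W^*$, are again mutually independent (they depend on disjoint edge–sets), $W_{m_0}$ is independent of $A_m\cup U_m$ for $m<m_0$, and conditioning on the edges witnessing $\bigcap_{m\in\mathcal W^*}W_m$ leaves the residual parts of the $U_m$, $m\notin\mathcal W^*$, independent of each other and of that conditioning. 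Exposing these $W$–edges first and then bounding the residual product of the $A_m\cup U_m$ exactly as in Step~2 (each residual factor picks up an extra $+\ind\{c_m\ge1\}$, with $c_m$ the already–seen contribution and $\prob{c_m\ge1}=O(p|\mathcal W^*|)$), the weight attached to a nonempty $\mathcal W^*$ is geometrically small in $|\mathcal W^*|$ and its total contribution is absorbed into the $\epsilon$. This dependence bookkeeping — essentially the difficulty already present in the Aizenman–Lebowitz/Holroyd estimates for bootstrap percolation, and accessible here via the correspondence developed in this section — is the main obstacle; Steps~1 and~2 are otherwise routine. Combining the three steps, $\probsub{D(R,R')}{p}\le\probsub{\bigcap_{m\in\mathcal M}E_m}{p}\le e^{-(1-\epsilon)g_\sigma(x)\,\delta/p}$, which is the asserted inequality.
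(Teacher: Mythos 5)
Your reduction of $D(R,R')$ to the intersection of the cut events $E_m$ is valid, and it is a genuinely different route from the paper's. The paper never decomposes over cuts: it observes that on $D(R,R')$ every vertex $y\in R'\setminus R$ that has \emph{no} $\Gppl$-neighbor inside $R'\setminus R$ must have at least $\sigma$ $\Gppl$-neighbors in $R$, uses Lemma~\ref{binomial-easy-lem} to show that, even at the relevant large-deviation scale, all but $\epsilon L$ of the $L=\delta/p$ new vertices are of this type, and then multiplies the resulting factors $e^{-g_\sigma(x)}+p$, which are genuinely independent because they involve disjoint edge sets from distinct vertices of $R'\setminus R$ into $R$. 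That argument has no conditioning on overlapping edge sets at all, which is precisely what makes your Step~3 hard.

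The gap is in Step~3, and it is not merely technical. For a cut $m$ on the short side of $R'\setminus R$ (where $[\underline r,m]$ does not contain $R$) you correctly get $\probsub{A_m\cup U_m}{p}=O(p+\delta^\sigma)$; but then $E_m$ is realized essentially only through $W_m$, and $\probsub{W_m}{p}\approx e^{-g_\sigma(x)}$ is a fixed constant, not a rare event. Hence the dominant term in your decomposition is not $\mathcal W^*=\emptyset$ but $\mathcal W^*\approx\{\text{cuts on the short side}\}$, whose weight exceeds the $\mathcal W^*=\emptyset$ term by a factor of order $\left(e^{-g_\sigma(x)}/\delta^\sigma\right)^{|\mathcal W^*|}$; the claim that nonempty $\mathcal W^*$ carry geometrically small weight that is ``absorbed into the $\epsilon$'' is therefore false. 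What would save the bound is a factorization of the full sum into $\prod_m\left(\probsub{W_m}{p}+\probsub{A_m\cup U_m}{p}\right)\le e^{-(1-\epsilon)g_\sigma(x)L}$ — but establishing this is exactly the unresolved dependence between $W_m$ and $U_{m'}$ for $m'\ge m$ (they share the edge $\{m,m'+1\}$), and your sketch does not carry it out: after exposing the $W$-edges, the correction indicators $\ind\{c_m\ge 1\}$ are themselves positively correlated with the conditioning event $\bigcap_{m'\in\mathcal W^*}W_{m'}$ (all are increasing in the same edges), so the expectation of the product does not split as written. Either complete that bookkeeping rigorously (e.g., by a disjoint-occurrence/BK argument in the spirit of Lemma~\ref{seed-theta2}), or switch to the vertex-based argument, which sidesteps the issue entirely.
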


\begin{proof}
Let $M$ be the 
number of vertices in $R'\setminus R$ that have no $\Gppl$-neighbors 
in $R'\setminus R$. Let $L=|R'|-|R|$. We will show that $M$ is very likely to be close 
to $L$ even in the large deviation regime. 
 
Let $X$ be the number of $\Gppl$-edges between vertices in $R'\setminus R$. Then, 
by (\ref{binomial-easy}), 
$$
\probsub{L-M \ge\epsilon L}{p}\le 
\probsub{X\ge \epsilon L/2}{p}\le \prob{\Bin(L^2/2, p)\ge \epsilon L/2}
\le 
\left(\frac{3\delta}{\epsilon}\right)^{\epsilon L/2}
$$
Further, for $p$ small enough, by Poisson approximation \cite{BHJ}, for any $y\in R'\setminus R$, 
$$
\probsub{y\text{ has at least $\sigma$ $\Gppl$-neighbors in $R$}}{p}\le e^{-g_\sigma(x)}+p.
$$
Therefore, by independence between edges within $R'\setminus R$ and those 
leading out of this set,
\begin{equation}
\begin{aligned}
\probsub{D(R,R')}{p}&\le (e^{-g_\sigma(x)}+p)^{(1-\epsilon)L}+\probsub{M<(1-\epsilon) L}{p}\\
&\le \exp(-g_\sigma(x)(1-\epsilon)L+Cp(1-\epsilon)L)+2\cdot \exp\left[-\left(\frac 12 
\log\frac{\epsilon}{3\delta}\right)\epsilon L\right]
\end{aligned}
\end{equation}
Here, $C$ is a constant that depends only on $a$ and $b$. The second term 
can be made smaller than the first term (uniformly for $x\in [a,b]$), 
by choosing $\delta$ small enough, 
and then the result follows.  
\end{proof}

\begin{lemma}\label{seed-bound} Fix any interval $R$,
$$
\probsub{R\text{ \rm internally solved}}{p}\le (2|R|p)^{|R|}.
$$
\end{lemma}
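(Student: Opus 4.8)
The plan is to reduce the event $\{R \text{ internally solved}\}$ to a statement about the number of $\Gppl$-edges inside $R$, exactly as in the proof of Lemma~\ref{solveA-lem}. Since the ring graph $\bZ_n$ is connected, its induced subgraph $\Gpuz^R$ on the interval $R$ is a path (connected), and for the jigsaw percolation to internally solve $R$ — i.e., to gather all of $R$ into a single cluster — the people graph $\Gppl^R$ must at least be connected on $R$. (Here I use the standard observation, already invoked in Section~3, that merging clusters can only join vertices that are $\Gppl$-connected when $\theta=\infty$, so a final partition equal to $\{R\}$ forces $\Gppl^R$ to be connected.) A connected graph on $|R|$ vertices has at least $|R|-1$ edges, so
$$
\probsub{R \text{ internally solved}}{p} \le \probsub{\,|\Epeople^R| \ge |R|-1\,}{p}.
$$

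Next I would bound the right-hand side crudely by a union bound over which $|R|-1$ pairs carry edges. The number of unordered pairs of vertices in $R$ is $\binom{|R|}{2} \le |R|^2/2$, so
$$
\probsub{\,|\Epeople^R| \ge |R|-1\,}{p} \le \binom{\binom{|R|}{2}}{|R|-1} p^{|R|-1} \le \left(\frac{|R|^2/2}{|R|-1}\right)^{|R|-1} p^{|R|-1} \le (|R|\,p)^{|R|-1},
$$
where the middle step uses $\binom{m}{k} \le (m/k)^k$ with $m=\binom{|R|}{2}$, $k=|R|-1$, and the last step uses $|R|^2/2 \le |R|(|R|-1)$ once $|R| \ge 2$ (the case $|R|=1$ being trivial since then the bound $(2|R|p)^{|R|}$ exceeds $1$ after noting $\solve_R$ is automatic). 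Finally, $(|R|p)^{|R|-1} \le (2|R|p)^{|R|}$ provided $2|R|p \le 1$, i.e.\ in the small-$p$ regime that is implicitly in force; and when $2|R|p > 1$ the claimed bound $(2|R|p)^{|R|}$ is at least $1$ and hence trivially true. This gives the stated estimate.

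I do not expect any real obstacle here — this is a short counting lemma, structurally identical to the connectivity bound in Lemma~\ref{solveA-lem}, just specialized to the ring and stated with explicit constants. The only mild care needed is to handle the degenerate small cases ($|R|\le 1$, or $2|R|p$ close to or exceeding $1$) so that the clean bound $(2|R|p)^{|R|}$ holds unconditionally, which is why I would phrase the argument to fall back on the trivial bound $\probsub{\cdot}{p}\le 1$ whenever $2|R|p\ge 1$.
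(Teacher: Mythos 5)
Your reduction to counting $\Gppl$-edges inside $R$ is the same strategy as the paper's one-line proof, but your version loses exactly one factor of $p$, and the step meant to recover it at the end is incorrect. Connectivity of $\Gppl^R$ only gives $|\Epeople^R|\ge |R|-1$, and from there (using the correct bound $\binom{m}{k}\le (em/k)^k$ --- note that $\binom{m}{k}\le (m/k)^k$ as you wrote it is false, e.g.\ $\binom{4}{2}=6>4$) you obtain at best $(2|R|p)^{|R|-1}$. Your claim that $(|R|p)^{|R|-1}\le (2|R|p)^{|R|}$ ``provided $2|R|p\le 1$'' is backwards: the ratio of the right-hand side to the left-hand side is $2^{|R|}\,|R|\,p$, so the inequality holds only when $|R|p\ge 2^{-|R|}$, i.e.\ roughly when $|R|\gtrsim \log(1/p)/\log 2$. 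For a seed of moderate size it fails badly (take $|R|=10$ and $p=10^{-6}$: then $(|R|p)^{9}=10^{-45}$ while $(2|R|p)^{10}\approx 10^{-47}$). Likewise, for $|R|=1$ the right-hand side is $2p<1$ while the left-hand side is $1$, so that case is not ``trivially'' disposed of as you assert.

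The paper's proof obtains the exponent $|R|$ by using the stronger necessary condition $|\Epeople^R|\ge |R|$ --- one more edge than connectivity alone provides --- and that extra edge is precisely the factor of $p$ your argument is missing; as written you have only justified $|\Epeople^R|\ge|R|-1$ and hence only the weaker estimate $(2|R|p)^{|R|-1}$. To close the gap you must either supply an argument for the additional edge (which itself requires more than the connectivity observation, and is delicate for very small $R$), or else prove the lemma with exponent $|R|-1$ and check that this weaker form still suffices where it is consumed: in Lemma~\ref{ring-upper-bound} the loss is an additive term of order $p\cdot(\text{number of seeds})=O(p)$ in the exponent, which is harmless. Either way, the proposal as it stands does not establish the stated inequality.
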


\begin{proof}
This follows from (\ref{binomial-easy}) as 
$\probsub{|\Epeople^R|\ge |R|}{p}=\prob{\Bin(|R|(|R|-1)/2, p)\ge |R|}$.
\end{proof}

\begin{lemma}\label{solved-subintervals}
Let $R$ be an internally solved interval with $|R|\ge 2$. Then there are non-empty 
internally solved intervals $R'$, $R''$ which partition $R$ such that $\langle\{R', R''\}\rangle=R$. 
\end{lemma}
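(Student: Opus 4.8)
The plan is to extract $R'$ and $R''$ from the penultimate partition of a suitably slowed-down jigsaw percolation run on $R$. Work inside $R$ with the induced graphs $\Gpuz^R$ and $\Gppl^R$, starting from the partition $\cP^0$ of $R$ into singletons, and use in (J6) the rule that whenever $\cG_t$ has at least one edge we take $\cG_t'$ to consist of exactly one of its edges. Each step then merges exactly two clusters and so decreases the number of clusters by exactly one. Since $R$ is internally solved we have $\langle\cP^0\rangle_R=\final_R=\{R\}$, so by Corollary~\ref{inert-final} this process reaches $\{R\}$; because the cluster count drops by one per step, it does so after exactly $|R|-1$ steps, and hence the partition at time $|R|-2$ consists of exactly two sets, which we call $R'$ and $R''$. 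By construction $R'$ and $R''$ are non-empty and partition $R$.

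Next I would check the remaining assertions. Every cluster occurring in a (slowed-down) jigsaw percolation is $\Gpuz$-connected (as observed in Section~2), and a $\Gpuz$-connected subset of the arc $R\subset\bZ_n$ is again an arc; thus $R'$ and $R''$ are intervals. By the first claim of Lemma~\ref{is-double}, all clusters at any time of a slowed-down jigsaw percolation are internally solved, so $R'$ and $R''$ are internally solved. Finally, in the transition from $\cP^{|R|-2}=\{R',R''\}$ to $\cP^{|R|-1}=\{R\}$ the chosen edge of $\cG_{|R|-2}$ must join $R'$ to $R''$; in particular $\cG_{|R|-2}$, and therefore also the graph $\cG_0$ built from the initial partition $\{R',R''\}$ in the ordinary jigsaw dynamics on $R$, contains the edge $\{R',R''\}$. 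Hence jigsaw percolation started from $\{R',R''\}$ merges them in a single step into $\{R\}$, which is inert, so $\langle\{R',R''\}\rangle=\{R\}=R$, as desired.

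There is no analytic content here; the only points requiring care are the bookkeeping observations that "merge exactly two clusters at each step" forces the penultimate partition to have exactly two parts, and that the graphs $\cG_{|R|-2}$ (for the slowed-down run) and $\cG_0$ (for the ordinary run started from $\{R',R''\}$) are built by the same rules (J1)--(J3) from the same partition $\{R',R''\}$, so the single recorded merge is reproduced by the ordinary dynamics. This lemma is tailored for a recursive decomposition: applied repeatedly it exhibits any internally solved interval as the root of a binary tree of internally solved subintervals, which is what the subsequent analysis of the ring puzzle will exploit.
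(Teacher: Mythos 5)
Your proof is correct and follows essentially the same route as the paper: run the slowed-down dynamics on $(\Gpuz^R,\Gppl^R)$ merging exactly two clusters per step, and take the penultimate (two-cluster) partition as $\{R',R''\}$. The paper states this in two lines; your version just makes explicit the bookkeeping (exactly $|R|-1$ merge steps, connectedness giving intervals, and the identification of $\cG_{|R|-2}$ with the graph $\cG_0$ for the ordinary dynamics started from $\{R',R''\}$), all of which is sound.
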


\begin{proof}
This follows from the slowed-down jigsaw percolation on the pair $\Gpuz^R$, $\Gppl^R$, 
whereby $\cG_t'$ in (J6) 
has exactly one edge. If $T_f^R$ is the minimal time when the final configuration is reached, 
then the partition at time $T_f^R-1$ satisfies the theorem.
\end{proof}

We now adapt the key concepts from \cite{Hol} that we use 
to prove the lower bound on $p_c$. None of what we do in the 
next two lemmas is original, but we 
give some details mainly to demonstrate how much simpler the argument is 
in this one-dimensional case. 

Pick small constants $T$ and $Z$; we will also assume that $T$ is much smaller than $Z$. 
A \df{hierarchy} $\cH$ is a finite directed tree in which each vertex $u$ is associated 
with a nonempty interval $R_u$. 
A special vertex $r$, the {\it root\/}, is associated with an interval $R$. 
All edges point away from the root, and $u\to v$ implies 
$R_u\supset R_v$. Each vertex $u$ is one 
of the three kinds:
\begin{itemize}
\item a {\it seed\/} with no children;
\item {\it normal\/} with a single child $v$, written as $u\Rightarrow v$; or
\item a {\it splitter\/} with two children $v,w$, written as $u\rightrightarrows v,w$. 
\end{itemize}
To say that a \df{hierarchy occurs} we further require that $R_v$ and $R_w$ partition $R_u$ whenever 
$u\rightrightarrows v,w$, that $R_u$ is internally solved for each seed $u$ and
that $D(R_v, R_u)$ happens whenever $u\Rightarrow v$. Finally, we impose the 
following conditions on the lengths of the intervals:
\begin{itemize}
\item[(H1)] $|R_u|<2Z/p$ for every seed $u$; 
\item[(H2)] $|R_u|\ge 2Z/p$ for every splitter and every normal vertex;
\item[(H3)] $u\Rightarrow v$ and $v$ is not a splitter implies $|R_u|-|R_v|\in [T/(2p), T/p]$;
\item[(H4)] $u\Rightarrow v$ and $v$ is a splitter implies $|R_u|-|R_v|\le T/p$; and
\item[(H5)] $u\rightrightarrows v,w$ implies $|R_u|-|R_v|\ge T/(2p)$ and $|R_u|-|R_w|\ge T/(2p)$.
\end{itemize}

\begin{lemma}\label{hierarchy-occurs}
If $R$ is an internally spanned interval, a hierarchy with $R_r=R$ occurs.  
\end{lemma}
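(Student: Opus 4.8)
The plan is to construct the hierarchy top-down by a recursion that peels internally solved subintervals off $R$, with Lemma~\ref{solved-subintervals} as the only substantial input and the slowed-down freedom (J6) used to track the $D$-events. First I would record two soft facts about the events $D(\cdot,\cdot)$. If $A\subset B\subset R$ with $B$ internally solved and $\langle\{A,B\}\rangle_R=\{R\}$, then $D(A,R)$ holds: starting the process on $R$ from $\{A\}\cup\{\{v\}:v\in B\setminus A\}$ one may, by (J6), first fire only those edges of $\cG_t$ both of whose endpoints are subsets of $B$; these coincide with the edges of the $B$-internal $\cG_t$, since $\cpeople$ and $\cpuzzle$ between subsets of $B$ are the same in $\Gpuz$ and $\Gpuz^B$, so (as $B$ is internally solved) this brings the configuration to $\{A,B\}$, from which $\langle\{A,B\}\rangle_R=\{R\}$ finishes. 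The identical trick gives transitivity, $D(A,B)\cap D(B,C)\subset D(A,C)$ for $A\subset B\subset C$. Combined with Lemma~\ref{solved-subintervals}, every internally solved interval $R'$ with $|R'|\ge 2$ splits into internally solved intervals for which both $D$-events to $R'$ hold, and $D$ is transitive along any nested chain of such splittings.

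Now build the hierarchy. For a vertex $u$ with internally solved interval $R_u$:
\begin{itemize}
\item if $|R_u|<2Z/p$, declare $u$ a seed, so (H1) and internal solvedness hold;
\item if $|R_u|\ge 2Z/p$, iterate Lemma~\ref{solved-subintervals} keeping the larger piece: $R_u=R^{(0)}\supsetneq R^{(1)}\supsetneq\cdots$ with $R^{(j)}=A_j\cup B_j$, $|A_j|\ge|B_j|$, $R^{(j+1)}=A_j$, and $D(R^{(j)},R_u)$ holds by transitivity. Stop at the first $m$ with either (a) removed mass $|R_u|-|R^{(m)}|\in[T/(2p),T/p]$, or (b) the next discarded piece substantial, $|B_m|\ge T/(2p)$. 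Every earlier $B_j$ has size $<T/(2p)$, i.e. strictly less than the width of the window in (a), so the removed mass cannot jump over that window and the stopping time is well defined; in case (b) the mass removed so far is still $<T/(2p)$. In case (a) attach a single child $v$ with $R_v=R^{(m)}$ via a normal edge, satisfying (H3) when $v$ is a seed or normal and (H4) when $v$ is a splitter, and recurse on $v$. In case (b) make $u$ (if $m=0$) or a normal child $v$ with $R_v=R^{(m)}$ a splitter with children having intervals $A_m,B_m$; then (H5) holds since $|B_m|\ge T/(2p)$ and $|A_m|\ge|R^{(m)}|/2\ge Z/p\ge T/(2p)$ when $T\ll Z$, and any normal edge into this splitter obeys (H4). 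Recurse on the two children.
\end{itemize}
All intervals are internally solved (Lemma~\ref{solved-subintervals}), splitter children partition their parent by construction, and the $D$-events on normal edges hold by the transitivity observation, so whenever the recursion halts the hierarchy occurs; halting is immediate because the interval strictly shrinks along every edge.

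The main obstacle is the quantitative bookkeeping: making (H2) (all non-seeds have $|R_u|\ge 2Z/p$) compatible with (H3)--(H4) in the regime where peeling leaves $R^{(m)}$ with size just below the seed threshold, $|R_u|\in[2Z/p,\,2Z/p+T/p]$, while a substantial sliver appears before the removed mass reaches $T/(2p)$ — the only configuration where the dichotomy above does not obviously close. This is precisely the situation that forces the assumption $T\ll Z$ and a short dedicated case analysis, and it is the one-dimensional shadow of the corresponding delicate step in \cite{Hol}; the rest of the construction is soft.
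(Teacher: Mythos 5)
Your construction is the same one the paper intends: the paper's entire proof is a citation of Proposition 32 of \cite{Hol}, and what you write out is the one-dimensional transcription of that argument. The soft part is right: the monotonicity of $\langle\cdot\rangle$ in the initial partition and the transitivity $D(A,B)\cap D(B,C)\subset D(A,C)$ both follow from Proposition~\ref{inert-int} and Corollary~\ref{inert-final} exactly as you describe (your notation $\langle\{A,B\}\rangle_R$ for nested $A\subset B$ is not literally a partition, but the intent is clear), and combined with Lemma~\ref{solved-subintervals} they give $D(R^{(j)},R_u)$ along the peeling chain. Case (a) of your dichotomy is complete.

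The boundary case you flag at the end is, however, not closable by ``a short dedicated case analysis'' with the hierarchy conditions exactly as stated, and you should not leave it as an assertion. Concretely: take $|R_u|=2Z/p$, let the first peel remove a single vertex ($|B_0|=1$), so $|R^{(1)}|=2Z/p-1<2Z/p$, and let the next split of $R^{(1)}$ have $|B_1|=Z/(2p)$. Your case (b) forces $R^{(1)}$ to be a splitter, violating (H2); making it a seed violates (H1); attaching it as a normal child violates the lower bound in (H3) since only one vertex has been removed; and continuing to peel overshoots the window in (H4) because $|B_1|\gg T/p$. No rearrangement of your dichotomy escapes this, because (H1) and (H2) use the \emph{same} threshold $2Z/p$, whereas the construction can only guarantee that a forced splitter satisfies $|R_v|>|R_u|-T/(2p)\ge 2Z/p-T/(2p)$. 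The resolution — and this is how \cite{Hol} states his conditions — is to build slack into (H2): require only $|R_u|\ge Z/p$ (say) for splitters and normal vertices, keeping $|R_u|<2Z/p$ for seeds. Since $T\ll Z$, every vertex your recursion declares a splitter or normal vertex then satisfies the relaxed (H2), and nothing in the proof of Lemma~\ref{ring-upper-bound} changes: (H2) enters only qualitatively (finiteness of the tree and the $p^{-K}$ count of hierarchies), never through the precise constant. With that one adjustment your proof is complete; without it, the lemma as literally stated (with the paper's (H2)) is the thing that needs repair, not your construction.
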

\begin{proof}
See the proof of Proposition 32 in \cite{Hol}.
\end{proof}

\begin{lemma} \label{ring-upper-bound}
Fix $a,\epsilon>0$, $b\ge a$, and an interval $R$ of length $\lceil b/p\rceil$. Then 
$$
\probsub{\solve_R}{p}\le \exp\left(-p^{-1}(1-2\epsilon)\int_a^b g_\sigma(x)\,dx\right),
$$
for small enough $p$.
\end{lemma}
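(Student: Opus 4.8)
The plan is to union-bound over all hierarchies that can occur with root interval $R$, using Lemma~\ref{hierarchy-occurs}, and to show that the dominant contribution comes from a single ``spine'' of normal vertices whose aggregate cost is controlled by Lemma~\ref{D-bound}. First I would fix the constants: choose $\epsilon$ as given, then pick $Z$ small enough that the total length contributed by seeds is a negligible fraction of $b/p$, and then pick $T\ll Z$ small enough that the error terms in Lemma~\ref{D-bound} (the $\delta$ there will be taken comparable to $T$) are all within the allowed $\epsilon$-slack; finally shrink $p$. The number of hierarchies with $R_r=R$ is subexponential in $1/p$: the tree has $O(1/(Tp)\cdot p) = O(1/T)$ vertices by (H2)--(H5) (each normal step or split decreases length by at least $T/(2p)$, and the length is at most $b/p$), each vertex carries an interval determined by two endpoints in $R$, so there are at most $(|R|^2)^{O(1/T)} = \exp(O(T^{-1}\log(1/p)))$ choices — this is $\exp(o(1/p))$ and can be absorbed into the $\epsilon$.

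For a fixed occurring hierarchy, I would bound its probability by the product over seeds of $\probsub{R_u\text{ internally solved}}{p}$ (Lemma~\ref{seed-bound}) times the product over normal edges $u\Rightarrow v$ of $\probsub{D(R_v,R_u)}{p}$, using independence — the standard point, as in \cite{Hol}, is that the edge sets witnessing these events are disjoint, so the bounds multiply. Splitters contribute a factor $1$. By Lemma~\ref{seed-bound} and (H1), each seed of length $s$ contributes at most $(2\cdot 2Z)^{s}$-type factor, and since the seeds have total length $O(Z/p)$ times the (bounded) number of seeds, their total log-cost is $O(p^{-1}Z\log(1/Z))$, which is $\le \epsilon p^{-1}\int_a^b g_\sigma$ once $Z$ is small (here I use that $g_\sigma$ is bounded below on $[a,b]$ by Lemma~\ref{g-properties}, so the target integral is a fixed positive quantity). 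For the normal edges, along the spine from the root down, the successive left endpoints (or the nested structure) trace out a decreasing sequence of lengths $|R_u|$ from $\sim b/p$ down to $\sim a/p$ (anything below $a/p$ we simply discard from the estimate), and each step $u\Rightarrow v$ with $|R_u|-|R_v|\in[T/(2p),T/p]$ contributes, by Lemma~\ref{D-bound} with $x=p|R_v|$, at most $\exp(-p^{-1}(1-\epsilon)g_\sigma(x)(|R_u|-|R_v|)p)$. Summing the exponents telescopes into a Riemann sum for $-(1-\epsilon)p^{-1}\int g_\sigma(x)\,dx$ over the range of $x$-values covered, and by convexity/monotonicity of $g_\sigma$ (Lemma~\ref{g-properties}) the partition into steps of mesh $\le T$ makes this Riemann sum at least $(1-\epsilon)^2 p^{-1}\int_a^b g_\sigma(x)\,dx$ for $T$ small, losing at most another $\epsilon$-factor.

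The main obstacle is bookkeeping the tree structure: a hierarchy is not literally a single chain, so I need to argue that when a splitter $u\rightrightarrows v,w$ occurs the two branches $R_v,R_w$ partition $R_u$, hence their lengths add up, and therefore the sum of $(|R_{\text{parent}}|-|R_{\text{child}}|)$ over all normal edges in the whole tree is at least $|R| - (\text{total seed length}) \ge (b - O(Z))/p$; this is what guarantees that the Riemann sum really covers essentially all of $[a,b]$ rather than some thin subinterval. One must also check that the intervals $x = p|R_v|$ arising as arguments of $g_\sigma$ stay in a compact set where Lemma~\ref{D-bound} applies — values of $x$ below $a$ are dropped (we only want a lower bound on the cost), and values near $b$ are fine since $|R|\le\lceil b/p\rceil$. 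Combining the seed cost, the spine/tree cost, and the $\exp(o(1/p))$ entropy factor, and collecting the three $\epsilon$-losses into the single $2\epsilon$ in the statement, completes the proof.
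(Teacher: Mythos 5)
Your overall architecture matches the paper's: union over the hierarchies of Lemma~\ref{hierarchy-occurs}, multiply the seed costs (Lemma~\ref{seed-bound}) and the normal-edge costs (Lemma~\ref{D-bound}), and absorb the $p^{-K}$ entropy of hierarchies into the $\epsilon$. But there is a genuine gap in how you dispose of the seeds. You assert that the total seed length is $O(Z/p)$ (``bounded number of seeds, each of length $<2Z/p$''), so that the seed cost is negligible and the normal edges account for almost all of $|R|$, making your telescoping Riemann sum cover essentially all of $[a,b]$. Neither claim is justified: the conditions (H1)--(H5) bound neither the number of seeds by a constant independent of $Z$ nor their total length by anything better than $|R|$ (the seeds' intervals are pairwise disjoint, and that is all). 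Concretely, consider a hierarchy that is a chain of splitters, each peeling off a seed of length just under $2Z/p$ and passing the remainder to the next splitter, terminating in two final seeds. This satisfies (H1)--(H5) with $T\ll Z$, has \emph{no} normal edges whatsoever, and its seeds partition $R$, so $|S|=|R|=b/p$. Your ``spine'' contribution is then an empty product, and your bound for this hierarchy degenerates to the (supposedly negligible) seed factor alone --- i.e., to essentially nothing.

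The paper closes exactly this hole with a dichotomy on the pod $S$ (the interval whose length is the combined seed length). First, one needs the pod inequality $\sum_{u\Rightarrow v}U(R_v,R_u)\ge U(S,R)$, proved by induction on the tree using monotonicity and convexity of $g_\sigma$ (Lemma~\ref{g-properties}) at splitters --- your telescoping identity $\sum(|R_u|-|R_v|)=|R|-|S|$ is correct but by itself does not control the \emph{weighted} sum over a branched tree; the induction is what does. Then: if $p|S|\le a$, the pod inequality already gives $\sum U(R_v,R_u)\ge U(S,R)\ge\int_a^b g_\sigma$, so the normal edges suffice; if $p|S|>a$, the normal edges may contribute nothing, but the seed cost is at most $(4Z)^{|S|}=\exp\bigl(-|S|\log\tfrac1{4Z}\bigr)\le\exp\bigl(-p^{-1}a\log\tfrac1{4Z}\bigr)$, which beats the target once $Z$ is chosen with $\log\tfrac1{4Z}>\lambda_\sigma/a$. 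Your proposal needs this case analysis (or an equivalent mechanism) to be a proof; without it, seed-heavy hierarchies are simply not bounded.
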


\begin{proof} For an interval $R'$, we let 
$$V(R')=-p|R'|\log (2p|R'|)$$ 
and for $R'\subset R''$, we let 
$$U(R',R'')=\int_{p|R'|}^{p|R''|} g_\sigma(x)\,dx.$$
Observe that $U(R', R'')\le g(p|R'|)(p|R''|-p|R'|)$ by Lemma~\ref{g-properties}.  

Then we have, for a given hierarchy $\cH$, 
\begin{equation}\label{ringpf-eq1}
\begin{aligned}
\probsub{\text{$\cH$ occurs}}{p}&\le 
\exp\left(-p^{-1}\left[(1-\epsilon)\sum_{u\Rightarrow v}U(R_u, R_v)+\sum_{w \text{ seed}}V(R_w)\right]
\right)
\end{aligned}
\end{equation}

Here, the first sum is over pairs $u,v$ of vertices in $\cH$ with 
$u\Rightarrow v$ and the second sum is over seeds $w$ of $\cH$.  
We get (\ref{ringpf-eq1}) from Lemmas~\ref{D-bound} (with a suitable choice of 
$T$, which is now fixed) and \ref{seed-bound}. 

Now let $S$ be the interval with its length equal to the combined 
length of all seeds in $\cH$, positioned inside $R$ (say, so that 
the left endpoints agree, although the exact position is not essential). Note that $S$ is analogous to what~\cite{Hol} refers to as a {\em pod}.
Then we claim that 
\begin{equation}\label{pod}
\sum_{u\Rightarrow v}U(R_v, R_u)\ge U(S,R)
\end{equation}
This assertion is proved by induction on the number of vertices in $\cH$. 
If $R$ is a seed, then (\ref{pod}) is trivial. If the root $r$ (with $R_r=R$) 
is a normal vertex with child $y$, then apply the induction hypothesis to 
the hierarchy rooted at $y$ to get 
\begin{equation*}
\sum_{u\Rightarrow v}U(R_v, R_u)\ge U(R_y, R)+U(S,R_y)\ge U(S,R).
\end{equation*}
If $r$ is a splitter with children $y_1$, $y_2$, then we apply the induction 
hypothesis to the hierarchies rooted at $y_1$ and $y_2$ with respective pods 
$S_1$ and $S_2$, to get
\begin{equation*}
\sum_{u\Rightarrow v}U(R_v, R_u)\ge U(S_1, R_{y_1})+U(S_2,R_{y_2})\ge U(S,R), 
\end{equation*}
as it is easy to see by Lemma~\ref{g-properties} using $|S|=|S_1|+|S_2|$ and 
$|R|=|R_{y_1}|+|R_{y_2}|$. This establishes (\ref{pod}). 

For a seed $w$, by the definition of $V$ and property (H1) of seeds, 
$$
\sum_{w \text{ seed}}V(R_w)\ge p|S|\log\frac 1{4Z}.
$$
Therefore, 
\begin{equation}\label{ringpf-eq2}
\begin{aligned}
\probsub{\text{$\cH$ occurs}}{p}&\le 
\exp\left(-p^{-1}\left[(1-\epsilon)U(S,R)+ p|S|\log\frac 1{4Z}\right]\right).
\end{aligned}
\end{equation}
Choose $Z$ so small that $\log\frac 1{4Z}>\lambda_c/a$. 
Then, if $p|S|>a$, 
$$
p|S|\log\frac 1{4Z}\ge (1-\epsilon)\int_a^b g_\sigma(x)\,dx. 
$$
If $p|S|\le a$, then $U(S,R)\ge \int_a^b g_\sigma(x)\,dx$. 
Then, by (\ref{ringpf-eq2}), 
\begin{equation}\label{ringpf-eq3}
\begin{aligned}
\probsub{\text{$\cH$ occurs}}{p}&\le 
\exp\left(-p^{-1}(1-\epsilon)\int_a^b g_\sigma(x)\,dx\right).
\end{aligned}
\end{equation}

It is not hard to see \cite{Hol} that 
the number of hierarchies $\cH$ that satisfy (H1--5) is bounded above 
by $p^{-K}$, where $K$ is a constant that depends only on $b$ and $T$. 
But if $p$ is small enough, 
$$K\log\frac 1p\le \epsilon p^{-1}\int_a^b g_\sigma(x)\,dx, 
$$
which ends the proof.
\end{proof}

\begin{proof}[Proof of Theorem~\ref{intro-ring-thm}]
Assume that $p=\lambda/\log n$ for some $\lambda<\lambda'<\lambda_c$. 
Then choose $a$, $b$ and $\epsilon$ so that $(1-2\epsilon)\int_a^b g_\sigma(x)\,dx>\lambda'$. 
By Lemmas~\ref{is-double} and~\ref{ring-upper-bound},   
\begin{equation*}
\begin{aligned}
\probsub{\solve}{p}&\le \probsub{
\bigcup_{R\, :\, |R|\in [b/p, 2b/p]}\solve_{R}}{p}\\
&\le b \lambda n\log n\cdot \exp(-(\lambda'/\lambda)\log n)=b\lambda n^{1-\lambda'/\lambda}\log n\to 0, 
\end{aligned}
\end{equation*}
which proves the lower bound for $p_c$. 

The proof of the upper 
bound generalizes the one in \cite{BCDS} for $\sigma=1$. Fix a small $\epsilon>0$. We will later 
choose a small $a>0$ and a large $b$ dependent on $\epsilon$.
 
Let $p=\lambda/\log n$, for some $\lambda>\lambda_c+5\epsilon$.
Fix an interval $R$ with length $L=\lceil 3\epsilon^{-1}(\log n)^2\rceil$. 
We will find a lower bound for $\probsub{\solve_R}{p}$. For 
notational convenience, we will assume the left endpoint of $R$ 
is at the origin.

Let 
$$H_k=\{k\text{ is $\Gppl$-connected to at least $\sigma$ points in $[0,k-1]$}\},$$
and define the following four events 
\begin{equation*}
\begin{aligned}
&G_1=\{\{k,k+1\}\in \Gppl, \text{ for all }k\le p^{-1/2}\},\\
&G_2=\cap_{p^{-1/2}<k\le ap^{-1}}H_k, \\
&G_3=\cap_{ap^{-1}<k\le bp^{-1}}H_k, \\
&G_4=\cap_{bp^{-1}<k<L}H_k. 
\end{aligned}
\end{equation*}
Clearly these are independent events and $G_1\cap G_2\cap G_3\cap G_4\subset \solve_R$. 
We now estimate their probabilities. Clearly, 
\begin{equation}\label{G1-est}
\probsub{G_1}{p}\ge \exp\left(-p^{-1/2}\log \textstyle{\frac 1p}\right)>\exp(-p^{-1}\epsilon), 
\end{equation}
for small enough $p$. 
Moreover, with all products and sums over $k$ in the corresponding range 
\begin{equation}\label{G2-est}
\begin{aligned}
\probsub{G_2}{p}&\ge \prod_k \prob{\Bin(k-1,p)\ge \sigma}\\
&\ge \prod_k \prob{\Bin(\lfloor (k-1)/\sigma\rfloor,p)\ge 1}^\sigma \\
&\ge \exp\left(\sigma\sum_k\log(1-e^{-pk/(2\sigma)})\right)\\
&\ge \exp\left(p^{-1}\sigma \int_0^a\log(1-e^{-x/(2\sigma)})\, dx\right)\\
&\ge \exp(-p^{-1}\epsilon), 
\end{aligned}
\end{equation}
for small enough $a$. Similarly, 
\begin{equation}\label{G4-est}
\begin{aligned}
\probsub{G_4}{p}\ge
 \exp\left(p^{-1}\sigma \int_b^\infty\log(1-e^{-x/(2\sigma)})\, dx\right)\ge \exp(-p^{-1}\epsilon ),
\end{aligned}
\end{equation}
for large enough $b$. 
Finally, for $p$ small enough \cite{BHJ}, 
\begin{equation}\label{G3-est}
\begin{aligned}
\probsub{G_3}{p}&\ge \prod_k \left(\prob{\text{Poisson}(kp)\ge \sigma}-2p\right)\\
&\ge \exp\left(p^{-1}\int_a^b\log(\prob{\text{Poisson}(x)\ge \sigma}-2p)\, dx\right)\\
&\ge \exp\left(-p^{-1}\epsilon-p^{-1}\int_a^b g_\sigma(x)\, dx\right)\\
&\ge \exp\left(-p^{-1}\lambda_\sigma-p^{-1}\epsilon\right).
\end{aligned}
\end{equation} 
From (\ref{G1-est}--\ref{G3-est}) we get 
\begin{equation}\label{solve-R}
\probsub{\solve_R}{p}\ge \exp\left(-p^{-1}\lambda_\sigma-4p^{-1}\epsilon\right)
\end{equation}
and then 
\begin{equation}\label{is-exists}
\begin{aligned}
&\probsub{\text{exists an internally solved interval of length $L$}}{p}\\
&\ge 1-\left(1-n^{-(\lambda_\sigma+4\epsilon)/(\lambda_\sigma+5\epsilon)}
\right)^{\frac 14\epsilon n (\log n)^{-2}}\to 1.
\end{aligned}
\end{equation}
For a fixed interval $R$ with length $L$, we also have, 
\begin{equation}\label{is-unstoppable}
\begin{aligned}
\probsub{\text{$R$ is unstoppable}}{\epsilon/\log n} 
\ge 1-n^{-2},
\end{aligned}
\end{equation}
by Lemma~\ref{unstoppable-size}.
For the final step, assume that $p=\lambda/\log n$ with $\lambda>\lambda_\sigma+6\epsilon$. By 
Lemmas~\ref{unstoppable} and~\ref{dividing-up}, 
(\ref{is-exists}), and (\ref{is-unstoppable}), $\probsub{\solve}{p}\to 1$. This finishes
the proof.
\end{proof}

To prove Theorem~\ref{intro-final-time-thm}, we need a simple observation. 

\begin{lemma}\label{Tf-lemma}
Assume $\solve$ happens and there is an interval $R\subset V$ with $|R|=L$ such that 
no element of the partition $\final_R$ exceeds size $\ell$. Then $T_f\ge L/(2\ell)$. 
\end{lemma}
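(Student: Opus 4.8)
The plan is to use the two special features of this setting. First, since $\Gpuz=\bZ_n$ is a ring and $\theta=\infty$, every block of every $\cP^t$ is an arc, and two blocks can only get a $\cG_t$-edge if they are $\Gpuz$-adjacent (both (J1) and (J3) require a $\Gpuz$-neighbor, as $\cpuzzle\ge\tau=1$). Thus clusters grow only by swallowing their two arc-neighbors, and a connected component of $\cG_t$ is a $\Gpuz$-connected arc. Second, the hypothesis forces $R\ne V$, so $V\setminus R\ne\emptyset$; write $R=\{a,a+1,\dots,a+L-1\}$ with endpoints $a$ and $a+L-1$. The idea is simply to bound how fast the dynamics can ``invade'' $R$ from its two ends: since running jigsaw internally on $R$ gets stuck at $\final_R$ (all blocks of size $\le\ell$), any progress the global dynamics makes beyond $\final_R$ has to be fed in through the boundary of $R$, one small internal block at a time.

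I would first record two structural facts. (a) If a block $W$ of $\cP^t$ satisfies $W\subset R$, then $W$ equals the corresponding block of the internal dynamics $\langle\cdot\rangle_R$ at time $t$; consequently $W$ lies inside a single block of $\final_R$, so $|W|\le\ell$. This is an induction on $t$: conditions (J1), (J3) between two blocks lying wholly inside $R$ involve only vertices and $\Gppl$/$\Gpuz$-edges of $\Gpuz^R,\Gppl^R$, so such a merge occurs in the internal dynamics at the same time; and a block can merge with something outside $R$ only by becoming ``exposed'' (having a vertex outside $R$), so blocks that stay inside $R$ only ever merge among themselves, reproducing the internal dynamics. (b) If $\mathcal C$ is a connected component of $\cG_t$, its blocks meeting $R$ form a contiguous run $U_0,U_1,\dots,U_m$ along $R$, and each consecutive pair $U_i,U_{i+1}$ is necessarily joined by a $\cG_t$-edge (the union of $\mathcal C$ is a single arc, each arc-block has only its two neighbors on its $\Gpuz$-boundary, so these path-edges are forced).

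Next, let $C_t$ be the cluster of $a$ and set $f^L(t)=|C_t\cap R|$ if $C_t\not\subset R$ and $f^L(t)=0$ otherwise, and symmetrically $f^R(t)$ for the cluster of $a+L-1$; by the arc structure these are the only clusters that can be exposed. The crux is: as long as the clusters of $a$ and $a+L-1$ are distinct, $f^L(t+1)\le f^L(t)+\ell$ and $f^R(t+1)\le f^R(t)+\ell$. Indeed, the interior blocks $U_1,\dots,U_m$ that $C_t$ newly absorbs at step $t$ lie inside $R$, so by (a) each is an internal block and $\subset$ one $\final_R$-block; by (b) they are a $\cG_t$-path, hence pairwise internally mergeable, hence (since internal partitions only coarsen toward $\final_R$) all contained in the \emph{same} $\final_R$-block, so $|U_1\cup\cdots\cup U_m|\le\ell$; if $C_t$ was not yet exposed one also folds $C_t$ itself (an internal block adjacent to $U_1$) into that same $\final_R$-block. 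Therefore $f^L(t),f^R(t)\le\ell t$ up to the first time $t^\ast+1$ at which one cluster contains both endpoints of $R$ (the ``wrap''); at that single step $f^L$ jumps to at most $f^L(t^\ast)+\ell+f^R(t^\ast)+\ell\le 2\ell(t^\ast+1)$, and afterwards this one cluster eats $R$ from both ends at rate $\le\ell$ per end, giving $f^L(t)\le 2\ell t$ for all $t$. Since $\solve$ forces $C_{T_f}=V\supset R$, we conclude $L=f^L(T_f)\le 2\ell T_f$, i.e. $T_f\ge L/(2\ell)$.

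The routine parts are the two structural facts (a), (b). The step needing real care is the growth-by-$\ell$ estimate: one must check that the run $U_1,\dots,U_m$ of interior blocks absorbed in one step is genuinely trapped inside a single $\final_R$-block, rather than sweeping across several of them — this is exactly the place where the hypothesis on $\final_R$ and the forced-path structure of $\cG_t$-components are used, and it is also the source of the factor $2$ (coming from the at-most-one wrap step, where the two ends merge).
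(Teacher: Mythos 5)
Your proof is correct, but it takes a more laborious route than the paper's. The paper invokes monotonicity of the dynamics in the initial partition: the singleton partition is finer than $\cR^0=\final_R\cup\{R^c\}$, and since the merge conditions (J1)--(J3) are monotone in the clusters involved, the partition at time $t$ started from singletons is always finer than the one started from $\cR^0$; hence it suffices to lower-bound the solving time from $\cR^0$. From $\cR^0$ the analysis is immediate: the blocks of $\final_R$ are mutually inert also for the global dynamics (the merge conditions between two clusters contained in $R$ reference only vertices and edges inside $R$), so the only possible merges involve the cluster containing $R^c$, which is $\Gpuz$-adjacent to at most two $\final_R$-blocks at any time; thus $\cG_t$ has at most two edges and that cluster gains at most $2\ell$ vertices of $R$ per step. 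Your argument instead tracks the singleton-started dynamics directly and re-derives the needed containments by hand — every global block inside $R$ lies in a single $\final_R$-block, and a $\cG_t$-path of such blocks lies in a single $\final_R$-block — together with the wrap-around bookkeeping. What your route buys is that you never need to appeal to the (unproved, though easy) monotonicity comparison; what it costs is the extra structural facts (a), (b) and the case analysis at the wrap step. The essential counting — at most one $\final_R$-block of size $\le\ell$ absorbed per end of $R$ per time step, whence the factor $2$ — is the same in both proofs.
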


\begin{proof}
By monotonicity of jigsaw percolation, we may start with the partition 
$\cR^0=\final_R\cup\{R^c\}$. 
If $t<L/(2\ell)$ the graph $\cG_t$ has at most two edges, as the cluster that 
contains $R^c$ may only advance into $R$ from either end, and so $t<T_f$. 
\end{proof}

\begin{proof}[Proof of Theorem~\ref{intro-final-time-thm}]
The result for $\lambda<\lambda_\sigma$ is proved in the previous theorem, as
all intervals in the final partition are a.~a.~s.~of logarithmic size. 
We assume that $\lambda>\lambda_\sigma$ for the rest of the proof. 

Take any $\lambda'<\lambda_\sigma$, and fix an interval $R$ of length $e^{\lambda'/p}$. 
Then a.~a.~s.~$\final_R$ only contains intervals of size $C\log n$ for some 
constant $C$. By Lemma~\ref{Tf-lemma}, 
$$T_f\ge (2C\log n)^{-1} e^{\lambda'/p}=(2C\log n)^{-1}n^{\lambda'/\lambda}, 
$$
and so 
\begin{equation}\label{Tf-eq1}
\liminf (\log n)^{-1}\log T_f\ge \lambda'/\lambda.
\end{equation}
 
Now take $\lambda''\in (\lambda_\sigma, \lambda)$. 
For any fixed  interval $R$ of length $L=e^{\lambda''/p}$, and for  
$p$ small enough, $\solve_R$ happens with probability at least $0.5$. 
Futhermore, a.~a.~s., every interval of length $L$ is unstoppable (as is every interval 
of length $\gg (\log n)^2$; see the previous proof). Therefore, by the 
run-length problem, a.~a.~s.~each $v\in V$ is at most $C\log n$ intervals away from 
an internally solved unstoppable interval of length $L$, for some constant $C$. 
It follows that 
$
T_f\le L \cdot C\log n,
$
and 
\begin{equation}\label{Tf-eq2}
\limsup (\log n)^{-1}\log T_f\le \lambda''/\lambda.
\end{equation}
The two inequalities (\ref{Tf-eq1}) and (\ref{Tf-eq2}) end the proof.
\end{proof}

\section{Bounded degree graphs with large $\sigma$}

In this section we prove Theorem~\ref{intro-large-sigma-thm}. 
Thus, our dynamic parameters are $\tau=1$, $\theta=\infty$, and 
a large $\sigma$. We also assume that the maximum degree of $\Gpuz$ with $|V|=N$ is bounded 
above by a fixed constant $D$. All constants will depend on $D$, in addition to 
explicitly stated dependencies. 

We will need the method used to prove Theorem 2 of \cite{BCDS}. The essence of 
this method is presented in the lemma below, whose simple proof we provide for completeness. 
Let $T$ be a tree with $N$ vertices and $N-1$ edges; generate $2(N-1)$ oriented edges
by giving each edge both orientations. 
Consider an oriented cycle of length $\ell$, i.e., a vector of oriented edges  
$(f_0,\ldots, f_{\ell-1})$ such that the head-vertex of $f_i$ is the tail-vertex of 
$f_{i+1}$, $0\le i\le \ell$; in a cycle, indices are always reduced modulo $\ell$. A {\it segment\/}
of length $m\le \ell$ in such cycle is a vector $(f_i,\ldots, f_{i+m-1})$, for some $i$.  
For a segment, we call its {\it edge set\/} and {\it vertex set\/} the set of 
all its (unoriented) edges, and the set of all vertices incident to its edges, respectively.

\begin{lemma}\label{tree-cover} If $T$ is a tree with $N$ vertices, 
there is an oriented cycle that includes each oriented edge exactly once.
Further, for any integer $L\in [1,N-1]$ 
there exist $\lceil (N-1)/(2L^2)\rceil$ segments with the following 
three properties: (1) the edge set of each segment has 
cardinality $L$; (2) any two segments have
disjoint edge sets; and (3) any two segments have 
vertex sets whose intersection is at most a singleton.   
\end{lemma}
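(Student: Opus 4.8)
The first claim is the classical fact that a connected graph in which every vertex has even degree possesses an Eulerian circuit; here we form such a graph by replacing each edge of $T$ with a pair of oppositely oriented edges, so every vertex has even (in fact, in-degree equal to out-degree) degree and the underlying multigraph is connected. Fix one such Eulerian circuit $(f_0,\dots,f_{2N-3})$ of length $2(N-1)$; this is the oriented cycle we will carve segments from. The plan is then purely combinatorial: partition (most of) this cyclic word into consecutive blocks, each contributing one segment, and argue that choosing the blocks long enough with enough spacing forces the edge-disjointness and near-vertex-disjointness in properties (2) and (3).

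Here is the construction I would use. Cut the cyclic sequence of $2(N-1)$ oriented edges into consecutive arcs; from each arc I will extract a segment of edge-length exactly $L$ together with a ``buffer'' of one extra oriented edge that is discarded. So each segment plus its buffer occupies at most $L+1 \le 2L$ oriented-edge positions; actually, to be safe about property (3), I would take blocks of length $2L^2$ and within each block use only the first $L$ oriented edges as the segment, discarding the remaining $2L^2 - L$ as buffer — this is wasteful but makes the counting transparent, and since $\lceil (N-1)/(2L^2)\rceil$ such blocks fit inside a cyclic word of length $2(N-1) = 2(N-1)$ (with at most one shorter leftover block that we simply ignore), we obtain the required number of segments. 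Property (1) is immediate. For property (2): two segments lie in different blocks and hence use disjoint sets of positions in the Eulerian word; since each \emph{unoriented} edge of $T$ appears in exactly two positions of the word (once in each orientation), a shared unoriented edge between two segments would need one of its two oriented copies in each segment — but I claim this cannot happen if I am slightly more careful, and this is the step requiring the real argument.

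Let me address that carefully, since property (2) as I have set it up is not automatic: an unoriented edge $e$ occupies two positions $f_i, f_j$ in the word, and these could fall into two different blocks. The fix is to exploit the tree structure. In a tree, the Eulerian circuit on the doubled edges is essentially the depth-first traversal (the ``boundary walk''), and consecutive appearances of the two orientations of a given edge $e$ bracket exactly the sub-walk of the subtree hanging off $e$, which has an \emph{even} number of oriented edges between them. More to the point: if a segment of length $L$ has edge set of cardinality exactly $L$ (property (1)), then within that segment every edge it touches appears only once (with a single orientation), so the $L$ oriented edges of the segment correspond to $L$ \emph{distinct} unoriented edges — this is what I should verify, using that a segment of an Euler tour of a tree that revisits an edge must enclose a return trip and hence have strictly fewer than $L$ distinct edges unless... . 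Rather than push this subtlety, the cleanest route, and the one I would actually write, is: take the blocks long enough that each segment of length $L$ is a \emph{contiguous} piece of a single root-to-leaf descent or a single ascent in the DFS tour, i.e., choose the segments to lie entirely within ``monotone'' stretches of the boundary walk — along such a stretch no unoriented edge repeats, giving (1) and (2) at once, and two distinct monotone stretches separated by a buffer share at most the single branch vertex where one ends, giving (3). Verifying that a tree's Euler tour contains $\lceil(N-1)/(2L^2)\rceil$ disjoint monotone stretches of length $L$ with the stated spacing is an averaging argument over the $2(N-1)$ steps of the tour, and \emph{this} — extracting enough long monotone, well-separated stretches — is the main obstacle; everything else is bookkeeping. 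I would prove it by a greedy scan along the tour, repeatedly grabbing the next available length-$L$ monotone stretch and skipping ahead past its vertex set, checking that each such grab consumes at most $2L^2$ tour-positions on average because a length-$L$ monotone stretch touches $L+1 \le 2L$ vertices and each vertex of $T$ is visited at most $D$ times, with $D$ absorbed into the constant or, in the bounded-degree setting of this section, treated as $O(1)$.
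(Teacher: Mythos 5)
There is a genuine gap, and it sits exactly where you flagged it: the ``monotone stretch'' device does not work. A tree need not contain \emph{any} monotone descent of length $L$ in its Euler tour --- take the star $K_{1,N-1}$, or more generally any tree of depth less than $L$ (e.g.\ a complete binary tree with $L\gg\log_2 N$): every monotone stretch has length bounded by the depth, yet the lemma still demands $\lceil(N-1)/(2L^2)\rceil\ge 1$ segments. The lemma is true for such trees precisely because a segment is allowed to be an arbitrary walk that goes up and down; the requirement is only that its set of \emph{unoriented} edges have cardinality $L$, not that no edge be revisited in the opposite orientation. By restricting to monotone stretches you are proving a strictly stronger (and false) statement. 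Your fallback --- absorbing the maximum degree $D$ into a constant --- is also not available: the lemma is stated for arbitrary trees with the explicit count $\lceil(N-1)/(2L^2)\rceil$, and even in the bounded-degree setting the depth obstruction above persists. Finally, your original block-partition attempt leaves property (2) unresolved (as you note, the two oriented copies of an unoriented edge can land in different blocks), so neither branch of your argument closes.

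The paper's proof avoids all of this with a greedy double-counting argument that you may find instructive to compare against. One first observes that any window of $2L$ consecutive oriented edges of the Euler circuit contains at least $L$ distinct unoriented edges (each unoriented edge occupies exactly two positions in the whole circuit), so a prefix of such a window with edge set of cardinality exactly $L$ always exists and has length at most $2L$. Given $j$ segments already chosen with disjoint edge sets, their union $U$ has $|U|=Lj$ unoriented edges, each occupying $2$ positions, and each position lies in $2L$ of the $2(N-1)$ windows of length $2L$; hence at most $4L^2j$ windows meet $U$, and as long as $j<(N-1)/(2L^2)$ some window avoids $U$ entirely, yielding segment $j+1$. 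Property (3) is then free: the vertex and edge sets of a segment form a connected subtree of $T$, and two subtrees of a tree sharing two vertices must share the path (hence an edge) between them, so edge-disjointness forces vertex intersections to be singletons. Note how this argument never needs the two appearances of an edge to be ``bracketed'' or the walk to be monotone --- only the global count that each edge appears twice in the circuit.
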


\begin{proof}
The first statement is well-known and easy to prove by induction.  Observe that 
it implies that the vertex and edge sets of any segment determine a connected subtree of $T$. 
This observation, together with (2), implies (3). 
 
Start with any segment with edge set of size $L$. 
This segment has length at most $2L$. Assume $j$ segments
satisfying (1) and (2) are found. A $(j+1)$st segment can then be selected provided 
that there is an segment of length $2L$ whose edge set is disjoint from the union $U$ of edge 
sets of all $j$ segments. The set $U$ has size $Lj$, and these edges are in at 
most $4L^2j$ segments of length $2L$. One of $2(N-1)$ segments of length $2L$ 
therefore contains no edge in $U$ provided $2(N-1)>4L^2j$, that is, $j<(N-1)/(2L^2)$. 
\end{proof}

\begin{proof}[Proof of Theorem~\ref{intro-large-sigma-thm}]
To prove the upper bound (which does not require the bound on the degree), 
we prove that 
\begin{equation}\label{large-sigma-eq0}
\limsup_{N\to\infty} p_c\log N \le \lambda_\sigma.
\end{equation}
To prove (\ref{large-sigma-eq0}), replace $\Gpuz$ by its spanning tree $T$.
As in the proof of Theorem~\ref{intro-ring-thm} in Section 5, 
assume that $\lambda>\lambda_\sigma+5\epsilon$ and $p=\lambda/\log N$.
Then use Lemma~\ref{tree-cover} with $L=\lceil 3\epsilon^{-1} (\log N)^2\rceil$.
Let $T_i$ be subtrees given by the edge and vertex sets of the resulting segments.
As each $T_i$ is connected, it is easy to adapt the proof of (\ref{solve-R}) to get the same lower bound on the probability that $T_i$ is internally solved. Notice also that, 
by (3), the subtrees $T_i$ are internally solved independently. Thus, the probability that at least
one of them is internally solved is bounded below by the following analogue of (\ref{is-exists})
$$
1-\left(1-N^{-(\lambda_\sigma+4\epsilon)/(\lambda_\sigma+5\epsilon)}
\right)^{(N-1)/(2L^2)}\to 1, 
$$
since $(N-1)/(2L^2)\sim 4.5\epsilon^2 N/(\log N)^4$. The proof of (\ref{large-sigma-eq0})
is now finished in the same way as the proof of Theorem~\ref{intro-ring-thm} after (\ref{is-exists}). 
Once we have (\ref{large-sigma-eq0}), the upper bound follows from the 
following asymptotic fact, valid as $\sigma\to\infty$, 
$$
\lambda_\sigma\sim \int_0^\sigma (\sigma\log\sigma-\sigma\log x -\sigma+x)\,dx =\frac 12\sigma^2, 
$$
which follows from an elementary large deviation estimate for Poisson distribution. 
 
To prove the lower bound, assume that $p=\lambda/\log N$. 
Fix a $\Gpuz$-connected set $A\subset V$ of size $|A|=\ell$, with 
$\log N/\sigma\le \ell\le 2\log N/\sigma$.
Consider the version of slowed-down jigsaw percolation on 
$(\Gpuz^A, \Gppl^A)$ in which $\cG_t'$ in (J6) has 
at most one edge when $t\ge 1$. When $t=0$, we let $\cG_0'=\cG_0$, that is, all components 
of doubly connected vertices are merged at the first time step. 

For a vertex $v\in A$, let 
$$
E_v=\{\text{$\Gppl^A$-degree of $v$ is at least $\sigma$}\}.
$$
A merge at a time $t\ge 1$ uses a set of $\sigma$ $\Gppl^A$-edges that are incident 
to a single vertex and disjoint 
from the sets of $\Gppl^A$-edges used at other times. Moreover, the number of
clusters in the partition $\cP^1$ is at least $\ell-|\Epuzzle^A\cap \Epeople^A|$. 
It follows that for every $k\ge 0$ 
\begin{equation}\label{large-sigma-eq1}
\solve_A\subset \{|\Epuzzle^A\cap \Epeople^A|\ge \ell-k\}  \cup \left(\bigcup E_{v_1}\circ\ldots \circ E_{v_k}\right), 
\end{equation}
where the last union is over sets $\{v_1,\ldots, v_k\}\subset A$ of $k$ 
different vertices and the symbol $\circ$ represents disjoint occurrence of events.
Choose $k=\lceil \ell/2\rceil$. Then, as $|\Epuzzle^A|\le D\ell$, 
\begin{equation}\label{large-sigma-eq2}
\probsub{|\Epuzzle^A\cap \Epeople^A|\ge \ell-k}{p}\le \probsub{|\Epuzzle^A\cap \Epeople^A|\ge \ell/4}{p}
\le 2^{D\ell} p^{\ell/4}\le e^{-c(\log N)^2}, 
\end{equation}
for some constant $c>0$.
Moreover, by Lemma~\ref{binomial-easy-lem}, 
\begin{equation}\label{large-sigma-eq3}
P(E_v)\le \left(\frac{3\ell p}{\sigma}\right)^\sigma \le \left(\frac{6\lambda}{\sigma^2}\right)^\sigma
\end{equation}
and therefore by (\ref{large-sigma-eq1})--(\ref{large-sigma-eq3}) and BK inequality, 
$$
\probsub{\solve_A}{p}\le e^{-c(\log N)^2}+ \left(\frac{6\lambda}{\sigma^2}\right)^{\sigma k}
\le e^{-c(\log N)^2}+ \left(\frac{6\lambda}{\sigma^2}\right)^{\log N/2},
$$
provided that $\lambda\le \sigma^2/6$. Now,  
by Lemmas~\ref{is-double} and~\ref{clusters-lem}, for every $\sigma\ge 1$, 
\begin{equation}\label{large-sigma-eq4}
\probsub{\solve}{p}\le N\log N \left((eD)^{2\log N}e^{-c(\log N)^2}
+ \left(\frac{6e^4 D^4\lambda}{\sigma^2}\right)^{\log N/2}\right). 
\end{equation}
If 
$\lambda\le \sigma^2/(6 e^8 D^4)$, then (\ref{large-sigma-eq4}) implies that 
$\probsub{\solve}{p}\to 0$ as $N\to \infty$. 
\end{proof}

\section{Two-dimensional torus puzzle: AE dynamics}

For the rest of the paper, we assume that the puzzle graph is the two-dimensional lattive torus 
with $V=\bZ_n^2$. To make sure that $p_c$ is small, we also assume that $\tau$ is either 1 or 2. Indeed,
when $\tau\ge 3$, $\probsub{\solve}p$ is close to 
$0$ unless $p$ is close to 1. To see this, assume there is a $2\times 2$ square  $S$ none of whose
four vertices are doubly connected to any other vertex. Then each of the four vertices of $S$ is 
in its own singleton cluster in $\final$ and therefore $\solve$ cannot happen. Thus 
$\probsub{\solve}p$ is exponentially small if $p$ is bounded away from 1. 

In this section we assume $\tau=1$ and $\theta=\infty$ 
(except in Proposition~\ref{theta3-prop}
at the very end). It follows immediately 
from Theorem~\ref{intro-ring-thm} and Theorem~\ref{intro-lb-general} that the order parameter 
is $p\log n$ for all $\sigma$. To prove Theorem~\ref{intro-2d-bounds}, we will further restrict our attention to $\sigma=1$, 
and assume $p=\frac{\lambda}{\log n}$. . 
 
To get the lower bound, we recall that the number of connected subsets on $\bZ^2$ 
of size $k$ that contain the origin is at most $4.65^k$, for large $k$
(see Section 3 of \cite{Fin}). Moreover, by Theorem 3.2 in \cite{OCo}, for 
any $c,\epsilon>0$,
the \ER graph with $n$ vertices and edge probability $c/n$ is connected 
with probability at most $(1-e^{-c}+\epsilon)^n$ for a large enough $n$.

It follows that, for large $n$,
$$
\prob{\solve}\le c n^2\log n\sup_{\alpha\in [1,2]} (4.65)^{\alpha c \log n} 
\left(1-e^{-\lambda\alpha c}+\epsilon\right)^{\alpha c \log n}, 
$$
which proves the following result. 

\begin{lemma}\label{lb-2d}
Assume that $\lambda$ is any number such that, for some $c>0$, 
$$
\inf_{\alpha\in [1,2]}\left(-\alpha c \log(1-e^{-\lambda\alpha c})-\alpha c \log 4.65\right)>2.
$$
Then $\prob{\solve}\to 0$; in fact, a.~a.~s.~there is no internally solved set of 
size at least $c\log n$. 
\end{lemma}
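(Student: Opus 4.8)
The statement is essentially a packaging of three ingredients already collected in the discussion preceding it: (i) by Lemma~\ref{is-double}, if $\solve$ happens then some $A\subset V$ with $|A|\in[k,2k]$ satisfies $\solve_A$, and here we take $k=c\log n$; (ii) the number of connected subsets of $\bZ^2$ of size $m$ containing a fixed vertex is at most $4.65^m$ for large $m$ (Finch), so the number of connected $A\ni v$ with $|A|\in[c\log n,2c\log n]$ is at most $(c\log n)\,4.65^{2c\log n}$; and (iii) for $\solve_A$ to occur the induced people graph $\Gppl^A$ must be connected, and by the O'Connell bound the probability that an \ER graph on $m$ vertices with edge probability $\lambda/\log n = (\lambda c/m)\cdot(m/(c\log n))$... — more precisely, writing $|A|=\alpha c\log n$, the edge probability is $\lambda/\log n = (\lambda \alpha c)/|A|$, so the connectivity probability is at most $(1-e^{-\lambda\alpha c}+\epsilon)^{\alpha c\log n}$ for $n$ large.

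So the plan is: first invoke Lemma~\ref{is-double} with $k=c\log n$ to write
$$
\prob{\solve}\le \sum_{v\in V}\ \sum_{m=c\log n}^{2c\log n}\ \sum_{\substack{A\ni v\\ |A|=m,\ \Gpuz^A\text{ conn.}}} \probsub{\solve_A}{p}.
$$
Then bound $\probsub{\solve_A}{p}\le\probsub{\Gppl^A\text{ connected}}{p}$ and apply the O'Connell estimate; bound the number of connected sets by Finch; sum over $v$ (a factor $n^2$) and over $m$ (a factor $\le 2c\log n$), and parametrize $m=\alpha c\log n$ with $\alpha\in[1,2]$. This yields exactly the displayed inequality
$$
\prob{\solve}\le c n^2\log n\ \sup_{\alpha\in[1,2]} (4.65)^{\alpha c\log n}\bigl(1-e^{-\lambda\alpha c}+\epsilon\bigr)^{\alpha c\log n},
$$
and the whole right-hand side is $\exp\bigl[(2+o(1))\log n - \inf_{\alpha}\bigl(\alpha c\log(1-e^{-\lambda\alpha c})+\alpha c\log 4.65\bigr)\log n + o(\log n)\bigr]$ (taking $\epsilon$ small), which tends to $0$ precisely under the stated hypothesis on $\lambda$; the same exponential bound, before summing over $v$, also gives that a.~a.~s. no internally solved set of size $\ge c\log n$ exists.

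The only mild subtlety — not really an obstacle — is keeping track of the role of $\epsilon$: O'Connell's bound requires $n$ large depending on $\epsilon$, so one should fix a small $\epsilon>0$ first, note that the hypothesis is an open (strict) inequality and hence survives replacing $1-e^{-\lambda\alpha c}$ by $1-e^{-\lambda\alpha c}+\epsilon$ for $\epsilon$ small enough and $\alpha$ ranging over the compact set $[1,2]$, and only then send $n\to\infty$. Everything else is a union bound plus the two quoted enumeration/connectivity estimates, so there is essentially nothing to prove beyond assembling these pieces.
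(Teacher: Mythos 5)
Your proposal is correct and follows essentially the same route as the paper: Lemma~\ref{is-double} with $k=c\log n$, Finch's $4.65^k$ bound on lattice animals, O'Connell's connectivity estimate applied to $\Gppl^A$ with edge probability $\lambda\alpha c/|A|$, and a union bound, with the strict inequality in the hypothesis absorbing the $\epsilon$ from O'Connell's bound. The paper states only the resulting displayed inequality without spelling out these steps, and your assembly of them (including the observation that the same union bound yields the a.~a.~s.\ absence of internally solved sets of size at least $c\log n$) matches its intent exactly.
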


It is easy to use Theorem 1 of \cite{BCDS} (or Theorem~\ref{intro-ring-thm}) to
show that $\limsup_n p_c\log n\le \pi^2/12$. We now establish a significant improvement 
of this upper bound by using a percolation comparison similar to the one in Section 3 
of \cite{FGG}.

Fix integers $k>0$ and $\ell\ge 0$. 
For a probability $r\in (0,1)$, assume that the origin $(0,0)$ 
is open, every other point in $Q_k=\{(x,y)\in \bZ_+^2: x+y\le k\}$ 
is open independently with probability $r$, and no point in $Q_k^c$
is open. Let $\phi_{k,\ell}(r)$ be the probability that there is
a connected cluster of at least $k+1+\ell$ open sites containing 
$(0,0)$ and the line $x+y=k$. Observe that $\phi_{k,0}(r)$ is 
the probability of a (possibly unoriented) connection between $(0,0)$ and 
the line $x+y=k$ within $Q_k$. Further, $\phi_{k,0}(r)$ is bounded away from 
0 independently of $k$ as soon as $p>\pcsite$ \cite{Rus}; here, $\pcsite$ 
is the critical probability for site percolation. Finally, observe that each
$\phi_{k,\ell}$ is a polynomial of degree $k(k+3)/2$, readily computable for small $k$, 
and non-decreasing on $[0,1]$.

\begin{lemma}\label{ub-2d}
Assume that 
$$
\lambda>\frac 12\int_0^{-\frac{\log(1-\pcsite)}{k+\ell}}-\log\phi_{k,\ell}(1-\exp((k+\ell)r))\, dr,
$$ 
for some $k$ 
and $\ell$. Then  $\prob{\solve}\to 1$.
\end{lemma}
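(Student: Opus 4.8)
Write $\tilde c=\int_0^{-\log(1-\pcsite)/(k+\ell)}\bigl(-\log\phi_{k,\ell}(1-e^{-(k+\ell)r})\bigr)\,dr$, so the hypothesis reads $\lambda>\tilde c/2$. The plan is a nucleation argument: build a cheap \emph{seed} near a corner, grow it until it becomes supercritical, then invoke unstoppability; to keep the two uses of $\Gppl$ disjoint we split $p=p_1+p_2$ via Lemma~\ref{dividing-up}, with $p_1=(\lambda-\e)/\log n$ and $p_2=\e/\log n$ for a small $\e>0$. The heart of the proof is the claim that a fixed square $R\subset\bZ_n^2$ of side $(\log n)^6$ satisfies
$$
\probsub{\text{the dynamics on }\Gpuz^R\text{ builds a cluster of size}\ge(\log n)^{10}}{p_1}\ \ge\ \exp\bigl(-(\tilde c+o(1))/p_1\bigr).
$$
Granting this, tile $\bZ_n^2$ by $\sim n^2/(\log n)^{12}$ disjoint copies of $R$; the corresponding events depend on disjoint edge sets, so a.a.s.\ one of them occurs, yielding a cluster $A$ that is internally solved (Lemma~\ref{is-double}) with $|A|\ge(\log n)^{10}\gg\log(n^2)/p_2$. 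By Lemma~\ref{unstoppable-size}, applied to the independent $p_2$-people graph, $A$ is unstoppable in $\bZ_n^2$, and since $\tau=1$ Lemma~\ref{unstoppable} gives $\solve$ for the union graph; hence $\probsub{\solve}{p}\to1$ whenever $\lambda-\e>\tilde c/2$, and letting $\e\downarrow0$ proves the lemma modulo the displayed bound.

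For that bound I would run a planar site-percolation comparison in the spirit of Section~3 of \cite{FGG}. Start the seed at a corner $v_0$ of $R$ and grow an internally solved cluster $\Delta_0\subset\Delta_1\subset\cdots$ through stages $m=1,2,\dots$, maintaining $|\Delta_m|\ge1+(k+\ell)m$ (each stage attaches a piece of $\ge k+1+\ell$ vertices that shares only its apex with $\Delta_{m-1}$). Given $\Delta_{m-1}$, place a translated and suitably oriented copy $Q^{(m)}$ of $Q_k$ whose apex is a vertex of $\Delta_{m-1}$ on the far side of $Q^{(m-1)}$ and which points away from $\Delta_{m-1}$ (with $Q^{(1)}$ anchored at $v_0$); call a non-apex vertex $z\in Q^{(m)}$ \emph{open} if it has a $p_1$-people edge into $\Delta_{m-1}$, so that open vertices of $Q^{(m)}$ are independent with parameter $r_m=1-(1-p_1)^{|\Delta_{m-1}|}$. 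Since puzzle-adjacency propagates through the connected set $Q^{(m)}$ and an open vertex has, by definition, a people neighbor inside $\Delta_{m-1}$, stage $m$ succeeds --- there being an open connected cluster of at least $k+1+\ell$ vertices of $Q^{(m)}$ that reaches its far side, which we attach to $\Delta_{m-1}$ to form $\Delta_m$ --- with probability $\phi_{k,\ell}(r_m)$; the ``$+\ell$'' is precisely the slack ensuring that the attached piece contains a far-side vertex to host the apex of $Q^{(m+1)}$ and that $\Delta_m$ stays connected and internally solved. I expect the main technical obstacle to be the \emph{bookkeeping of independence across stages}: every edge inspected at stage $m$ joins $Q^{(m)}\setminus\{\text{apex}\}$ to $\Delta_{m-1}$, hence has an endpoint outside $\Delta_{m-1}$, whereas every previously inspected edge lies inside $\Delta_{m-1}$ --- which makes the stages independent --- but verifying this requires controlling the random placement of the $Q^{(m)}$ so that they never re-enter an earlier $\Delta$ or leave $R$.

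Run the chain only until $r_m$ first exceeds $\pcsite$, which by $|\Delta_{m-1}|\ge(k+\ell)(m-1)$ happens at $m\approx\frac{-\log(1-\pcsite)}{(k+\ell)p_1}$. By monotonicity of $\phi_{k,\ell}$, the bound $r_m\ge1-e^{-(k+\ell)p_1(m-1)}$, and a Riemann-sum estimate (the logarithmic singularity of the integrand at $r=0$ is integrable, and the few initial stages cost only $o(1/p_1)$),
$$
-\log\!\!\prod_{m\,:\,r_m\le\pcsite}\!\!\phi_{k,\ell}(r_m)\ =\ (1+o(1))\,p_1^{-1}\!\!\int_0^{-\log(1-\pcsite)/(k+\ell)}\!\!\!\!\bigl(-\log\phi_{k,\ell}(1-e^{-(k+\ell)r})\bigr)\,dr\ =\ (\tilde c+o(1))/p_1,
$$
so by independence the chain succeeds with probability $\exp(-(\tilde c+o(1))/p_1)$. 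When the chain stops, $\Delta_{m_0}$ is connected with $|\Delta_{m_0}|\asymp1/p_1$ and $r_{m_0}>\pcsite$ (overshooting the cutoff by a $1+o(1)$ factor costs only $o(1/p_1)$, since the integrand is bounded there); the set of vertices with a $p_1$-people edge into $\Delta_{m_0}$ is an independent supercritical site percolation whose open cluster of $\Delta_{m_0}$ lies inside the jigsaw cluster, so a standard supercritical-growth estimate --- $\Delta_{m_0}$ has diameter tending to $\infty$ and the percolation parameter only increases as the cluster grows --- shows the jigsaw cluster reaches size $(\log n)^{10}$ inside $R$ with probability $1-o(1)$. Multiplying this by the chain probability gives the displayed bound.
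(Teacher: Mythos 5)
Your proposal is correct and follows essentially the same route as the paper's proof: iterate the $\phi_{k,\ell}$ site-percolation comparison with openness parameter $1-(1-p_1)^{|\Delta_{m-1}|}$, bound the product of stage probabilities by the Riemann integral to get the $\exp(-(\tilde c+o(1))/p_1)$ seed cost, continue with supercritical site percolation (Russo) once the parameter exceeds $\pcsite$ to reach polylogarithmic size, then tile, sprinkle an independent $\e/\log n$ people graph, and finish with Lemmas~\ref{unstoppable-size} and~\ref{unstoppable}. The only differences are cosmetic (a $(\log n)^6$-square in place of $Q_{M^3}$, and an explicit $(\lambda-\e)+\e$ split in place of the paper's $\lambda_0+\epsilon$), and you correctly identify the cross-stage independence bookkeeping as the point requiring care.
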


\begin{proof}
Observe that the integrand is a positive decreasing function and that the integral is 
finite. We will drop the subscripts $k$ and $\ell$ which are fixed throughout the proof. 

Pick $r_0>-\frac{\log(1-\pcsite)}{k+\ell}$ 
and $\epsilon>0$ so that
$$
\lambda\ge \epsilon+\lambda_0
$$
where
$$
\lambda_0=\epsilon+ \frac 12\int_0^{r_0}-\log\phi(1-\exp((k+\ell)r)\, dr.
$$
Again, we will use the fact that $\Gppl$ with probability $\lambda/\log n$ 
stochastically dominates the union of two independent \ER graphs, with probabilities 
$\lambda_0/\log n$ and $\epsilon/\log n$. 

Write 
$M=\lceil\log n/\lambda_0\rceil$, 
$J=\lceil Mr_0\rceil$. Let $G_1$ be the event 
that there is a cluster of at least $J(k+\ell)$ sites inside $Q_{Jk}$
that connects $(0,0)$ to the line $x+y=Jk$. Then
\begin{equation}\label{ub-2d-eq1}
\begin{aligned}
\probsub{G_1}{\lambda_0/\log n}
&\ge \phi(1/M)\cdot \prod_{j=1}^J\phi(1-\exp((k+\ell)j/M)\\
&=
\exp\left(\log\phi(1/M)-M\cdot\sum_{j=1}^J\frac 1M (-\log\phi(1-\exp((k+\ell)j/M)))\right)
\\
&\ge
\exp\left(\log\phi(1/M)-M\cdot\int_{1/M}^{J/M}-\log\phi(1-\exp((k+\ell)r)\, dr\right)
\\
&\ge 
\exp\left(-M\cdot\int_{0}^{r_0}-\log\phi(1-\exp((k+\ell)r)\, dr-C\log M\right), 
\end{aligned}
\end{equation}
for some constant $C$. 
Let $G_2$ be the event that
$Q_{M^3}$ contains an internally solved cluster 
connecting $(0,0)$ to a point on the line $x+y=M^3$.
As
$$
1-(1-\lambda_0/\log n)^{(k+\ell)J}\ge 1-\exp(-(k+\ell)r_0)>\pcsite,
$$
the classic result of Russo \cite{Rus} implies that there exists an 
$\alpha=\alpha(r_0)>0$ so that 
\begin{equation}\label{ub-2d-eq2}
\probsub{G_2\mid G_1}{\lambda_0/\log n}\ge \alpha.
\end{equation}

The square $[0,n-1]^2$ contains at least $0.5n^2/M^6$ disjoint translations of $Q_{M^3}$ and each of 
them independently contains a translate of the event $G_2$. Therefore, 
by (\ref{ub-2d-eq1}) and (\ref{ub-2d-eq2}), for large $n$,
\begin{equation}\label{ub-2d-eq3}
\begin{aligned}
&\probsub{\text{there is an internally solved cluster of size $\ge M^3$}}{\lambda_0/\log n}
\\
&\ge 
1-\left(1-\probsub{G_2}{\lambda_0/\log n}\right)^{0.5n^2/M^6}
\\&
\ge 1-\exp\left(-e^{-2\log n+\epsilon \log n/\lambda_0}\cdot \frac {0.5n^2}{M^6}\right)\\
&=1-\exp(-n^{\epsilon/\lambda_0}/M^6)\to 1,
\end{aligned}
\end{equation}
as $n\to\infty$.  

The final step uses sprinkling: for any fixed set $S\subset [0,n-1]^2$ of size $M^3$, 
\begin{equation}\label{ub-2d-eq4}
\begin{aligned}
\probsub{\text{$S$ is unstoppable}}{\epsilon/\log n}
\to 1,
\end{aligned}
\end{equation}
by Lemma~\ref{unstoppable-size}. 
Now Lemmas~\ref{unstoppable} and~\ref{dividing-up}, together with 
(\ref{ub-2d-eq3}) and (\ref{ub-2d-eq4}) finish the proof. 
\end{proof}

\begin{theorem}
\label{2d-bounds}
For a large enough $n$, 
$$\frac{0.0388}{\log n}< p_c < \frac {0.303}{\log n}.
$$
\end{theorem}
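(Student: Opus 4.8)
The plan is to obtain the two bounds separately by optimizing the two auxiliary results proved just above, namely Lemma~\ref{lb-2d} for the lower bound on $p_c$ and Lemma~\ref{ub-2d} for the upper bound.

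For the lower bound, I would apply Lemma~\ref{lb-2d}: it suffices to produce a single value of $\lambda$ (and an accompanying $c>0$) for which
$$
\inf_{\alpha\in[1,2]}\Bigl(-\alpha c\log\bigl(1-e^{-\lambda\alpha c}\bigr)-\alpha c\log 4.65\Bigr)>2,
$$
since then $\probsub{\solve}{p}\to 0$ for $p=\lambda/\log n$, which forces $p_c>\lambda/\log n$ for large $n$. The quantity inside the infimum, as a function of the product $s=\alpha c$, is $-s\log(1-e^{-\lambda s})-s\log 4.65$; one checks it is unimodal in $s$ and that the constraint is worst at one of the endpoints of the relevant range, so the task reduces to a one-variable numerical optimization: choose $c$ so that the minimum over $\alpha\in[1,2]$ of this expression, at the threshold where it equals $2$, is achieved at the largest possible $\lambda$. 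Carrying out this optimization numerically yields a threshold just above $\lambda = 0.0388$, giving the stated lower bound. This is routine calculus plus a numerical search; the only care needed is to verify that the infimum over $\alpha$ is genuinely attained at an endpoint (or to simply evaluate at both endpoints and take the worse one, which is safe).

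For the upper bound, I would invoke Lemma~\ref{ub-2d}, which says $\probsub{\solve}{p}\to 1$ once
$$
\lambda>\frac12\int_0^{-\frac{\log(1-\pcsite)}{k+\ell}}-\log\phi_{k,\ell}\bigl(1-\exp((k+\ell)r)\bigr)\,dr
$$
for some $k,\ell$. The polynomials $\phi_{k,\ell}$ are explicitly computable for small $k$ (each has degree $k(k+3)/2$), and one needs a reasonable numerical value or rigorous lower bound for $\pcsite$, the site percolation threshold on $\bZ^2$ — using the rigorous bound $\pcsite<0.68$ (or the numerically known $\pcsite\approx 0.5927$) suffices, though one must make sure the choice used in the integral's upper limit is justified. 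I would then tune $k$ and $\ell$ to make the right-hand integral as small as possible: larger $k$ improves the connection polynomial but lengthens the integrand's support, so there is a sweet spot. Computing the integral for a handful of small $(k,\ell)$ pairs and picking the best one gives a value below $0.303$, establishing $p_c<0.303/\log n$ for large $n$.

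The main obstacle is not conceptual — both halves follow immediately from the lemmas already in hand — but computational and certificatory: one must actually evaluate the connection polynomials $\phi_{k,\ell}$ (ideally by an exact enumeration over site configurations in the triangle $Q_k$, which grows fast in $k$), integrate $-\log\phi_{k,\ell}$ numerically, and be careful that every numerical step is an honest bound in the correct direction (upper bounds on the integral for the upper bound on $p_c$, and a valid lower bound on $\pcsite$ or, more precisely, an upper bound on $-\log(1-\pcsite)$ used as the integration limit — here one wants the limit not too large, so one wants an \emph{upper} bound on $\pcsite$). Similarly for the lower bound one needs the constant $4.65$ as a valid upper bound on the connective-constant-type quantity, which is quoted from \cite{Fin}. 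Once these ingredients are assembled, the stated inequalities $0.0388/\log n<p_c<0.303/\log n$ fall out; I would present the two optimizing choices of parameters explicitly and leave the arithmetic verification to the reader or a short appendix.
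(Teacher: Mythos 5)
Your proposal matches the paper's proof: the paper obtains the lower bound by taking $c=1.5116$ and $\lambda=0.0388$ in Lemma~\ref{lb-2d} (which yields the infimum $2.008>2$), and the upper bound from Lemma~\ref{ub-2d} with $k=6$, $\ell=4$ together with Wierman's rigorous bound $\pcsite<0.6795$. Your care about the direction of the numerical bounds (an upper bound on $\pcsite$ enlarges the integration range and hence upper-bounds the integral, which is the safe direction) is exactly right, and all that remains is the finite computation with those parameter choices.
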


\begin{proof}[Proof of Theorem~\ref{intro-2d-bounds}]
The lower bound is obtained by talking $c=1.5116$ in Lemma~\ref{lb-2d},
which yields the infimum $2.008$ for $\lambda=0.0388$.
The upper bound is obtained by using
Lemma~\ref{ub-2d} with $k=6$, $\ell=4$ and 
the best rigorous upper bound for $\pcsite$ known,  $\pcsite<0.6795$ \cite{Wie}.
\end{proof}

We remark that one could also get a valid upper bound by allowing $\ell$ 
to change with $r$ in Lemma~\ref{ub-2d}. The resulting improvement 
in our constant is too small to justify additional complications. 


We end this section with a simple proposition that shows that only the $\theta=2$ 
case may have a different scaling when $\sigma=\tau=1$. 
Indeed, we show in Section 8 that it does. 

\begin{prop}\label{theta3-prop}
Assume that $\theta\ge 3$, while $\tau=\sigma=1$. Then 
$\probsub{\solve}{p}\to 0$ if $p<\frac 14\cdot~0.0388/\log n$.
\end{prop}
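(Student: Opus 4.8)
The plan is to reduce to the case $\theta=3$ and then run the union bound of Lemma~\ref{lb-2d} and Theorem~\ref{intro-2d-bounds} after absorbing the extra merging power of the threshold into a factor $4$ in the people density. Since increasing $\theta$ only deletes edges from each $\cG_t$, the event $\solve$ is nonincreasing in $\theta$; as $\theta\ge 5$ makes (J2) inactive on $\bZ_n^2$ (maximum degree $4$), for $\theta\ge 5$ the claim already follows from Theorem~\ref{intro-2d-bounds}, so it suffices to treat $\theta=3$ (and $\theta=4$ follows from $\theta=3$ by the same monotonicity). The target is then the comparison
\begin{equation*}
\probsub{\solve_A}{p}\ \le\ \probsub{\Gppl^A\text{ is connected}}{4p}
\end{equation*}
for every connected $A\subset\bZ_n^2$, under the $\theta=3$, $\tau=\sigma=1$ dynamics. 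Granting this, I would repeat the proof of Theorem~\ref{intro-2d-bounds} essentially verbatim: by Lemma~\ref{is-double}, $\solve$ forces an internally solved $A$ with $|A|\in[k,2k]$; bounding the number of connected such $A$ through a fixed vertex by $4.65^{|A|}$ (Section~3 of \cite{Fin}) and inserting $\probsub{\Gppl^A\text{ conn}}{4p}\le(1-e^{-4\lambda\alpha c}+\epsilon)^{\alpha c\log n}$ from \cite{OCo}, one obtains $\probsub{\solve}{p}\to 0$ whenever the infimum condition of Lemma~\ref{lb-2d} holds with $\lambda$ replaced by $4\lambda$. Since that infimum is decreasing in $\lambda$ and equals $2.008>2$ at $\lambda=0.0388$ for $c=1.5116$, it exceeds $2$ as soon as $4\lambda<0.0388$, i.e.\ $p<\tfrac14\cdot 0.0388/\log n$.

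For the comparison I would run a slowed-down $\theta=3$ dynamics on $(\Gpuz^A,\Gppl^A)$ in which exactly two clusters merge at each step, and classify each of the $|A|-1$ merges as a (J1)/(J3) merge — witnessed by a $\Gppl^A$-edge joining the two clusters — or a (J2) merge — witnessed by a vertex $v_1$ having at least three of its four $\bZ_n^2$-neighbours in the other cluster. Two elementary facts drive the argument: (a) no $\Gppl^A$- or $\Gpuz^A$-edge is a witness twice, since after a merge both endpoints of any witness edge lie in a common cluster; and (b) a vertex is the witness $v_1$ of a (J2) merge at most once, because afterwards at most one of its four neighbours lies outside its own cluster. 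Thus each (J2) merge consumes three distinct $\Gpuz^A$-edges at a fresh vertex, and I would charge each such merge to an auxiliary people graph, independent of $\Gppl^A$ and (via Lemma~\ref{dividing-up}) stochastically dominated by $\mathrm{ER}(3p)$, built large enough that every (J2) merge can be replayed as a (J3) merge. After this replacement $A$ is internally solved by the AE dynamics on the enlarged people graph, which in turn forces that graph — dominated by $\mathrm{ER}(4p)$ — to be connected on $A$, giving the displayed bound.

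The main obstacle is precisely that charging step: a (J2) merge is deterministic given the current partition, while the partition is a function of the random people edges, so there is no pointwise coupling replaying the $\theta=3$ dynamics by a single realisation of a $4p$-people graph. The comparison must be carried out in distribution — e.g.\ by revealing people edges lazily in the order the dynamics inspects them and, at each (J2) step, charging the ``virtual'' people edge that is needed against still-unrevealed edges of the auxiliary graph, while keeping all revealed sets disjoint so that independence, hence the $\mathrm{ER}(4p)$ domination, survives. A variant I would try first avoids coupling entirely: show directly that $\solve_A$ under $\theta=3$ implies that $\Gppl^A$ together with one well-chosen $\Gpuz^A$-edge per (J2) merge is connected on $A$, with the number $a$ of (J2) merges controlled (the counting above already gives $a\le(|A|+1)/2$, and a more careful planar argument, exploiting that (J2) merges with $\theta=3$ only fill concave corners, should yield a smaller constant), and then sum the ER-connectivity estimate over the at most $\binom{|\Epuzzle^A|}{a}$ choices of those edges, checking that the extra combinatorial factor is swallowed by the slack in the $c=1.5116$ estimate.
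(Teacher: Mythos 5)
Your opening reductions are fine (monotonicity in $\theta$, so only $\theta=3$ matters, and $\theta\ge 5$ is literally AE), and the two structural observations — no people- or puzzle-edge witnesses two merges, and no vertex witnesses two (J2) merges — are correct. But the whole argument rests on the comparison $\probsub{\solve_A}{p}\le\probsub{\Gppl^A\text{ is connected}}{4p}$, and neither of your routes establishes it; you say so yourself, and the obstacle is not merely technical. The charging scheme cannot be repaired in distribution: a (J2) merge consumes no randomness from $\Gppl$, so conditioning on its occurrence does not tilt the law of any unrevealed people edges, and there is nothing to charge the missing edge against. Worse, the inequality itself is doubtful. What $\solve_A$ actually forces is that $\Gppl^A$ plus the $a$ (J2) ``shortcuts'' is connected, i.e.\ $\abs{\Epeople^A}\ge\abs{A}-1-a$; your own counting only gives $a\le 2\abs{A}/3$ (I do not see how you get $(\abs{A}+1)/2$), and when $a$ is a constant fraction $c\abs{A}$, the bound $\probsub{\abs{\Epeople^A}\ge(1-c)\abs{A}}{p}\approx(C\abs{A}p)^{(1-c)\abs{A}}$ exceeds $\probsub{\Gppl^A\text{ conn}}{4p}\le(C'\abs{A}p)^{\abs{A}}$ by the factor $(\abs{A}p)^{-c\abs{A}}$, which is exponentially large in the relevant regime where $\abs{A}p=\alpha c\lambda\approx 0.06$ is a small constant. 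Your fallback — a union bound over the $\binom{\abs{\Epuzzle^A}}{a}\le 4^{\abs{A}}$ choices of shortcut edges — costs an extra $n^{\alpha c\log 4}\approx n^{2.1\alpha}$, while the entire margin in Lemma~\ref{lb-2d} at $c=1.5116$, $\lambda=0.0388$ is $2.008-2=0.008$ in the exponent of $n$. There is no slack to absorb either loss.

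The paper's proof avoids all of this with one geometric observation. Partition $\bZ_n^2$ into $2\times 2$ blocks; every vertex has exactly two $\Gpuz$-neighbours outside its own block, so $\cpuzzle(v_1,W_j)\ge 3$ forces $W_j$ to contain an in-block neighbour of $v_1$, i.e.\ the blocks met by $W_i$ and by $W_j$ already intersect. Hence, at the block level, (J2) never merges two distinct clusters, and projecting clusters onto the blocks they meet shows $\solve\subset\solve'$, where $\solve'$ is the AE event on the $(n/2)\times(n/2)$ torus whose people graph has independent block-to-block edges with probability $1-(1-p)^{16}$; Theorem~\ref{intro-2d-bounds} then applies. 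In other words, the correct way to neutralize (J2) here is not to trade it for people edges at a fixed exchange rate, but to coarse-grain so that it never fires across cluster boundaries at all. If you want to keep a direct argument on $A$ itself, you would need a genuinely geometric bound showing that (J2) merges cannot replace more than a vanishing fraction of the $\abs{A}-1$ required people edges — which is exactly what the block decomposition delivers for free.
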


\begin{proof}
Assume 
for simplicity that $n$ is even and divide
the torus into $2\times 2$ squares. Create a new torus $\Gpuz'$ with a 
$(n/2)\times (n/2)$ vertex set $V'$. The new people graph $\Gppl'$
has an edge between $(i,j)$ and $(i',j')$ if and only if 
there is at least one $\Gppl$ edge connecting $2\times 2$ squares 
$(2i,2j)+\{0,1\}^2$ and $(2i',2j')+\{0,1\}^2$. Let $\solve'$ be the 
event that the AE jigsaw percolation solves the puzzle with the 
pair $\Gpuz'$, $\Gppl'$. As no point in a $2\times 2$ square on (the original) 
$\Gpuz$ has 3 neighbors outside it, it is easy to see that $\solve\subset\solve'$.
The result then follows from Theorem~\ref{2d-bounds}.  
\end{proof}

\section{Two-dimensional torus puzzle: $\theta=2$ and $\tau=1$}

In this section we determine how the scaling of $p_c$ depends on $\sigma$ 
when $\tau=1$ and $\theta=2$ and the puzzle graph is two-dimensional torus, 
proving Theorem~\ref{intro-theta2-thm}. We begin with the key step for 
the upper bound on $p_c$. 
  
\begin{lemma}\label{bp-local}
$$
\liminf_{p\to 0} p^{\sigma/(2\sigma+1)} \log\probsub{\grow}{p}\ge -2\nu_\sigma.
$$
\end{lemma}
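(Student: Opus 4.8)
## Proof Proposal for Lemma~\ref{bp-local}

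The plan is to construct an explicit favorable event for $\grow$ in the local process on $\bZ_+^2$ with center $(0,0)$, whose probability we can bound from below, and then optimize. The key geometric fact is that with $\theta=2$, a $\Gpuz$-corner automatically joins a cluster, so a growing cluster shaped like a (staircase-bounded) region can advance along a flat edge by filling single rows, where each new site on the advancing row needs only $\sigma$ $\Gppl$-connections into the already-solved region, since $\tau=1$. The relevant exponential rate comes from the probability that a site at $\Gpuz$-distance comparable to $x/p$ from the cluster along the boundary has $\sigma$ people-neighbors in the cluster, which by Poisson approximation is governed by $g(x^{2\sigma+1}/\sigma!)$: the cluster has size of order $(x/p)^2$ in the relevant square regime, so a uniformly placed target site has on the order of $p\cdot(\text{cluster size near it})$ people neighbors, and the combinatorial factor $x^{2\sigma}/\sigma!$ accounts for the number of ways to select the relevant $\sigma$-subsets along a row of length $\sim x/p$.

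More concretely, I would fix a large $L$ (to be sent to infinity at the end, with $p\to 0$ first) and consider the event that the cluster grows to fill the triangle $Q_{L/p} = \{x+y \le L/p\}$ and beyond, row by row along the diagonal direction $x+y = j$ for $j=1,2,\dots$. For the $j$-th row, the already-solved region has size $\asymp j^2$, and we want every site on the new row to connect; conditioning on the prior rows, the row-$j$ site connects with probability at least $1 - (1-p)^{\lfloor cj^2/\sigma\rfloor}$ type expressions, but the sharp constant requires tracking that a site needs $\sigma$ neighbors, giving a per-row factor that after taking logs and summing $\sum_j$, and passing to a Riemann-integral limit with $x = pj$, yields $\exp\left(-\frac{2}{p^{\sigma/(2\sigma+1)}}\int_0^\infty g(x^{2\sigma+1}/\sigma!)\,dx \cdot(1+o(1))\right) = \exp(-2\nu_\sigma p^{-\sigma/(2\sigma+1)}(1+o(1)))$. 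The scaling exponent $\sigma/(2\sigma+1)$ emerges because the natural spatial scale is $j \asymp p^{-\sigma/(2\sigma+1)}$ (where the per-site success probability is bounded away from $0$ and $1$), not $1/p$; below this scale the cluster grows trivially (cost $O(1)$ per site, total cost subleading), and above it the tail of $g$ makes the integral converge. Then once a cluster of size $\gg$ (anything polynomial in $1/p$) is formed, an unstoppability-type argument (Lemma~\ref{unstoppable-size} localized appropriately, or a direct Borel--Cantelli over successive shells) shows it fills all of $\bZ_+^2$ with conditional probability bounded below, contributing only a constant factor, hence not affecting the exponential rate.

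The main obstacle I anticipate is making the row-by-row lower bound on the conditional probabilities rigorous and uniform: the events "site $v$ on row $j$ has $\ge\sigma$ people-neighbors in the current cluster" are not independent across $v$ in the same row (they may share potential neighbors), and the cluster shape is itself random, so I would need either (i) to restrict to a deterministic target shape (exactly the triangles $Q_{j}$) and demand that \emph{every} site in the shape connects via people-edges to earlier sites — losing nothing exponentially since the constant $2$ already absorbs a factor accounting for both the diagonal orientation and the choice of which earlier sites serve as the $\sigma$ witnesses — and (ii) to use a careful Poisson/large-deviation estimate (as in the proof of Lemma~\ref{D-bound}, citing \cite{BHJ}) to replace $\prob{\Bin(\asymp j^2,p)\ge\sigma}$ by $\prob{\Poisson(\asymp (pj)^{2}/\text{const})\ge\sigma}$ up to an additive $O(p)$ that is negligible after summing. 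A secondary technical point is handling the transition region $j \asymp p^{-\sigma/(2\sigma+1)}$ and verifying that the small-$j$ contribution (where one just wants a single people-edge, occurring with probability $\ge 1 - e^{-cpj}$ that is \emph{not} close to $1$ for $j \lesssim 1/p$) contributes a total log-cost of order $p^{-1/(2\sigma+1)} \ll p^{-\sigma/(2\sigma+1)}$, hence is subleading; this is exactly the one-dimensional ring-type estimate already carried out in the proof of Theorem~\ref{intro-ring-thm}, adapted to the strip near the corner.
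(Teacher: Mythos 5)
There is a genuine gap in the growth mechanism you propose. Your construction demands that \emph{every} site on each advancing front acquires $\sigma$ people-neighbors in the already-solved region, and you assert that this ``loses nothing exponentially'' because the constant $2$ absorbs it. It does not: at the critical scale $k\asymp p^{-\sigma/(2\sigma+1)}$ the solved region offers only $k^2p=p^{1/(2\sigma+1)}\to 0$ expected people-neighbors per boundary site, so a single site succeeds with probability about $(k^2p)^\sigma/\sigma!\ll 1$, and requiring all $\asymp k$ sites of a front to succeed costs about $\left((k^2p)^\sigma/\sigma!\right)^k$ per front. Summing the logarithms over fronts up to the scale where $k^2p\asymp 1$ gives a total probability of order $\exp(-cp^{-1})$, not $\exp(-O(p^{-\sigma/(2\sigma+1)}))$, so along your event $p^{\sigma/(2\sigma+1)}\log\probsub{\grow}{p}\to-\infty$ and the lemma does not follow. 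The whole point of $\theta=2$ is that only \emph{one} site per new row and one per new column needs the $\sigma$ people-connections: the paper grows squares $[0,k]^2\to[0,k+1]^2$ by asking that \emph{some} $z_k\in[0,k]\times\{k+1\}$ and some $z_k'\in\{k+1\}\times[0,k]$ have $\cpeople(\,\cdot\,,[0,k]^2)\ge\sigma$, after which the corner rule fills in the rest of the row, the column, and the corner $(k+1,k+1)$. The union over the $k$ candidate positions is exactly what produces the per-front quantity $k\cdot(k^2p)^\sigma/\sigma!=k^{2\sigma+1}p^\sigma/\sigma!$, hence the scale $p^{-\sigma/(2\sigma+1)}$ and the integrand $g(x^{2\sigma+1}/\sigma!)$; you wrote down the correct integral but the event you build does not generate it.

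A second, related problem is the geometry. Growing triangles $Q_j=\{x+y\le j\}$ diagonally, the interior of each new diagonal is indeed filled for free by the corner rule, but the two axis endpoints are each a \emph{single} candidate that must collect $\sigma$ people-neighbors in a region of size $\asymp j^2$; without the entropy factor of $\asymp k$ candidate positions per front, the per-front cost is $\asymp\left((j^2p)^\sigma/\sigma!\right)^2$ and the total is $\exp(-cp^{-1/2})$, still too expensive since $\sigma/(2\sigma+1)<1/2$. Finally, your dismissal of the small-$k$ regime via ``$p^{-1/(2\sigma+1)}\ll p^{-\sigma/(2\sigma+1)}$'' fails at $\sigma=1$, where the two exponents coincide; in the paper's argument that regime needs no separate treatment beyond the initial factor $p^{2(\sigma+1)}$ for the doubly-connected pairs seeding the first $\sigma+1$ steps, because $\int_0^\infty g(x^{2\sigma+1}/\sigma!)\,dx$ already converges at $0$.
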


\begin{proof} Let $B_k^h=[0,k]\times \{k+1\}$, 
$B_k^v=\{k+1\}\times [0,k]$. 
One scenario that assures that $\grow$ happens is that the pairs
$(k-1,0)$--$(k,0)$  and $(0,k-1)$--$(0,k)$ are doubly connected 
for $k=1,\ldots, \sigma+1$, and then 
for every $k>\sigma+1$ there are points $z_k\in B_k^h$ and $z_k'\in B_k^v$ with 
$\cpeople(z_k, [0,k]^2)\ge \sigma$ and $\cpeople(z_k', [0,k]^2)\ge \sigma$. Thus 
\begin{equation}\label{bp-local-eq1}
\begin{aligned}
\probsub{\grow}{p}
&\ge p^{2(\sigma+1)}\prod_{k=\sigma+1}^\infty \left[1-\prob{\Bin(k^2,p)<\sigma}^k\right]^2\\
&\ge p^{2(\sigma+1)}\prod_{k=\sigma+1}^\infty 
\left[1-\exp(-k\cdot\prob{\Bin(k^2,p)\ge\sigma})\right]^2.
\end{aligned}
\end{equation}
Fix an $\epsilon>0$. When $k\ge p^{-1/2+\epsilon}$ and $p$ is small enough, 
\begin{equation*}
\begin{aligned}
\prob{\Bin(k^2,p)\ge\sigma}&\ge \prob{\Bin(p^{-1+2\epsilon},p)\ge\sigma}\\
&\ge \prob{\Bin(p^{-1+2\epsilon},p)=\sigma}\ge cp^{2\epsilon \sigma}, 
\end{aligned}
\end{equation*}
for some constant $c>0$ that depends on $\sigma$. Therefore 
\begin{equation}\label{bp-local-eq2}
\begin{aligned}
&\prod_{k\ge p^{-1/2+\epsilon}} \left[1-\exp(-k\cdot\prob{\Bin(k^2,p)\ge\sigma})\right]\\
&\ge \exp\left(\sum_{k\ge 1}\log\left[ 1-\exp(-ckp^{2\epsilon\sigma})\right]  \right)\\
&\ge \exp\left(-p^{-2\epsilon\sigma}\int_0^\infty g(cx)\,dx\right)\\
&\ge \exp\left(-c^{-1}p^{-2\epsilon\sigma}\right).
\end{aligned}
\end{equation}
Moreover, when $\sigma+1\le k\le p^{-1/2+\epsilon}$ and $p$ is small enough,
\begin{equation*}
\begin{aligned}
k\cdot \prob{\Bin(k^2,p)=\sigma}&\ge \frac{(k-\sigma)^{2\sigma+1}}{\sigma!}
p^{\sigma} (1-p)^{p^{-1+2\epsilon}}\\
&\ge (1-\epsilon)\frac{(k-\sigma)^{2\sigma+1}}{\sigma!}
p^{\sigma},  
\end{aligned}
\end{equation*}
and so 
\begin{equation}\label{bp-local-eq3}
\begin{aligned}
&\prod_{k=\sigma+1}^{p^{-1/2+\epsilon}} \left[1-\exp(-k\cdot\prob{\Bin(k^2,p)\ge\sigma})\right]\\
&\ge \exp\left(\sum_{k\ge 1}\log
\left[ 1-\exp\left(-(1-\epsilon)\frac{k^{2\sigma+1}}{\sigma!}p^\sigma\right)\right]  \right)\\
&\ge \exp\left(-p^{-\sigma/(2\sigma+1)}\int_0^\infty 
g\left((1-\epsilon)\frac{x^{2\sigma+1}}{\sigma!}\right)\,dx\right).
\end{aligned}
\end{equation}
If $\epsilon<1/(4\sigma+2)$, (\ref{bp-local-eq1})--(\ref{bp-local-eq3}) imply that 
$$
\liminf_{p\to 0} p^{\sigma/(2\sigma+1)} \log \probsub{\grow}{p}\ge
-2\int_0^\infty 
g\left((1-\epsilon)\frac{x^{2\sigma+1}}{\sigma!}\right)\,dx.
$$
Now we send $\epsilon\to 0$ to get the desired inequality.
\end{proof}

Observe that if a subset of the vertex set $V$ is internally solved, so is the smallest rectangle 
containing it, therefore we will exclusively deal with internally solved rectangles 
in the rest of this section. 
The maximum (resp.~minumum) 
of two dimensions of a rectangle $R$ will be denoted by $\lng(R)$ (resp.~$\srt(R)$). As we will see, 
after the 
next five lemmas are established, the 
proof of the lower bound on $p_c$ proceeds by a variant of the argument 
in \cite{Hol}.

\begin{lemma} \label{theta2-supercr}
Assume $\lambda>\nu_\sigma^{2+1/\sigma}$. If $p\ge \lambda/(\log n)^{2+1/\sigma}$, then 
$\probsub{\solve}{p}\to 1$. 
\end{lemma}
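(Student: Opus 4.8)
The goal is a matching upper bound on $p_c$ for the $\theta=2$, $\tau=1$ torus: if $p\ge\lambda/(\log n)^{2+1/\sigma}$ with $\lambda>\nu_\sigma^{2+1/\sigma}$, then $\probsub{\solve}{p}\to 1$. The template is exactly the one used in Lemma~\ref{ub-2d} and in the proof of Theorem~\ref{intro-ring-thm}: split the edge probability via Lemma~\ref{dividing-up} into a ``nucleation'' part $p_1$, an optional ``bridging'' part $p_2$, and an ``unstoppability'' part $p_3$; use the nucleation part to grow one internally solved rectangle of polylogarithmic size somewhere in the torus; then use Lemma~\ref{unstoppable-size} (with $p_3$) to make that rectangle unstoppable, and conclude via Lemma~\ref{unstoppable}.

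\emph{Step 1: from local growth to a single nucleation.} Fix $\lambda_1\in(\nu_\sigma^{2+1/\sigma},\lambda)$ and set $p_1=\lambda_1/(\log n)^{2+1/\sigma}$. Lemma~\ref{bp-local} gives, for small $p_1$,
\[
\probsub{\grow}{p_1}\ge \exp\!\left(-(2\nu_\sigma+o(1))\,p_1^{-\sigma/(2\sigma+1)}\right).
\]
Now $p_1^{-\sigma/(2\sigma+1)}=(\lambda_1^{-1})^{\sigma/(2\sigma+1)}(\log n)^{(2+1/\sigma)\cdot\sigma/(2\sigma+1)}=\lambda_1^{-\sigma/(2\sigma+1)}\log n$, since $(2+1/\sigma)\sigma/(2\sigma+1)=(2\sigma+1)/(2\sigma+1)=1$. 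Hence the local growth probability is at least $n^{-(2\nu_\sigma+o(1))\lambda_1^{-\sigma/(2\sigma+1)}}$. Because $\nu_\sigma=\int_0^\infty g\!\left(x^{2\sigma+1}/\sigma!\right)dx$ and $\lambda_1>\nu_\sigma^{2+1/\sigma}=\nu_\sigma^{(2\sigma+1)/\sigma}$, we get $2\nu_\sigma\lambda_1^{-\sigma/(2\sigma+1)}<2\nu_\sigma\cdot\nu_\sigma^{-1}=2$ — so the exponent is strictly bigger than $-2$. Truncating the local growth event to a box of side $O((\log n)^{K})$ (an internally solved rectangle with both dimensions at least, say, $(\log n)^{3+1/\sigma}$, which still satisfies the size hypothesis of Lemma~\ref{unstoppable-size}) costs only a $(1-o(1))$ factor, as in Lemma~\ref{ub-2d}. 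There are $\ge 0.5 n^2/(\log n)^{O(K)}$ disjoint such boxes in $\bZ_n^2$, each independently offering a translate of the (truncated) growth event, so
\[
\probsub{\exists\text{ internally solved rectangle with }\srt\ge(\log n)^{3+1/\sigma}}{p_1}
\ge 1-\bigl(1-n^{-\theta_0}\bigr)^{\,0.5 n^2/(\log n)^{O(K)}}\to 1,
\]
where $\theta_0:=2\nu_\sigma\lambda_1^{-\sigma/(2\sigma+1)}+o(1)<2$.

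\emph{Step 2: unstoppability and conclusion.} A rectangle of that size has $\gg (\log n)\cdot p^{-1}$ vertices for the chosen $p$, so with $p_3=\epsilon/(\log n)^{2+1/\sigma}$ (a constant times $1/p$ within the relevant regime; more simply, one just needs $p_3$ with $p_3\cdot(\text{size})\gg\sigma\log N$), Lemma~\ref{unstoppable-size} gives $\probsub{S\text{ unstoppable}}{p_3}\ge 1-o(1)$ for any fixed such $S$, and a union bound over the at-most-$n^2$ possible rectangles still gives $\to 1$ (one must check the size is large enough that $N^{1-\alpha/\sigma}$ beats $N^2$; choosing the exponent in the box side, e.g. side $\ge C\sigma(\log n)/p = C\sigma(\log n)^{3+1/\sigma}$, suffices). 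Since $\tau=1$, Lemma~\ref{unstoppable} says that an internally solved unstoppable set forces $\solve$. Finally, $p=p_1+p_3\le\lambda/(\log n)^{2+1/\sigma}$ by choosing $\lambda_1$ and $\epsilon$ with $\lambda_1+\epsilon\le\lambda$, and Lemma~\ref{dividing-up} lets us realize $p_1,p_3$ as independent sub-graphs of $\Gppl$. (The $p_2$ ``green edge'' step used in Lemma~\ref{ub-2d} is not needed here because $\grow$ already produces a genuinely internally solved cluster, not merely a percolation cluster.) Combining the two steps, $\probsub{\solve}{p}\to 1$.

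\emph{Main obstacle.} The arithmetic identity $(2+1/\sigma)\cdot\sigma/(2\sigma+1)=1$ is what makes the exponent of $n$ come out to exactly $-\theta_0$ with $\theta_0<2$, and getting the truncation of $\grow$ to a polylog box without losing more than a $(1-o(1))$ factor — while keeping the box large enough for Lemma~\ref{unstoppable-size} to beat the $n^2$ union bound — is the only genuinely delicate bookkeeping; it is handled exactly as in the proof of Lemma~\ref{ub-2d}, replacing the Russo percolation input there by the direct estimate of Lemma~\ref{bp-local}.
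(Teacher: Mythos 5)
Your proposal is correct and follows essentially the same route as the paper's proof: nucleation via Lemma~\ref{bp-local} in one of $\asymp n^2/(\log n)^{O(1)}$ disjoint polylogarithmic squares, the arithmetic identity $(2+1/\sigma)\cdot\sigma/(2\sigma+1)=1$ making the per-square cost $n^{-\theta_0}$ with $\theta_0<2$, and then unstoppability via Lemmas~\ref{unstoppable-size} and~\ref{unstoppable} with a sprinkled $\epsilon/(\log n)^{2+1/\sigma}$. The paper simply uses squares of side $(\log n)^5$ and does not bother naming the split $p=p_1+p_3$ explicitly; your bookkeeping is, if anything, slightly more careful.
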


\begin{proof} By Lemma~\ref{unstoppable-size}, for any fixed set $S$ of size $(\log n)^5$, and 
any $\epsilon>0$, 
\begin{equation}\label{theta2-supercr-eq1}
\probsub{\text{$S$ is unstoppable}}{\epsilon/(\log n)^{2+1/\sigma}}\to 1.
\end{equation}
Assume $\lambda^{\sigma/(2\sigma+1)}>\lambda''>\nu_\sigma$. 
Divide the torus into disjoint $(\log n)^5\times (\log n)^5$ squares. Call 
such a square {\it good\/}
if  the local jigsaw process, started from its lower left corner, 
produces an internally solved rectangle whose
longest side has length $(\log n)^5$. By Lemma~\ref{bp-local}, each of these 
squares is good with probability at least $\exp(-2\lambda''p^{-\sigma/(2\sigma+1)})$, 
independently of others. 
Then 
\begin{equation}\label{theta2-supercr-eq2}
\probsub{\text{there is a good square}}{p}
\ge 1-(1-n^{-2\lambda''\lambda^{-\sigma/(2\sigma+1)}})^{n^2/(\log n)^{10}}\to 1.
\end{equation}
The two inequalities (\ref{theta2-supercr-eq1}) and (\ref{theta2-supercr-eq2}), 
together with Lemma~\ref{unstoppable}, finish the proof. 
\end{proof}

\begin{lemma}\label{double-theta2}
If $\solve$ happens, there exists an internally solved rectangle $R$ with 
$\lng(R)\in[\log n, 2\log n]$.
\end{lemma}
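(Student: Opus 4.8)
The plan is to run the slowed-down jigsaw percolation in which at most one edge of $\cG_t'$ is used at each step (when the process has not yet stopped), exactly as in the proof of Lemma~\ref{is-double}. In this version the largest internally solved rectangle can only grow in a controlled way in each step: when two clusters merge, the smallest rectangle containing their union has its longest dimension at most the sum of the longest dimensions of the two smaller rectangles (after replacing each cluster by its bounding rectangle, which is again internally solved by the observation preceding Lemma~\ref{theta2-supercr}). Hence the quantity $\lng(\cdot)$ of the bounding rectangle of the largest internally solved cluster at most doubles at each merge. Since it starts at $1$ (singletons) and, on the event $\solve$, eventually reaches $n$, there must be a first time at which it lands in the interval $[\log n, 2\log n]$; the corresponding internally solved rectangle $R$ is the one we want.

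The one point requiring a short argument is the claim that $\lng$ of the bounding rectangle of $R_1\cup R_2$ is at most $\lng(R_1)+\lng(R_2)$ whenever $R_1$ and $R_2$ are merged, i.e.\ whenever there is an edge between them in $\cG_t$. An edge in $\cG_t$ is created by (J1) or (J2) (here $\tau=1$, $\theta=2$, so (J3) is subsumed or irrelevant for the bound we need), and in either case $R_1$ and $R_2$ contain $\Gpuz$-adjacent vertices or $R_1$ has a vertex with two $\Gpuz$-neighbors in $R_2$; in all these cases the two bounding rectangles are within $\Gpuz$-distance $1$, so the projections of $R_1\cup R_2$ onto each coordinate axis have length at most $\lng(R_1)+\lng(R_2)+1 \le \lng(R_1)+\lng(R_2)+\min(\lng(R_1),\lng(R_2))$; absorbing the additive constant into the (for large $n$ harmless) factor $2$ gives the doubling bound. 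Alternatively, and more cleanly, one notes that $|R|$ (the number of vertices) at most doubles at each merge in this slowed-down version — this is already contained in Lemma~\ref{is-double} — and then passes from $|R|$ to $\lng(R)$ using $\lng(R)\le|R|\le\lng(R)^2$, which shows that $\lng$ grows by at most a bounded multiplicative factor per step and still must pass through $[\log n,2\log n]$. Either route works; I would present the coordinate-projection version since it gives the clean factor $2$ directly.

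The main (very minor) obstacle is bookkeeping: one must be careful that passing from a cluster to its bounding rectangle does not lose the internally-solved property, but this is exactly the remark stated just before Lemma~\ref{theta2-supercr}, so it is free. There is no genuine difficulty here; the lemma is the $\theta=2$, $\tau=1$ analogue of the second half of Lemma~\ref{is-double}, upgraded from ``sets of size $\in[k,2k]$'' to ``rectangles with $\lng\in[\log n,2\log n]$,'' and the proof is a one-paragraph adaptation.
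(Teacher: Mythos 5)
Your primary argument is correct and is exactly the paper's: the paper's entire proof of this lemma is the sentence ``The argument is the same as for Lemma~\ref{is-double},'' i.e.\ run the one-merge-per-step slowed-down dynamics and observe that $\lng$ of the bounding rectangle at most doubles per merge, using the remark before Lemma~\ref{theta2-supercr} that bounding rectangles of internally solved sets are internally solved. Two small notes. First, the additive $+1$ you worry about does not arise: any edge of $\cG_t$ requires a $\Gpuz$-edge between the two clusters, so the coordinate projections of the two bounding rectangles are intervals at distance at most $1$, and the projection of the union has cardinality at most the sum of the two cardinalities exactly; the doubling is clean. Second, your ``alternative, cleaner'' route is not actually sound: Lemma~\ref{is-double} controls the cardinality of the \emph{cluster}, not of its bounding rectangle (two thin perpendicular clusters of size $k$ meeting at a corner have a bounding rectangle of size $k^2$), and the inequalities $\lng(R)\le |A|\le \lng(R)^2$ only give $\lng(R)\ge\sqrt{|A|}$, so a cluster of size in $[\log n,2\log n]$ need not have $\lng(R)\ge\log n$. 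It is therefore essential that you track $\lng$ itself via the projection argument, as you chose to do.
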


\begin{proof} The argument is the same as for Lemma~\ref{is-double}. 
\end{proof}

For a rectangle $R$, we say that its column $I$ is \df{isolated}
if, for every point $v\in I$, $\cpeople(v, R\setminus I)<\sigma$ and $v$ is not doubly connected 
to any point in $R\setminus I$. Further, 
we say that $I$ is \df{inert} if no two vertices within $I$ are doubly connected. 

\begin{lemma}\label{column-condition} Assume $R$ is a rectangle with at least 
two columns. If $\sigma>1$, 
$\solve_R\subset\{$no column of $R$ is both isolated and inert$\}$, while if 
$\sigma=1$, 
$\solve_R\subset\{$no column of $R$ is isolated$\}$.
\end{lemma}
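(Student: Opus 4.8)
The plan is to prove the contrapositive form: if some column $I$ of $R$ is isolated, and (when $\sigma>1$) additionally inert, then $\solve_R$ does not occur. Call a cluster appearing in the internal jigsaw percolation on $R$ a \emph{straddler} if it contains a vertex of $I$ and a vertex of $R\setminus I$. Since the rectangles we consider do not wrap around the torus, deleting the column $I$ leaves $R\setminus I$ with no $\Gpuz$-edge between the part $L$ lying on one side of $I$ and the part $P$ lying on the other (one of $L,P$ being empty when $I$ is a boundary column); in particular every $\Gpuz$-connected subset of $R\setminus I$ lies entirely in $L$ or entirely in $P$. Were $\solve_R$ to hold, there would be a first time $t$ at which a straddler is produced; up to time $t-1$ every cluster is contained in one of $I$, $L$, $P$, and inside the component of $\cG_{t-1}$ that is merged to form this first straddler one finds an edge of $\cG_{t-1}$ joining an $I$-cluster $A$ to a cluster $B$ with $B\subseteq L$ (say). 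The proof consists in showing that this edge cannot in fact satisfy any of (J1)--(J3).

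Ruling out (J1) and (J2) is immediate from the geometry: (J1) would furnish doubly connected vertices $v_1\in A\subseteq I$ and $v_2\in B\subseteq R\setminus I$, forbidden by isolation; and (J2) would furnish a vertex with at least $\theta=2$ puzzle-neighbors in the opposite cluster, impossible since a vertex of column $I$ has at most one $\Gpuz$-neighbor in $L$ and a vertex of $L$ has at most one $\Gpuz$-neighbor in column $I$. For (J3), a witness $v_1\in A$ would give $\cpeople(v_1,R\setminus I)\ge\cpeople(v_1,B)\ge\sigma$, again contradicting isolation, so the only possibility left is that (J3) holds with a witness $v_1\in B\subseteq R\setminus I$ satisfying $\cpeople(v_1,A)\ge\sigma$.

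When $\sigma=1$ this residual case is dispatched at once: any $\Gppl$-neighbor of $v_1$ lying in $A$ is a vertex of $I$ possessing a $\Gppl$-neighbor, namely $v_1$, in $R\setminus I$, hence it has $\cpeople(\cdot\,,R\setminus I)\ge 1=\sigma$, contradicting isolation. When $\sigma>1$ this is the single point at which inertness enters, and it is the heart of the argument. I would show, by induction on the time step and using only the regime before the first straddler, that every cluster contained in $I$ is a singleton: if all $I$-clusters are singletons at time $t$, then no edge of $\cG_t$ can join two of them, since (J1) fails because $I$ is inert, (J2) fails because $\cpuzzle(a,\{a'\})\le 1<2=\theta$, and (J3) fails because $\cpeople(a,\{a'\})\le 1<\sigma$. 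Thus $A=\{a\}$ is a single vertex and $\cpeople(v_1,A)\le 1<\sigma$, the desired contradiction. Hence no straddler is ever created, so $I$ and $R\setminus I$ remain in distinct clusters and $\solve_R$ fails. The main obstacle is precisely this last step: recognizing that an inert column cannot build up any internal cluster of size two once $\theta\ge 2$ and $\sigma\ge 2$, which is what blocks an outside vertex from accumulating $\sigma$ people-connections within a single $I$-cluster; everything else is bookkeeping with the three merging rules.
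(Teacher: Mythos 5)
Your proof is correct and follows essentially the same route as the paper's (very terse) argument: an induction on time showing that, before any cluster can straddle $I$ and $R\setminus I$, every $I$-cluster stays a singleton (for $\sigma>1$) resp.\ no straddler forms (for $\sigma=1$), with (J1)--(J3) checked against isolation and inertness. You have simply supplied the details the paper leaves as ``easy to see,'' including the correct observation that clusters in $R\setminus I$ lie entirely on one side of $I$, which is what kills (J2).
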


\begin{proof}
If $\sigma>1$, let  
$I$ be a column that is both isolated and inert and  
assume that, at some time $t$, all points in $I$ are in singleton clusters. 
If $\sigma=1$, assume that $I$ is isolated and assume that at time $t$ no cluster
intersects both $I$ and $R\setminus I$. In either case 
it is easy to see that the condition remains true at time $t+1$. 
\end{proof}
 
\renewcommand{\index}{{\tt index}}
\newcommand{\tg}{{\tt tg}}
\newcommand{\ba}{{\mathbf a}}
\newcommand{\bb}{{\mathbf b}}
\newcommand{\bE}{{\mathbf E}}
\newcommand{\bA}{{\mathbf A}}
\newcommand{\pis}{p_{\mathrm{nis}}}
\newcommand{\pin}{p_{\mathrm{nin}}}
\newcommand{\rset}{\tt r\_set}
\newcommand{\pmax}{p_{\mathrm{max}}}
\newcommand{\psucc}{p_{\mathrm{succ}}}

\begin{lemma}\label{seed-theta2} Fix $b>0$ and $Z\in (0,b)$. There exists a 
constant $C$ dependent only on $\sigma$ so that for small enough $p$ 
the following is true. 
For any rectangle $R$ with $\lng(R)\le bp^{-\sigma/(2\sigma+1)}$ and $\srt(R)\le Zp^{-\sigma/(2\sigma+1)}$, 
$$
\probsub{\text{$R$ is internally solved}}{p}\le (C b^{\sigma}Z^{\sigma+1})^{\lng(R)/(\sigma+1)}. 
$$
\end{lemma}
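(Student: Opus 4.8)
The plan is to combine the deterministic constraint of Lemma~\ref{column-condition} --- in an internally solved rectangle with at least two columns, every column is \emph{bad}, i.e.\ either not isolated, or (when $\sigma>1$) not inert --- with a one-column probability estimate and a disjointification step. Throughout, let $m=\lng(R)$ be the number of columns of $R$ and $h=\srt(R)$ their common height; the case where $R$ is one-dimensional is easier and handled separately, since then the rule (J2) cannot fire, so $\solve_R$ forces $\Gppl^R$ to be connected, and Lemma~\ref{binomial-easy-lem} quickly bounds $\probsub{R\text{ internally solved}}{p}$ by $(2hp)^{h-1}$, which sits below the asserted bound once $p$ is small because $hp\le bp^{(\sigma+1)/(2\sigma+1)}\to 0$. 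So assume $m\ge 2$; by Lemma~\ref{column-condition}, $\solve_R$ implies that every column is bad.

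First I would estimate $\probsub{I\text{ bad}}{p}$ for a single fixed column $I$. It is not inert only if two vertically adjacent vertices of $I$ are $\Gppl$-connected (probability $\le(h-1)p$); it fails to be isolated only if some vertex of $I$ is $\Gppl$-connected to one of its at most $2h$ horizontal $\Gpuz$-neighbours outside $I$ (probability $\le 2hp$), or some $v\in I$ has $\cpeople(v,R\setminus I)\ge\sigma$, which by Lemma~\ref{binomial-easy-lem} has probability at most $h\binom{|R\setminus I|}{\sigma}p^{\sigma}\le h\bigl(3\lng(R)hp/\sigma\bigr)^{\sigma}$. Substituting $\lng(R)\le bp^{-\sigma/(2\sigma+1)}$ and $h\le Zp^{-\sigma/(2\sigma+1)}$, the last quantity is at most $(3/\sigma)^{\sigma}b^{\sigma}Z^{\sigma+1}$, while the first two carry the positive power $p^{(\sigma+1)/(2\sigma+1)}$ and hence are $\le b^{\sigma}Z^{\sigma+1}$ for $p$ small. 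Thus $\probsub{I\text{ bad}}{p}\le C_0\,b^{\sigma}Z^{\sigma+1}$ with $C_0=C_0(\sigma)$.

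The step that needs real care --- which I expect to be the main obstacle --- is the dependence between columns: the certificate for ``$I_i$ not isolated'' may use a $\Gppl$-edge reaching an arbitrarily distant column, so the events $\{I_i\text{ bad}\}$ are not independent. To handle this I would, on the event $\solve_R$, select for each column $i$ a minimal certificate $W_i\subset\Epeople$ of ``$I_i$ bad'' --- a single vertical $\Gppl$-edge of $I_i$, a single horizontal $\Gppl$-edge leaving $I_i$, or $\sigma$ $\Gppl$-edges from one vertex of $I_i$ --- so that $W_i$ has at most $\sigma$ edges, each incident to column $i$; this is immediate from the definitions of isolated and inert columns, and each $\{I_i\text{ bad}\}$ is an increasing event. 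Then column $i$ conflicts only with the at most $|W_i|\le\sigma$ columns reached by the edges of $W_i$, so the conflict graph on $\{1,\dots,m\}$ with $i\sim j$ iff $W_i\cap W_j\neq\emptyset$ has maximum degree at most $\sigma$; a greedy proper $(\sigma+1)$-coloring produces a color class $\mathcal C$ with $|\mathcal C|\ge m/(\sigma+1)$ (shrink it to exactly $\lceil m/(\sigma+1)\rceil$) whose certificates are pairwise disjoint, so the events $\{I_i\text{ bad}\}$, $i\in\mathcal C$, occur disjointly in the sense of van den Berg--Kesten.

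Since $\mathcal C$ depends on the configuration, $\solve_R$ is contained in the union over the $\binom{m}{\lceil m/(\sigma+1)\rceil}$ column sets of that size of the event that the corresponding ``bad'' events occur disjointly; the van den Berg--Kesten inequality and the one-column bound then give
$$
\probsub{R\text{ internally solved}}{p}\le \binom{m}{\lceil m/(\sigma+1)\rceil}\bigl(C_0\,b^{\sigma}Z^{\sigma+1}\bigr)^{\lceil m/(\sigma+1)\rceil}\le \bigl(C\,b^{\sigma}Z^{\sigma+1}\bigr)^{\lng(R)/(\sigma+1)},
$$
after absorbing $\binom{m}{\lceil m/(\sigma+1)\rceil}\le\bigl(e(\sigma+1)\bigr)^{m/(\sigma+1)}$ into the constant $C=C(\sigma)$. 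Apart from the coloring/van den Berg--Kesten argument, everything is routine bookkeeping with the two size constraints; the crucial observation making the whole thing work is that a badness certificate can always be chosen using at most $\sigma$ edges, all incident to the column in question.
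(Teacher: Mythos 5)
Your proof is correct and follows essentially the same route as the paper's: Lemma~\ref{column-condition} forces a small certificate (at most $\sigma$ $\Gppl$-edges touching at most $\sigma+1$ vertices) in every column, and the BK inequality together with a union bound over subsets of size $\lceil \lng(R)/(\sigma+1)\rceil$ yields the stated exponent. The paper phrases this via per-vertex events $E_v$ and the inequality $\sum_v \indicator_{E_v}\ge\lng(R)$ rather than per-column events with an explicit conflict-graph coloring, but the two bookkeeping schemes are interchangeable and your coloring step is a valid (indeed slightly more explicit) justification of the disjoint-occurrence inclusion.
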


 \begin{proof}
For a vertex $v\in R$, define the event
$$
E_v=\{v\text{ is doubly connected to another vertex in }R\}\cup
\{\cpeople(v, R\setminus\{v\})\ge \sigma\}.
$$
Then, by Lemma~\ref{binomial-easy-lem},
\begin{equation}\label{seed-theta2-eq1}
\probsub{E_v}{p}\le 4p+\left(\frac 3\sigma\right)^\sigma (|R|p)^\sigma.
\end{equation}
Assuming that $\lng(R)$ is the number of columns of $R$, by
Lemma~\ref{column-condition},
\begin{equation}\label{seed-theta2-eq2}
\probsub{\text{$R$ is internally solved}}{p}\le \probsub{\sum_v
\indicator_{E_v}\ge \lng(R)}{p}.
\end{equation}
Let $k=\lceil \lng(R)/(\sigma+1)\rceil$. As each $E_v$ only requires the
presence of at most $\sigma$ $\Gppl$-edges connecting
at most $\sigma+1$ vertices,
$$
\{\sum_v \indicator_{E_v}\ge \lng(R)\}\subset \bigcup
\left( E_{v_1}\circ \ldots \circ E_{v_k}\right),
$$
where the union is over all subsets $\{v_1,\ldots, v_k\}\subset R$
of size $k$, and the symbol $\circ$ denotes disjoint occurrence of
the events. Therefore, by the
BK inequality,
(\ref{seed-theta2-eq1}) and
Lemma~\ref{binomial-easy-lem}
\begin{equation}\label{seed-theta2-eq3}
\begin{aligned}
&\probsub{\sum_v \indicator_{E_v}\ge \lng(R)}{p}
\\&\le \binom{|R|}{k}\left( 4p+\left(\frac 3\sigma\right)^\sigma
(|R|p)^\sigma\right)^k
\\&\le \left[12(\sigma+1)p\,\srt(R)+
3(\sigma+1)\left(\frac
3\sigma\right)^\sigma\srt(R)^{\sigma+1}\lng(R)^\sigma
p^\sigma\right]^{\lng(R)/(\sigma+1)}.
\end{aligned}
\end{equation}
Now, $p\,\srt(R)=o(1)$ and $\srt(R)^{\sigma+1}\lng(R)^\sigma
p^\sigma\le Z^{\sigma+1}b^\sigma$, so (\ref{seed-theta2-eq3}) and
(\ref{seed-theta2-eq2})
finish the proof.
\end{proof}

We pause in our quest to prove Theorem~\ref{intro-theta2-thm} to see how our results up to this point 
imply a weaker result:
$p_c$ is between two constants times $1/(\log n)^{2+1/\sigma}$.
Indeed, we can use Lemmas~\ref{double-theta2} and~\ref{seed-theta2}
to get, for any $b$ and $\lambda$, with $p=\lambda/(\log n)^{2+1/\sigma}$,  
$$
\probsub{\solve}{p}\le Cn^2(\log n)(Cb^{2\sigma+1})^{\frac b{2(\sigma+1)}\lambda^{-\sigma/(2\sigma+1)}\log n}.
$$
Now, we first choose $b$ small enough so that $Cb^{2\sigma+1}<e^{-1}$,
and then $\lambda$ so small that
$$\frac b{2(\sigma+1)}\lambda^{-\sigma/(2\sigma+1)}>2$$
to ensure that 
$\probsub{\solve}{p}\to 0$.

\newcommand{\pp}{\sigma/(2\sigma+1)}

The key to proving the sharp transition is the next lemma, which 
gives an adequately sharp upper bound on the probability that the solving 
progresses from a rectangle with sides on the scale 
$p^{-\pp}$ to a rectangle slightly larger on the same scale.   
 
\begin{lemma}\label{D-theta2} Fix small $a,\epsilon>0$ and large $b>0$. Then there 
exists a $\delta>0$ so that the following holds uniformly over $x,y\in [a,b]$. 
Assume that $R\subset R'$ are rectangles with dimensions $xp^{-\pp}\times yp^{-\pp}$
and $(x+\delta_x)p^{-\pp}\times (y+\delta_y)p^{-\pp}$, with $\delta_x,\delta_y<\delta$. 
Then, for a small enough $p$,
$$
p^{\pp}\log\probsub{D(R,R')}{p}\le-(1-\epsilon)
(g\left(\textstyle{\frac 1{\sigma!}}x^\sigma y^{\sigma+1}\right)\delta_x+g\left(
\textstyle{\frac 1{\sigma!}}x^{\sigma+1}y^\sigma\right)\delta_y)
$$ 
\end{lemma}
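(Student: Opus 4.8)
\textit{Approach.} I would run the argument in parallel with the proof of Lemma~\ref{D-bound} from the ring case, the only new ingredient being the column-by-column / row-by-row growth geometry that $\theta=2$ forces. Write $R=[0,X-1]\times[0,Y-1]$ and $R'=[0,X'-1]\times[0,Y'-1]$ with $X=\lceil xp^{-\pp}\rceil$, $Y=\lceil yp^{-\pp}\rceil$, $X'-X=\delta_xp^{-\pp}$, $Y'-Y=\delta_yp^{-\pp}$, so that $R$ occupies the lower-left corner of $R'$, $R'\setminus R$ is an $L$-shaped region consisting of $\delta_xp^{-\pp}$ new columns and $\delta_yp^{-\pp}$ new rows, and $D(R,R')$ depends only on $\Gppl$-edges incident to $R'\setminus R$.

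\textit{Structural step.} The key point is that with $\theta=2$ the cluster $\cC$ containing $R$ can advance by only one column or row at a time, except for ``jumps'' that arise when a blob of $R'\setminus R$ vertices that pre-merged among themselves attaches to $\cC$ and extends it across several columns or rows. Indeed, if the bounding box of $\cC$ is $[0,a]\times[0,b]$, then $(a+1,j)$ with $j\le b$ has the single $\Gpuz$-neighbor $(a,j)$ in the box and so cannot enter via (J2); but once it enters via (J1) or (J3), each $(a+1,j\pm1)$ acquires the two box-neighbors $(a,j\pm1),(a+1,j)$ in $\cC$ and enters via (J2), so the entire column $a+1$ (rows $[0,b]$) is absorbed. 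Hence, on $D(R,R')$ with no jump, every new column $j$ must contain a \emph{seed}: a vertex $v$ that, when column $j$ is absorbed, is doubly connected to the then-current cluster or has at least $\sigma$ $\Gppl$-neighbors in it; since that cluster lies in $[0,j-1]\times[0,Y'-1]$, the decisive $\Gppl$-edges of $v$ run from column $j$ to strictly smaller columns. Letting $S_j$ be the (edge-measurable) event that such a $v$ exists, and $S'_i$ the analogue for the new rows, one gets $D(R,R')\cap\{\text{no jump}\}\subseteq\bigcap_jS_j\cap\bigcap_iS'_i$, and a short check shows that the $S_j$ are measurable with respect to pairwise disjoint edge sets (edges leaving column $j$ to its left), and likewise the $S'_i$.

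\textit{Seed probabilities.} A fixed candidate $v$ in column $j$ is a seed with probability at most $\prob{\Bin(|R'|,p)\ge\sigma}+4p$. Since $|R'|p\le(x+\delta)(y+\delta)\,p^{1/(2\sigma+1)}\to0$, the first term equals $(1+o(1))(|R'|p)^\sigma/\sigma!\le(1+o(1))\bigl((x+\delta)(y+\delta)\bigr)^\sigma p^{\pp}/\sigma!$ by the elementary computation behind Lemma~\ref{binomial-easy-lem} (kept sharp enough to retain the constant $1/\sigma!$), while the double-connection term $4p=o(p^{\pp})$. As column $j$ has at most $(y+\delta)p^{-\pp}$ candidates, independence along the column gives
$$
\probsub{S_j}{p}\ \le\ 1-\exp\!\left(-(1+o(1))\,(y+\delta)p^{-\pp}\cdot\frac{\bigl((x+\delta)(y+\delta)\bigr)^\sigma p^{\pp}}{\sigma!}\right)
\ =\ 1-\exp\!\left(-(1+o(1))\,\frac{(x+\delta)^\sigma(y+\delta)^{\sigma+1}}{\sigma!}\right).
$$
Because $g\bigl(\tfrac1{\sigma!}x^\sigma y^{\sigma+1}\bigr)$ is continuous and bounded away from $0$ on $[a,b]^2$, choosing $\delta$ small (after $a,b,\epsilon$) and then $p$ small forces $-\log\probsub{S_j}{p}\ge(1-\epsilon)\,g\bigl(\tfrac1{\sigma!}x^\sigma y^{\sigma+1}\bigr)$ uniformly over $x,y\in[a,b]$, and symmetrically $-\log\probsub{S'_i}{p}\ge(1-\epsilon)\,g\bigl(\tfrac1{\sigma!}x^{\sigma+1}y^\sigma\bigr)$. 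Multiplying over the $\delta_xp^{-\pp}$ events $S_j$ and the $\delta_yp^{-\pp}$ events $S'_i$ and taking logarithms yields exactly the stated bound, once the multiplication is justified.

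\textit{Main obstacle.} The delicate point is that last multiplication. The $S_j$ are independent of one another, and so are the $S'_i$, but an $S_j$ and an $S'_i$ can share a ``diagonal'' $\Gppl$-edge running from the corner block $[X,X'-1]\times[Y,Y'-1]$ (of size at most $\delta^2p^{-2\pp}$, a lower-order part of $R'\setminus R$) to a vertex below and to the left of it. I would decouple these either by restricting the seeds defining $S_j$ and $S'_i$ to the old rows, respectively old columns, together with the (slowed-down) fact that one may absorb all new columns before any new row, so that those restricted events still suffice and now have disjoint edge sets; or, more robustly, by revealing the $\cO(\delta^2p^{-2\pp})$ corner-incident edges first and conditioning, folding the lower-order correction into the $\epsilon$-budget. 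The ``jump'' term is handled separately: a jump across $k\ge2$ new columns requires a connected internally solved subset of $R'\setminus R$ spanning those columns, and the sum of the corresponding probabilities — bounded through Lemma~\ref{seed-theta2} for thin blobs and through a column-count estimate as in Lemma~\ref{column-condition} for thicker ones, over shapes, sizes and positions — is dominated by the main bound; blobs that merge within $R'\setminus R$ without producing a jump are harmless, as they merely feed the $\theta=2$ cascade and do not reduce the number of columns or rows that still need a seed.
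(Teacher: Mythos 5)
Your main-term computation coincides with the paper's: each new column must contain a vertex with $\sigma$ people-links outside the column (or a double connection), the per-vertex probability is $(1+o(1))\frac{(xy)^\sigma}{\sigma!}\,p^{\sigma/(2\sigma+1)}$, and multiplying over the $\approx yp^{-\sigma/(2\sigma+1)}$ candidates in a column gives $1-e^{-x^\sigma y^{\sigma+1}/\sigma!}$, i.e.\ the factor $e^{-g(x^\sigma y^{\sigma+1}/\sigma!)}$ per new column. But the step you yourself flag as delicate --- multiplying the column events against the row events --- is a genuine gap, and neither of your proposed fixes closes it. First, the overlap between $S_j$ and $S'_i$ is not confined to edges incident to the corner block: a seed in a new column $j$ may draw its $\sigma$ links from already-absorbed \emph{new rows}, i.e.\ from the full strips $[0,j-1]\times[Y,Y'-1]$, and a seed in a new row may likewise draw links from the new columns; the shared edges therefore run between each new column and entire side strips of size $\asymp \delta\, p^{-2\sigma/(2\sigma+1)}\cdot(x\vee y)/\delta$, not just the $\cO(\delta^2p^{-2\sigma/(2\sigma+1)})$ corner, so conditioning on corner-incident edges does not decouple anything. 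Second, the reordering fix is invalid: it is false that on $D(R,R')$ one may absorb all new columns before any new row --- the first new column may be absorbable only \emph{because} a new row was absorbed first and supplies some of the required links --- so the inclusion of $D(R,R')$ into the intersection of your restricted events $\tilde S_j$, $\tilde S'_i$ fails.

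The paper resolves exactly this by replacing your dynamic, order-dependent seed with the static criterion of Lemma~\ref{column-condition} (no column or row of $R'$ may be isolated, which is order-free and so also makes your ``jumps'' moot), and then splitting each column's witness into \emph{successful} --- all $\sigma$ links land in the old rectangle $R$, so that these events are genuinely independent across all new columns and rows, being determined by edges from distinct vertices into the fixed set $R$ --- versus \emph{exceptional} --- a double connection, or at least one of the $\sigma$ links lands in the annulus $R'\setminus R$. An exceptional vertex costs an extra factor of order $\delta$ relative to a successful one, so a disjoint-occurrence/BK count shows that having $\epsilon\delta_y p^{-\sigma/(2\sigma+1)}$ of them has probability $(C\delta/\epsilon)^{c\epsilon\delta_yp^{-\sigma/(2\sigma+1)}}$, negligible against the main term once $\delta$ is small; the main term is then extracted from the independent successful-vertex events alone. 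Two further omissions: when $\delta_x\le\epsilon\delta_y$ the error bound in the thin direction, of order $\exp(-M\delta_xp^{-\sigma/(2\sigma+1)})$, cannot beat a main term of order $\exp(-C\delta_yp^{-\sigma/(2\sigma+1)})$, so a separate argument is required (the paper's Case~2, with ``horizontally'' exceptional and successful vertices); and the lemma allows $R$ in general position inside $R'$, so the region to be crossed is a full annulus rather than your L-shape.
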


\begin{proof}
Divide $R'\setminus R$ into eight disjoint rectangles $S_1,\ldots, S_8$ 
as in Figure~\ref{r1r8}.

\begin{figure}
\begin{center}
\includegraphics[width=.4\textwidth]{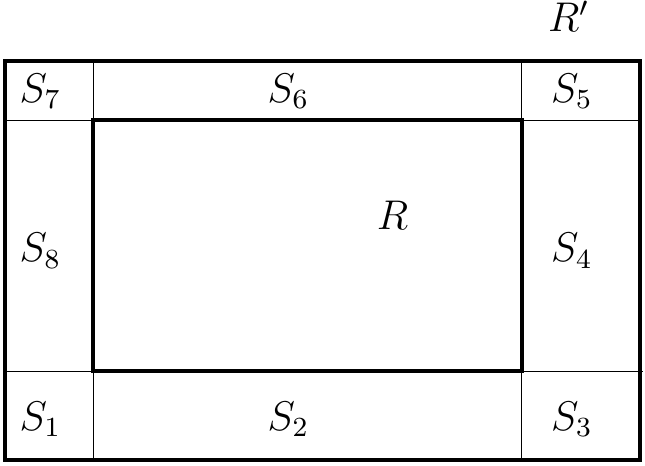}
\end{center}
\caption{The rectangles $S_1,\ldots, S_8$ in the proof of Lemma~\ref{D-theta2}.}
\label{r1r8}
\end{figure}

\renewcommand{\pp}{\sigma/(2\sigma+1)}

Let $S_h=S_1\cup S_2\cup S_3\cup S_5\cup S_6 \cup S_7$, $S_v=S_7\cup S_8\cup S_1\cup S_3\cup S_4 \cup S_5$, and $S_c=S_1\cup S_3 \cup S_5\cup S_7$. 
Call a vertex $v\in R'\setminus R$ (resp.~$v\in S_v$) {\it exceptional\/}
(resp.~{\it horizontally exceptional\/}) if it is either 
doubly connected to another vertex in $R'$, or it has both 
$\cpeople(v, R')\ge \sigma$ 
and $\cpeople(v, S_h\cup S_v)\ge 1$ (resp.~$\cpeople(v, S_h)\ge 1$). Moreover, 
declare $v$ {\it successful\/} (resp.~{\it horizontally successful\/}) if $\cpeople(v, R)\ge \sigma$
(resp.~$\cpeople(v, R'\setminus S_h)\ge \sigma$). 

Without loss of generality, we may assume $\delta_x\le \delta_y$. We divide our argument 
into two cases. 

\noindent{\it Case 1\/}: $\delta_x\ge \epsilon \delta_y$. 

Define the following events 
\begin{equation}\label{D-theta2-events}
\begin{aligned}
&G_1=\{\text{at least $\epsilon\delta_y p^{-\pp}$ vertices in $S_h$ are exceptional}\},\\
&G_2=\{\text{at least $\epsilon\delta_xp^{-\pp}$ vertices in $S_v$ are exceptional}\},\\
&G_3=\{\text{at least $\epsilon(\delta_x\wedge \delta_y) p^{-\pp}$ vertices in $S_c$ 
are successful}.\}\\
&G_4=\{\text{at least $(1-3\epsilon)\delta_y p^{-\pp}$ rows in $S_2\cup S_6$ 
contain a successful vertex}.\}\\
&G_5=\{\text{at least $(1-3\epsilon)\delta_x p^{-\pp}$ columns in $S_4\cup S_8$ 
contain a successful vertex}.\}
\end{aligned}
\end{equation}
By Lemma~\ref{column-condition}, 
\begin{equation}\label{D-theta2-inclusion}
\begin{aligned}
D(R,R')\subset G_1\cup G_2 \cup G_3 \cup (G_4\cap G_5).
\end{aligned}
\end{equation}
From now on, $C$ will be a generic constant that depends on $a$, $b$, and $\sigma$.
We have, for any vertex~$v$,
$$
\probsub{v\text{ is exceptional}}{p}\le C\delta p^{\pp}.
$$
As in the proof of Lemma~\ref{seed-theta2}, on $G_1$ there must exist 
${\frac 1{\sigma+1}\epsilon\delta_y p^{-\pp}}$ vertices that are exceptional 
disjointly, and analogous statement holds for $G_2$. Therefore, by Lemma~\ref{binomial-easy-lem}, 
\begin{equation}\label{D-theta2-eq1}
\begin{aligned}
&\probsub{G_1}{p}\le (C\delta/\epsilon)^{\frac 1{\sigma+1}\epsilon\delta_y p^{-\pp}},\\
&\probsub{G_2}{p}\le (C\delta/\epsilon)^{\frac 1{\sigma+1}\epsilon\delta_x p^{-\pp}}. 
\end{aligned}
\end{equation}
Moreover, for $p$ small enough, 
\begin{equation}\label{exceptional}
\psucc=\probsub{v\text{ successful}}{p}\le (1+\epsilon) \frac 1{\sigma!} (xy)^\sigma p^{\pp}
\le C p^{\pp},
\end{equation}
so that, by Lemma~\ref{binomial-easy-lem}, as the points in $S_c$ are successful independently,
\begin{equation}\label{D-theta2-eq2}
\begin{aligned}
&\probsub{G_3}{p}\le (C\delta/\epsilon)^{\epsilon\delta_x p^{-\pp}}
\end{aligned}
\end{equation}
Now, as points in $S_2\cup S_4\cup S_6\cup S_8$ are also successful independently, 
$G_4$ and $G_5$ are independent. To estimate $\probsub{G_4}{p}$, we see 
that the number of choices of the required number of rows that contain a 
successful vertex is bounded above by 
$\exp(C\epsilon\log \frac 3\epsilon\delta_yp^{-\pp})$, which we will, for
simplicity, bound by  $\exp(C\sqrt{\epsilon}\delta_yp^{-\pp})$.
Moreover, 
for $p$ small enough, by (\ref{exceptional}),
\begin{equation}\label{D-theta2-eq3}
\begin{aligned}
\probsub{G_4}{p}&\le
\exp(C\sqrt{\epsilon}\delta_yp^{-\pp})\left(1-(1-\psucc)^{xp^{-\pp}}\right)^{(1-3\epsilon)\delta_yp^{-\pp}}
\\
&\le \exp\left[(C\sqrt{\epsilon}\delta_yp^{-\pp}
 -g\left((1+\epsilon)\psucc xp^{-\pp}\right)(1-3\epsilon)\delta_yp^{-\pp}\right]\\
&\le 
\exp\left[(C\sqrt{\epsilon}\delta_yp^{-\pp} -g\left((1+\epsilon)^2 \textstyle{\frac 1{\sigma!}} x^{\sigma+1}y^\sigma\right)(1-3\epsilon)\delta_yp^{-\pp} \right]\\
&\le 
\exp\left[ -(1-C\sqrt\epsilon) 
g\left(\textstyle{\frac 1{\sigma!}}x^{\sigma+1}y^\sigma\right)\delta_yp^{-\pp}
 \right].
\end{aligned}
\end{equation}
and 
\begin{equation}\label{D-theta2-eq4}
\begin{aligned}
\probsub{G_5}{p} \le 
\exp\left[ -(1-C\sqrt\epsilon) 
g\left(\textstyle{\frac 1{\sigma!}}x^{\sigma}y^{\sigma+1}\right)\delta_xp^{-\pp}
 \right].
\end{aligned}
\end{equation}
Let $\beta$ be the upper bound on $\probsub{G_4\cap G_5}{p}=\probsub{G_4}{p}\probsub{G_5}{p}$
obtained by (\ref{D-theta2-eq3}) and (\ref{D-theta2-eq4}). 
Now we claim that 
$\probsub{G_1}{p}$, $\probsub{G_2}{p}$, and 
$\probsub{G_3}{p}$ are, for small enough 
$\delta$, all smaller than $\beta$. It is here that we use 
the {\it Case 1\/} assumption. For example, for arbitrary large $M>0$, 
$\delta$ can be chosen small enough so that 
\begin{equation*}
\begin{aligned}
&\probsub{G_2}{p}\le (C\delta/\epsilon)^{\frac 1{\sigma+1}\epsilon^2\delta_y p^{-\pp}}
\le \exp(-M\delta_yp^{-\pp}), 
\end{aligned}
\end{equation*}
while 
$$
\beta\ge \exp(-C\delta_yp^{-\pp}).
$$
Therefore, for small enough $\delta$, 
$\probsub{D(R,R')}{p}\le 4\beta$, 
which finishes the proof in this case.

\noindent{\it Case 2\/}: $\delta_x\le \epsilon\delta_y$. 

In this case it is enough to show 
\begin{equation}\label{D-theta2-eq5}
\begin{aligned}
&\probsub{D(R,R')}{p}\le\exp\left[ -(1-C\sqrt\epsilon) 
g\left(\textstyle{\frac 1{\sigma!}}x^{\sigma+1}y^\sigma\right)\delta_yp^{-\pp}
 \right]
\end{aligned}
\end{equation}
as, for $x,y\in [a,b]$, 
\begin{equation*}
\begin{aligned}
g\left(\textstyle{\frac 1{\sigma!}}x^{\sigma}y^{\sigma+1}\right)\delta_x
\le C\epsilon g\left(\textstyle{\frac 1{\sigma!}}x^{\sigma+1}y^{\sigma}\right)\delta_y.
 \end{aligned}
\end{equation*}
To demonstrate (\ref{D-theta2-eq5}), introduce the following two events 
\begin{equation*}
\begin{aligned}
&G_6=\{\text{at least $\epsilon\delta_y p^{-\pp}$ vertices in $S_h$ are horizontally exceptional}\},\\
&G_7=\{\text{at least $(1-\epsilon)\delta_y p^{-\pp}$ rows in $S_h$ 
contain a horizontally successful vertex}\}.\\
\end{aligned}
\end{equation*}
Now $P(D(R,R'))\le P(G_6)+P(G_7)$ and the rest of the proof is similar as in {\it Case 1\/}.
\end{proof}

\begin{proof}[Proof of Theorem~\ref{intro-theta2-thm}]
The upper bound on $p_c$ follows from Lemma~\ref{theta2-supercr}. 
The proof of the lower bound, at this point,
follows rather closely the argument in Sections 6--10 in \cite{Hol}
and we merely identify the key steps. 
The functional $w$ on paths $\gamma$ is now given by 
$$
w(\gamma)=\int_\gamma \left(g\left(\frac{x^{\sigma} y^{\sigma+1}}{\sigma!}\right)\,dx+g\left(\frac{x^{\sigma+1}y^{\sigma}}{\sigma!}\right)\,dy\right),
$$ 
and analogous variational principles as in Section 6 of \cite{Hol} hold. 
The disjoint spanning properties and hierarchies also have analogous 
formulations, and then the argument in Section 10 of \cite{Hol} goes 
through by the use of key Lemmas~\ref{seed-theta2} and \ref{D-theta2}. 
\end{proof}
 

\section{Two-dimensional torus puzzle: $\tau=2$}

Here we assume the two-dimensional torus with $\tau=2$. We will assume 
that
$\theta\ge 2$ and $\sigma\ge 1$ are arbitrary and
show that the asymptotic scaling of the critical probability is always 
$1/\log n$, proving Theorem~\ref{intro-tau2-theorem}.
We begin with the local result.

\begin{lemma}\label{tau2-local} Assume that $\tau=2$, $\theta=\infty$ 
and $\sigma\ge 1$.
Then
$$
\liminf_{p\to 0} p\log 
\probsub{\grow}{p}\ge-\frac{\pi^2}3+\int_0^\infty\log 
\prob{\Poisson(x)\ge \sigma}\, dx.
$$
\end{lemma}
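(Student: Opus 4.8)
The plan is to bound $\probsub{\grow}{p}$ from below by the probability of an explicit growth scenario in which the cluster of the origin swallows the nested squares $[0,k]^2\subset\bZ_+^2$ one $L$-shaped layer at a time. Fix $K=K(\sigma)$ with $(K+1)^2>2\sigma$, and require first that every $\Gpuz$-edge inside $[0,K]^2$ be a $\Gppl$-edge: this has probability $p^{O_\sigma(1)}$ and makes $[0,K]^2$ one doubly connected cluster containing the origin. For each $k\ge K$, assuming $[0,k]^2$ is in the cluster, require of the new horizontal strip $H_k=[0,k]\times\{k+1\}$: (i) some vertex of $H_k$ is doubly connected to the vertex directly beneath it (a \emph{seed}), and (ii) every vertex of $H_k$ has at least $\sigma$ $\Gppl$-neighbours in $[0,k]^2$; require the analogous two conditions for the new vertical strip $\{k+1\}\times[0,k]$ relative to $[0,k]\times[0,k+1]$; and require that the corner $(k+1,k+1)$ have at least $\sigma$ $\Gppl$-neighbours in $[0,k+1]^2\setminus\{(k+1,k+1)\}$. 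A short induction shows that on the intersection of all these events the local dynamics fills $\bZ_+^2$: the seed joins by (J1); each further strip vertex then has two $\Gpuz$-neighbours in the cluster (its downward neighbour and an already-added strip neighbour) and, by (ii), at least $\sigma$ $\Gppl$-neighbours there, so it is adjoined by (J3) since $\tau=2$; the corner is adjoined last the same way. Each of these events is increasing in the $\Gppl$-configuration, and so is $\grow$, so by the FKG inequality
$$
\probsub{\grow}{p}\ \ge\ p^{O_\sigma(1)}\prod_{k\ge K}\bigl(1-(1-p)^{k+1}\bigr)^{2}\,\prob{\Bin(k^2,p)\ge\sigma}^{2k+3},
$$
where the seed factors are computed directly and all relevant target sets have size at least $k^2$ (there are $2k+3$ vertices per layer subject to a $\sigma$-neighbour condition).

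For the seed factors, $1-(1-p)^{k+1}\ge 1-e^{-p(k+1)}$ and $x\mapsto\log(1-e^{-px})$ is increasing, so $\sum_{k\ge K}2\log(1-e^{-p(k+1)})\ge 2\int_0^\infty\log(1-e^{-px})\,dx=-\pi^2/(3p)$, using $\int_0^\infty\log(1-e^{-u})\,du=-\pi^2/6$. For the binomial factors, write $g_\sigma(x)=-\log\prob{\Poisson(x)\ge\sigma}$ as in Lemma~\ref{g-properties} and recall $\lambda_\sigma=\int_0^\infty g_\sigma$. Poisson approximation \cite{BHJ} gives $\prob{\Bin(k^2,p)\ge\sigma}\ge(1-\epsilon)e^{-g_\sigma(k^2p)}$ for small $p$ whenever $k^2p$ stays bounded, and the $(1-\epsilon)$ factors, raised to the $O(p^{-1/2})$ relevant values of $k$, cost only $e^{o(1/p)}$. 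It then remains to check the analytic estimate
$$
\limsup_{p\to0}\ p\sum_{k\ge K}(2k+3)\,g_\sigma(k^2p)\ \le\ \lambda_\sigma,
$$
which I would prove via the substitution $s=k^2p$: on any fixed window $s\in[\delta,\delta^{-1}]$ the sum is a genuine Riemann sum tending to $\int_\delta^{\delta^{-1}}g_\sigma$; the $k^2p<\delta$ part is $O(\sigma\delta\log(1/\delta))$ since $g_\sigma(x)=\sigma\log(1/x)+O(1)$ near $0$; the $k^2p>\delta^{-1}$ part is $O(\int_{\delta^{-1}}^\infty x^{\sigma-1}e^{-x}\,dx)$ since $g_\sigma(x)=O(x^{\sigma-1}e^{-x})$ for large $x$; both vanish as $\delta\to0$, and the lower-order term $3\sum_k g_\sigma(k^2p)=O(p^{-1/2})$ is negligible against $p^{-1}$.

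Combining, $p\log\probsub{\grow}{p}\ge p\,O_\sigma(\log p)-\pi^2/3-\lambda_\sigma+o(1)$, so $\liminf_{p\to0}p\log\probsub{\grow}{p}\ge-\pi^2/3-\lambda_\sigma=-\pi^2/3+\int_0^\infty\log\prob{\Poisson(x)\ge\sigma}\,dx$, which is the claim. The main obstacle is twofold: first, selecting a scenario that is at once \emph{increasing} (so FKG obviates any tracking of disjointness of queried edges), \emph{deterministically sufficient} for $\grow$ under $\tau=2$, $\theta=\infty$ (the strip-by-strip induction and the corner step), and \emph{lossless} at leading order (no superfluous requirement that would inflate the exponent); second, the calculus that turns the binomial product into $\exp(-(\pi^2/3+\lambda_\sigma)/p+o(1/p))$, where the $\pi^2/3$ drops out of a clean monotonicity bound but the $\lambda_\sigma$ needs the three-regime Riemann-sum estimate above — this mirrors, and is lighter than, the analogous computation in the proof of Lemma~\ref{bp-local}.
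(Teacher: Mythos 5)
Your argument is correct and reaches the right constant, but it allocates the cost of the critical droplet quite differently from the paper. The paper's scenario builds a nucleus on the scale $p^{-1/2}$: it pays $p^{2J}=e^{o(1/p)}$ to doubly connect two axis segments of length $J\approx\sqrt{b/p}$, then extracts the entire $\lambda_\sigma=\int_0^\infty g_\sigma$ term from filling the square $[0,J]^2$ in a diagonal sweep (each new point uses $\tau=2$ via its left and down neighbours and pays $g_\sigma(mp)$ for a target set of size $m$ growing from $O(J)$ to $J^2$); the subsequent layers $k>J$ then contribute only $\pi^2/3$ (one doubly connected seed per strip, exactly as in your $G_3$-analogue) plus an $\epsilon/p$ remainder, because for $k>J$ the squares are so large that the $\sigma$-neighbour conditions are nearly free. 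You instead start from an $O_\sigma(1)$ nucleus and grow layer by layer from the outset, so your $\lambda_\sigma$ arises as the limit of $p\sum_k 2k\,g_\sigma(k^2p)$ via the substitution $s=k^2p$ — a genuinely different (and rather elegant) way of realizing the same variational cost; correspondingly you lean on FKG for all the intralayer correlations, where the paper arranges most of its events to be independent and uses FKG only once. The one place you should be more careful is the claim $\prob{\Bin(k^2,p)\ge\sigma}\ge(1-\epsilon)e^{-g_\sigma(k^2p)}$: additive Poisson approximation gives an error of order $k^2p^2$, which is \emph{not} negligible relative to $e^{-g_\sigma(k^2p)}\asymp(k^2p)^\sigma$ when $k^2p$ is small and $\sigma\ge 3$; in that regime you should instead use a direct binomial lower bound such as $\binom{k^2}{\sigma}p^\sigma(1-p)^{k^2}$ (or the splitting trick $\prob{\Bin(\lfloor k^2/\sigma\rfloor,p)\ge1}^\sigma$ the paper uses), which is harmless since that regime contributes only $O(\delta\log(1/\delta))/p$ anyway. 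With that repair your three-regime Riemann-sum estimate goes through and the proof is complete.
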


\begin{proof} Fix $\epsilon>0$. For a $b>0$ (which will depend on 
$\e$), let $J=\lceil (b/p)^{1/2}\rceil$.
Let $G_1$ be the event that the pairs of points $\{(0,j-1), (0,j)\}$
and $\{(j-1, 0), (j,0)\}$, $1\le j\le J$ are all
doubly connected.

Order the points in $\bZ_+^2$ as in the proof of Corollary~\ref{Cor:Z2 long range}: $(x_1, y_1) < (x_2, y_2)$ if either $x_1 + y_1 < x_2 + 
y_2$ or
$x_1 + y_1 = x_2 + y_2$ and
$x_1< x_2$. Let $G_2$ be the event that every point $(x,y)\in 
(0,J]^2$ has at
least $\sigma$ $\Gppl$-neighbors within $([0,J]\times\{0\})\cup 
(\{0\}\times [0,J])\cup \overleftarrow{(x,y)}$. Here, 
$\overleftarrow{(x,y)}$ is the set of points that 
{\it strictly\/} precede $(x,y)$
in the ordering.

As in the proof of Lemma~\ref{bp-local}, let $B_k^h=[0,k-1]\times 
\{k\}$ and
$B_k^v=\{k\}\times [0,k-1]$. Let $G_3$ be the event that, for every $k> 
J$,
there are points $z_k\in B_k^h$ and $z_k'\in B_k^v$, each of which
is doubly connected to a point in $[0,k-1]^2$,  and let $G_4$ be the 
event that,
for every $k>J$, each point in
$B_k^h\cup B_k^v\cup\{(k,k)\}$ is $\Gppl$-connected to at least 
$\sigma$ points in
  $[0,k-1]^2$.

By the FKG inequality, $P(G_3\cap G_4)\ge P(G_3)P(G_4)$, while $G_1$, 
$G_2$, and
$G_3\cap G_4$ are independent. It is easy to see that
$G_1\cap G_2 \cap G_3\cap G_4\subset \grow$. Therefore,
\begin{equation}\label{tau2-local-eq1}
\probsub{\grow}{p} \ge 
\probsub{G_1}{p}\probsub{G_2}{p}\probsub{G_3}{p}\probsub{G_4}{p},
\end{equation}
and we estimate each factor separately.
Clearly
\begin{equation}\label{tau2-local-eq2}
\probsub{G_1}{p}=p^{2J},
\end{equation}
and, by the same estimates as in (\ref{G2-est}) and (\ref{G3-est}),
\begin{equation}\label{tau2-local-eq3}
\probsub{G_2}{p}\ge \exp\left(-2\epsilon p^{-1} + p^{-1}\int_0^\infty\log 
\prob{\Poisson(x)\ge \sigma}\, dx\right),
\end{equation}
for small enough $p$.
Further,
\begin{equation}\label{tau2-local-eq4}
\begin{aligned}
\probsub{G_3}{p}&=\prod_{k=J+1}^\infty(1-(1-p)^k)^2
&\ge \prod_{k=1}^\infty(1-(1-p)^k)^2
&\ge \exp\left(-p^{-1}\cdot\frac{\pi^2}{3}\right),
\end{aligned}
\end{equation}
by the standard calculation, and
\begin{equation}\label{tau2-local-eq5}
\begin{aligned}
\probsub{G_4}{p}
&=\prod_{k=J+1}^\infty \prob{\Bin((k-1)^2, p)\ge \sigma}^{2k+1}\\
&\ge\prod_{k=J+1}^\infty
\prob{\Bin(\lfloor(k-1)^2/\sigma\rfloor, p)\ge 1}^{(2k+1)\sigma}\\
&\ge
\exp\left(3\sigma\sum_{k=J+1}^\infty 
k\log(1-e^{-0.5\sigma^{-1}pk^2})\right)\\
&=
\exp\left(3\sigma {p}^{-1}
\sum_{k=J+1}^\infty k\sqrt p\cdot \log(1-e^{-0.5\sigma^{-1}(k\sqrt 
p)^2})\sqrt p\right)\\
&\ge
\exp\left(3\sigma { p}^{-1}\int_{b}^\infty 
x\log(1-e^{-0.5\sigma^{-1}x^2})\, dx\right)\\
&\ge
\exp\left(-\epsilon p^{-1} \right),
\end{aligned}
\end{equation}
for small enough $p$ and large enough enough $b$.
The result now follows from 
(\ref{tau2-local-eq1})--(\ref{tau2-local-eq5}).
\end{proof}

\begin{proof}[Proof of Theorem~\ref{intro-tau2-theorem}]
We first consider the parameter choice $\theta=2$, which makes the 
growth easiest.
The resulting analysis is
also the easiest, as the dynamics is a slight
variant of the modified bootstrap percolation \cite{Hol}. Namely, it is equivalent to the 
following edge-growth process. Initially,  the edges
that connect doubly connected vertices are occupied. Then one 
simply ``completes the squares,'' i.e., when two $\Gpuz$-edges $\{v,v_1\}$,
$\{v,v_2\}$ adjacent to the vertex $v$ are occupied, there exists a
unique $v'$ so that the two
$\Gpuz$-edges $\{v',v_1\}$, $\{v',v_2\}$ are adjacent and each (if not
already occupied) becomes occupied at the next time.  
By following the argument from \cite{Hol}, (\ref{intro-tau2-theta2}) follows.

Therefore, for any $\epsilon>0$, and
$p\le (1-\epsilon)(\pi^2/6)/\log n$,
$\probsub{\solve}{p}\to 0$
in all cases, proving the lower bound in (\ref{intro-tau2-general}).
The upper bound in (\ref{intro-tau2-general}) follows immediately from 
Lemma~\ref{tau2-local}.
\end{proof}

 
\section{Two-dimensional torus puzzle: computational aspects}

For concreteness, we assume the AE dynamics throughout this section.
This is more challenging to simulate than the basic jigsaw percolation
\cite{BCDS} as a cluster cannot be ``collapsed'' into a vertex.
For large two-dimensional tori $\bZ_n^2$, the simulations seem daunting at first, as
generation of $\Gppl$ alone involves $n^2(n^2-1)/2$ coin flips. However,
as we will see, only a small proportion of these flips is ever likely to
be used, leading to a significant reduction in computational requirements.
The key idea is that the status of edges of $\Gppl$ can be
determined dynamically as needed, rather than at the beginning.

We begin by describing an implementation of the dynamics.
We recall that the state at time $t$
is a partition $\cP^t=\{W_i^t: i=1,\dots, I_t\}$ of $\bZ_n^2$ into
disjoint nonempty sets that are internally solved; in particular, they are
connected clusters in both graphs. One may use an appropriate
pointer-based data structure which makes {\tt Union} and {\tt Find} operations efficient,
for example shallow (threaded) trees \cite{Sed}. We change the terminology
slightly in that we consider $\Gppl$ a random configuration on the set of complete
graph edges in which each edge is independently {\it open\/} with
probability $p$ and {\it closed\/} otherwise. At any time $t=0, 1,\ldots$ one
performs the following two operations:
\begin{itemize}
\item[(1)] For any point $z\in W_i^t$ that has a $\Gpuz$-neighbor in a cluster
$W_j^t$,  $j\ne i$, check the status of the $\Gppl$--edges between $z$ and
points $z'\in W_j^t$ in some order; stop when an open edge is encountered or when 
all points in $W_j^t$ are exhausted.
In the former case say that $z$ {\it communicates\/} with $z'$.
\item[(2)] Repeatedly merge any two sets in the partition if a point in one
communicates with a point in another, until no more merges are possible. The resulting
partition is $\cP^{t+1}$.
\end{itemize}

Whenever a status of $\Gppl$-edge in step (1) is checked,
we say that an {\it oriented\/} pair $z\to z'$ is \df{examined} at time $t$. Observe that
$z\to z'$ and $z'\to z$ may be examined at the same time $t$. Observe also that
if a pair $z\to z'$ is examined at time $t-1$, and $z\to z'$ or $z'\to z$ is again examined
at time $t$, then $\{z,z'\}$ is necessarily a closed edge
in $\Gppl$. One may arrange the algorithm so that no edge is examined twice. 
However, in a practical implementation, it is easiest to store 
the set of edges $(z,z')$ of $\Gppl$, such that
either $z\to z'$ or $z'\to z$, in a convenient data structure
(say, a binary search tree or a hash table \cite{Sed}). These are the edges whose status has been
decided.

\begin{theorem} Fix any sequence of probabilities $p$.
With probability converging to 1 as $n\to\infty$, for every vertex $z\in V$,
at most $1000(\log n)^2$ oriented pairs $z\to z'$ are ever examined.
Consequently, the space and time
requirements for deciding whether the puzzle is solved are
both a.~a.~s.~bounded by $Cn^2(\log n)^2$, for some absolute constant $C$.
\end{theorem}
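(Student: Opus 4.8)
The plan is to bound, for each vertex $z$, the number of oriented pairs $z\to z'$ that the algorithm ever examines; the space and time bounds then follow at once. First I would reduce to a local estimate. An oriented pair $z\to z'$ is examined only when $z$ scans, in step~(1), a cluster containing $z'$, and $z$ scans a cluster only if it contains one of $z$'s (at most four) $\Gpuz$-neighbors $y_1,\dots,y_4$. So, letting $N_{z,y}$ denote the number of distinct vertices $u$ that lie in the cluster of $y$ at some time and whose edge $\{z,u\}$ is ever checked by $z$, the number of oriented pairs $z\to z'$ examined is at most $N_{z,y_1}+\dots+N_{z,y_4}\le 4\max_i N_{z,y_i}$. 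It therefore suffices to show that a.~a.~s. $N_{z,y}<60(\log n)^2$ for every $z$ and every $\Gpuz$-neighbor $y$ of $z$, since then each vertex examines at most $240(\log n)^2<1000(\log n)^2$ oriented pairs.

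The key point is a stochastic bound on $N_{z,y}$. As soon as $z$ checks an \emph{open} edge to a vertex of a cluster containing $y$, that cluster is merged into $z$'s cluster in step~(2); thereafter $y$ lies in $z$'s cluster and $z$ never again scans a cluster containing $y$. Hence the successive $\Gppl$-edges that $z$ checks into the increasing family of clusters containing $y$ (the ``$y$-chain'') form a sequence whose statuses, when the edges are revealed only as they are inspected, are independent $\mathrm{Bernoulli}(p)$: each such edge is inspected only if all earlier ones were closed, since an open one ends $z$'s involvement with the $y$-chain. Consequently $\prob{N_{z,y}\ge m}\le (1-p)^{m-1}\le e^{-p(m-1)}$ for every $m$. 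I would also record the trivial bound that $N_{z,y}$ cannot exceed the size of the final cluster of $y$ (the $y$-chain is nested, so its union is that cluster, which is internally solved).

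The estimate is then completed by a dichotomy on $p$, taking $m=\lceil 60(\log n)^2\rceil$. If $p>0.0388/\log n$, then $e^{-p(m-1)}\le n^{-2.3}$ for large $n$, so a union bound over the at most $4n^2$ pairs $(z,y)$ gives $N_{z,y}<m$ for all of them, a.~a.~s. If instead $p\le 0.0388/\log n$, then, because the event ``there is an internally solved set of size at least $k$'' is increasing in $\Gppl$, monotonicity together with Lemma~\ref{lb-2d} (applied with $c=1.5116$, for which the infimum in that lemma exceeds $2$) shows that a.~a.~s. there is no internally solved set of size $\ge 1.5116\log n$; in particular every cluster, hence every $N_{z,y}$, stays below $1.5116\log n<m$. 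Either way, a.~a.~s. every vertex $z$ examines fewer than $1000(\log n)^2$ oriented pairs $z\to z'$.

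For the final assertion I would use the implementation that records which $\Gppl$-edges have had their status decided, so that no edge is examined twice; then the total number of edge checks over the whole run is a.~a.~s. at most $1000\,n^2(\log n)^2$, and this also bounds the number of stored edges and hence the space. The only remaining work consists of $O(n^2)$ $\mathtt{Union}$/$\mathtt{Find}$ operations on the partition data structure, which is of lower order, so the space and time are a.~a.~s. at most $Cn^2(\log n)^2$. I expect the one genuinely delicate step to be the independence claim behind $\prob{N_{z,y}\ge m}\le(1-p)^{m-1}$: one must verify carefully that, before $z$ discovers an open edge into the $y$-chain, the identity of the next vertex of that chain $z$ scans is unaffected by the statuses of the $z$-edges not yet inspected, which in turn relies on the fact that closed $\Gppl$-edges leave the subsequent dynamics unchanged. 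Everything else is routine.
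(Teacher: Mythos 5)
Your argument is essentially the paper's: you bound, for each $\Gpuz$-edge $\{z,y\}$, the number of $\Gppl$-edges from $z$ into the growing cluster of $y$ by a geometric variable (the paper phrases this as the event that $\{z,y\}$ remains \emph{active} while the opposite cluster has size $k$ having probability at most $(1-p)^k$), and you dispose of the regime $p\le 0.0388/\log n$ exactly as the paper does, via Lemma~\ref{lb-2d} and the absence of large internally solved sets. The one delicate point you flag --- that each inspected edge must still be fresh when its status is revealed, despite possible earlier examination in the reverse orientation --- is precisely the point the paper's proof also relies on implicitly, so your write-up is correct to the same standard as the original.
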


\begin{proof} We may assume that $p\ge 0.038/\log n$, as otherwise the
result follows from Lemma~\ref{lb-2d} and Theorem~\ref{2d-bounds} (in fact,
with $\log n$ instead of $(\log n)^2$).

Fix an edge $\{z_1,z_2\}\in \Epuzzle$. Call this edge
{\it active\/} at time $t$ if $z_1$ and $z_2$ belong to different clusters at time $t$.
Observe that no pair $z\to z'$ is checked at any time $t'\ge t$ if none of the
for $\Gpuz$-edges incident to $z$ are active at time $t$. Furthermore, if $W_1^t, W_2^t\in \cP^t$
are the clusters that contain $z_1,z_2$, respectively, then for $t\ge 0$,
$$
\{\{z_1, z_2\}\text{ active at time $t+1$}\}\subset\{\{z_1, z_2'\}\notin \Epeople,
\{z_2, z_1'\}\notin \Epeople,\text{ for all }z_1'\in W_1^t, z_2'\in W_2^t\},
$$
as the status of $\Gppl$-edges $\{z_1, z_2'\}$ is checked when $z_2'$ joins the cluster
containing $z_2$. It follows that
$$
\probsub{\{z_1, z_2\}\text{ is active at time $t+1$}, |W_1^t\cup W_2^t|\ge k}{p}
\le (1-p)^k,
$$
and then
$$
\probsub{\{z_1,z_2\}\text{ is active at time $t+1$}, |W_2^t|\ge 240(\log n)^2}{p}
=o(n^{-4}).
$$
Let
$$
G_{z_1, z_2}=\{\text{more than
$240(\log n)^2$ pairs $z_1\to z_2'$, $z_2'\in \bigcup_{s\ge 0}W_2^s$ are examined}\}.
$$
Then, for any fixed time $t$,
$$
\probsub{G_{z_1, z_2}\cap\{|W_2^t|\ge 240(\log n)^2\}}{p}
=o(n^{-4}),
$$
and then by monotonicity of $|W_2^t|$,
$$
\probsub{G_{z_1, z_2}}{p}=\probsub{G_{z_1, z_2}\cap\left(\cup_{t\ge 0}\{|W_2^t|\ge 240(\log n)^2\}\right)}{p}
=o(n^{-4}).
$$
Thus for any fixed $z\in V$,
$$
\probsub{\text{more than
$1000(\log n)^2$ pairs $z\to z'$ are examined}}{p}
\le
\probsub{\bigcup_{z_2}G_{z, z_2}}{p}=o(n^{-4}),
$$
where the union is over four $z_2$ that are $\Gpuz$-neighbors of $z$.
This proves the first claim of the theorem with $C=1000$. The spatial and
temporal complexity bounds are then easy to deduce.
\end{proof}

\section*{Open problems}

We conclude with a list of open problems, some of which were mentioned in passing 
in the text, but most are introduced here.

\begin{mylist}
\item For the AE dynamics ($\tau=1$, $\theta=\infty$) on $\bZ_n^2$, can sharp transition be proved? Can one devise
a sequence of approximations (in principle 
computable in finite time) that provably converges to $p_c$?  
\item
For the dynamics with $\tau=1$ and $\theta=\infty$ on $\bZ_n^2$,
can one find a lower and an upper bound for $p_c$ of the form, respectively, 
$c_\sigma^\ell/\log n$ and $c_\sigma^u/\log n$, such that $c_\sigma^\ell\sim c_\sigma^u$
as $\sigma\to\infty$?
\item  When $\Gpuz$ is a regular tree with AE dynamics, 
can one show that  $\lim_{p\to 0} p\log \probsub{\grow}{p}$ exists 
and compute it?
\item To which precision can one estimate $p_c$ for AE dynamics on the hypercube 
or Hamming torus? (See Theorems~\ref{intro-lb-general} and~\ref{intro-ub-list}, and
Section 4 for the scaling results.)
\item How fast is the convergence to $\lambda_c$ in Theorems~\ref{intro-ring-thm} 
and~\ref{intro-theta2-thm}? (Bootstrap percolation 
is analyzed from this perspective in \cite{GHM}.)
\item Can the bounds in (\ref{intro-tau2-general}) be improved for $\sigma=1$? 
\item What is the scaling on three-dimensional torus with arbitrary $\sigma$,  
$\theta=2$ or $\theta=3$, and $\tau\le\theta$? Or on the $d$-dimensional torus, 
for general $d$? 
(Again, the answers are known for bootstrap percolation \cite{BBDM}.) 
\end{mylist}

\section*{Acknowledgements}
JG was partially supported by 
Republic of Slovenia's Ministry of Science program P1-285.  Thank you to Charlie Brummitt for his helpful comments on an earlier draft.

\end{document}